\documentclass[11pt, a4paper]{amsart}

\title{On distinct finite covers of $3$-manifolds}
\author[S.\ Friedl]{Stefan Friedl}
\address{Universit\"at Regensburg, Fakult\"at f\"ur Mathematik, Regensburg, Germany}
\email{sfriedl@gmail.com}
\urladdr{http://www.uni-regensburg.de/Fakultaeten/nat\_Fak\_I/friedl/}

\author[J.\ Park]{JungHwan Park}
\address{School of Mathematics, Georgia Institute of Technology, Atlanta, GA, USA}
\email{junghwan.park@math.gatech.edu  }
\urladdr{http://people.math.gatech.edu/~jpark929/}

\author[B.\ Petri]{Bram Petri}
\address{Institut de Math\'ematiques de Jussieu -- Paris Rive Gauche, Sorbonne Universit\'e, Paris, France}
\email{bram.petri@imj-prg.fr}
\urladdr{http://webusers.imj-prg.fr/~bram.petri/}

\author[J.\ Raimbault]{Jean Raimbault}
\address{Institut de Math\'ematiques de Toulouse, UMR 5219, Universit\'e Paul Sabatier--CNRS, 31062 Toulouse Cedex 9, France }
\email{Jean.Raimbault@math.univ-toulouse.fr}
\urladdr{https://www.math.univ-toulouse.fr/~jraimbau/}

\author[A.\ Ray]{Arunima Ray}
\address{Max-Planck-Institut f\"{u}r Mathematik, Bonn, Germany}
\email{aruray@mpim-bonn.mpg.de }
\urladdr{http://people.mpim-bonn.mpg.de/aruray/}

\date{\today}

\subjclass[2010]{57M10}

\usepackage{amsmath,amsfonts,amssymb,amsthm,graphicx,overpic,hyperref,pinlabel}
\usepackage{float,enumitem, xypic, tikz, tikz-cd}
\usepackage[toc,page]{appendix}
\usetikzlibrary{arrows,matrix,positioning}
\usepackage[font=small]{caption}

\newtheorem{theorem}{Theorem}[section]
\newtheorem{proposition}[theorem]{Proposition}

\newtheorem{lemma}[theorem]{Lemma}
\theoremstyle{definition}
\newtheorem{definition}[theorem]{Definition}

\newenvironment{prprep}[1]
  {\innerprprep}
  {\endinnerprprep}

\newtheorem*{maintheorem}{Main Theorem}

\usepackage[margin=1in]{geometry}
\setlength{\parskip}{0.1in plus0.05in minus0.025in}

\newcommand{\nc}{\newcommand}
\nc{\dmo}{\DeclareMathOperator}

\newcommand{\Z}{\mathbb{Z}}
\dmo{\Int}{Int}
\dmo{\coker}{Coker}
\nc{\abs}[1]{\left| #1 \right|}
\nc{\bigO}[1]{\mathcal{O}\left(#1\right)}
\nc{\card}[1]{\left|#1\right|}
\nc{\ceil}[1]{\left\lceil #1 \right\rceil}
\nc{\CC}{\mathbb{C}}
\nc{\floor}[1]{\left\lfloor #1 \right\rfloor}
\nc{\ZZ}{\mathbb{Z}}
\nc{\len}[1]{\left| #1 \right|}
\nc{\littleo}[1]{o\left(#1\right)}
\dmo{\Mat}{Mat}
\nc{\NN}{\mathbb{N}}
\nc{\norm}[1]{\left|\left| #1 \right|\right|}
\nc{\QQ}{\mathbb{Q}}
\nc{\RR}{\mathbb{R}}
\nc{\st}[2]{\left\{ #1 ;\; #2\right\}}
\dmo{\supp}{supp}
\nc{\tr}[1]{\mathrm{tr}\left(#1\right)}

\dmo{\area}{area}
\dmo{\conv}{conv}
\dmo{\diam}{diam}
\dmo{\dist}{\mathrm{d}}
\nc{\HH}{\mathbb{H}}
\dmo{\Isom}{Isom}
\dmo{\MCG}{MCG}
\dmo{\MPL}{MPL}
\dmo{\Mod}{\mathcal{M}}
\dmo{\PL}{PL}
\nc{\Sphere}{S}
\dmo{\sys}{sys}
\dmo{\Teich}{\mathcal{T}}
\nc{\Torus}{T}
\dmo{\vol}{vol}
\dmo{\WP}{WP}

\dmo{\convTV}{\;\stackrel{\mathrm{TV}}{\longrightarrow}\;}
\nc{\ExV}[2]{\mathbb{E}_{#1}\left[#2\right]}
\dmo{\EE}{\mathbb{E}}
\nc{\Pro}[2]{\mathbb{P}_{#1}\left[#2\right]}
\dmo{\PP}{\mathbb{P}}
\nc{\distTV}[2]{\mathrm{d}_{\rm TV}\left(#1,#2\right)}
\dmo{\UU}{\mathbb{U}}
\nc{\Var}[2]{\mathbb{V}\mathrm{ar}_{#1}\left[#2\right]}

\dmo{\alt}{\mathfrak{A}}
\dmo{\Aut}{Aut}
\dmo{\Fix}{Fix}
\dmo{\Hom}{Hom}
\dmo{\PSL}{PSL}
\dmo{\Rep}{Rep}
\dmo{\sym}{\mathfrak{S}}

\newcommand{\ccl}[1]{\overline{#1}^{\mathrm{cong}}}
\newcommand{\eps}{\varepsilon}
\newcommand{\FF}{\mathbb F}
\newcommand{\bs}{\backslash}
\newcommand{\GL}{\mathrm{GL}}
\newcommand{\OG}{\mathrm O}
\newcommand{\isom}{\mathrm{Isom}}
\newcommand{\id}{\mathrm{Id}}
\def\N{\mathbb{N}} 

\begin{document}

\begin{abstract}
  Every closed orientable surface $S$ has the following property: any two connected finite covers of \( S \) of the same degree are homeomorphic (as spaces). In this, paper we give a complete classification of compact $3$-manifolds with empty or toroidal boundary which have the above property. We also discuss related group-theoretic questions.
\end{abstract}

\maketitle


\section{Introduction}
Given a finitely generated group $G$ and $n\in \N$ 
we denote by $s_n(G)$  the number of subgroups of index $n$ and we denote by
$e_n(G)$ the number of \emph{isomorphism types} of subgroups of index $n$.

The study of  $s_n(G)$ has attracted a lot of interest over the years. For example, the well-known asymptote for the number of index \( n \) subgroups in a free group $F_2$ of rank 2 is
\[s_n(F_2) \sim n \cdot (n!) \quad \text{ as }\quad n\to\infty, \]
(see for instance \cite{Dixon} and \cite[Chapter 2]{Lubotzky_Segal}). Much more recently, similar asymptotes were determined for surface groups by M\"uller and Schlage-Puchta \cite{MP1}, Fuchsian groups by Liebeck and Shalev \cite{Liebeck_Shalev} and non-uniform lattices in higher rank simple Lie groups by Lubotzky and Nikolov \cite{Lubotzky_Nikolov}. Finer questions ask for the number of subgroups of a given type. For instance, in \cite{MP2}, M\"uller and Schlage-Puchta determine the statistics of given isomorphism types in free products and in \cite{Lubotzky,Goldfeld_Lubotzky_Pyber} Lubotzky--Goldfeld--Pyber and Lubotzky--Nikolov count the number of congruence subgroups in arithmetic groups. For an introduction to the subject of subgroup growth, we refer to the monograph by Lubotzky and Segal \cite{Lubotzky_Segal}.

In this paper we are interested in the growth of  the sequence $\{e_n(G)\}_{n\in \N}$. It is apparent that its behavior can be strikingly different from $\{s_n(G)\}_{n\in \N}$. For example, we have just seen above that the sequence $\{s_n(F_2)\}_{n\in \N}$ grows super exponentially whereas it is clear that the sequence $\{e_n(F_2)\}_{n\in \N}$ is constantly equal to one. Similarly it is evident that for any surface group~$G$ the sequence $\{e_n(G)\}_{n\in \N}$ is constantly equal to one. These and free abelian groups are the only examples we know of (infinite, residually finite) groups with $e_n \equiv 1$. To give an example where $e_n$ exhibits nontrivial growth let us describe its behaviour for Fuchsian groups (namely, discrete subgroup of $\PSL_2(\RR)$) containing torsion elements: we observe in Proposition \ref{fuchsian_distinct} below that for a  Fuchsian group $G \subset \mathrm{PSL}_2(\mathbb R)$ of finite covolume, $e_n(G)$ grows polynomially. For example, we obtain (using M\"uller and Schlage--Puchta's asymptotic mentioned above) that 
\[
e_n(\mathrm{PSL}_2(\mathbb Z)) \sim \frac {n^2} 6 \quad \text{ as }\quad n\to\infty. 
\]
We note that this asymptotic contains geometric information about the orbifold $\mathrm{PSL}_2(\ZZ) \bs \mathbb H^2$: in general, the exponent ($2$ in this case) equals the number of nontrivial divisors of the orders of its cone points. Moreover, in the case of $\mathrm{PSL}_2(\mathbb Z)$, the leading coefficient ($1/6$) appears as the inverse of the product these divisors. 

To give our topic a more topological flavour, given a compact, connected manifold $M$ let us denote by $e_n(M)$ the number of homeomorphism types of connected degree $d$ covering spaces of $M$. All the examples above are essentially 1- or 2-dimensional. For a topologist, after the case of surfaces has been settled it is natural to next consider the case of 3-manifolds. A more pragmatic reason for this is that if $M$ is a closed, orientable, aspherical $3$-manifold, then we have $e_n(M)=e_n(\pi_1(M))$. This follows from the geometrization theorem, the rigidity theorem~\cite{Mos}, and work of Waldhausen~\cite{Waldhausen} and Scott~\cite{Sco2} (see e.g.~\cite[Theorem~2.1.2]{AFW}). Thus in this case the group-theoretical and topological problems are equivalent. Another is that the current understanding of 3--manifold groups (as opposed to higher-dimensional aspherical manifolds) is good enough to let us actually prove some nontrivial results. 

For technical reasons it is often more convenient to study the following closely related sequence
\[
e_n'(M)\,:=\, \sup \{ e_k(M)\,|\, k\leq n\}.
\]
In Section~\ref{quant} we point out that it follows from  \cite{Ago}, \cite{Cooper_Long_Reid_VH}, \cite{BGLM} and  \cite[Section 5.2]{BGLS}  that $e_n'(M)$ grows as a power of a factorial  for hyperbolic 3-manifolds. On the other hand it is clear that $e_n'(M)$ is a constant sequence for some choices of $M$, e.g.\ for the 3-torus and for 3-manifolds with cyclic fundamental group. As hyperbolic manifolds are in many ways generic among 3--manifolds this leads us to the following definition. 

\begin{definition}
  A manifold $M$ is called \emph{exceptional} if $e_n'(M)=1$ for all $n\in \N$. In other words, $M$ is exceptional, if for all $n\in \N$ any two connected degree $n$ covers of $M$ are homeomorphic. 
\end{definition}

Note that there is an analogous notion of exceptional groups, that is, a group is said to be \emph{exceptional} if any two index $n$ subgroups are isomorphic. 

As a warm-up, in  Lemma~\ref{lem:exceptional-surfaces} we determine which compact 2-dimensional manifolds are exceptional (this makes for a surprisingly amusing exercise). In our main theorem we determine all exceptional  compact $3$-manifolds with empty or toroidal boundary. 

\begin{maintheorem}\emph{ Let $M$ be a compact $3$-manifold with empty or toroidal boundary. Then $M$ is exceptional if and only if it is homeomorphic to one of the following manifolds:
\begin{enumerate}[font=\normalfont]
\item $k \cdot S^1 \times S^2$ for $k\geq1$,
\item $S^1\widetilde{\times} S^2$,
\item $S^1\times D^2$, 
\item $T^2\times I$,
\item $T^3$,
\item all spherical manifolds except those with fundamental group $P_{48}\times \ZZ/p$ with $\gcd(p,3)=1$ and $p$ odd, or $Q_{8n}\times \ZZ/q$ with $\gcd(q,n)=1$, $q$ odd, and $n\geq 2$. \label{mainthm_spherical}
\end{enumerate}}
\end{maintheorem}

In the theorem above and henceforth, $S^n$ denotes the $n$-sphere, $D^n$ denotes the $n$-dimensional disk, $T^n$ denotes the $n$-torus, $I$ denotes the unit interval $[0,1]$, and $k\cdot M$ denotes the $k$-fold connected sum of the manifold $M$. The groups mentioned in item (\ref{mainthm_spherical}) are defined in Section \ref{sec_spherical}.

Some of the techniques we use in the proof of the main theorem apply in a a larger setting. For instance, as noted above, it follows from work of Agol et al that hyperbolic 3-manifolds are not exceptional. We also give a different proof of this fact that generalizes to  lattices in most semisimple Lie groups as follows. (Recall that two Lie groups are called \emph{locally isomorphic} if they have isomorphic Lie algebras.)

\begin{prprep}{\ref{lattices}}
  Let $\Gamma$ be an irreducible lattice in a semisimple  linear Lie group not locally isomorphic to $\mathrm{PGL}_2(\RR)$. Then $\Gamma$ is not exceptional. 
\end{prprep} 

Regarding $\mathrm{PGL}_2(\RR)$ we noted (see above and Proposition \ref{fuchsian_distinct}) that Fuchsian groups with torsion exhibit nontrivial growth for $e_n$ and in particular are not exceptional. The latter is also true of Euclidean 2-orbifolds, and to prove this together with the fact that $T^3$ is the only exceptional Euclidean $3$-manifold we use arguments which lead to the following generalization. 

\begin{prprep}{\ref{proposition_euccase}}
  Let \( E \) be a Euclidean space and \( \Gamma \) a lattice in \( \isom(E) \). Then \( \Gamma \) is exceptional if and only if it is free abelian. 
\end{prprep}

Moreover for hyperbolic $3$-manifolds we can produce \textit{regular} non-homeomorphic covering spaces of equal degree:

\begin{prprep}{\ref{main_normal}}
  Let $\Gamma$ be the fundamental group of a complete hyperbolic $3$-manifold of finite volume. Then there exist sequences $c_n, d_n \to +\infty$ such that for each $n$ we can find at least $c_n$ normal subgroups of index $d_n$ in $\Gamma$, which are pairwise non-isomorphic.
\end{prprep}

\subsection{Questions}

There are many more questions about $e_n(M)$ that arise naturally.

\subsubsection{Quantitative questions}

The only question we deal with in general is whether or not $e'_n(M)=1$ for all $n\in \NN$. Once it is known that this is \emph{not} the case, it would of course be interesting to know how $e_n(M)$ and $e_n'(M)$ behave as functions of $n$.

For any hyperbolic $3$-manifold $M$, as we already noted above, there exists a constant $\alpha>0$ so that
\[e_{n}'(M) \geq (n!)^{\alpha},\] 
for all large enough $n\in \NN$. However, as of yet, there is no control on $\alpha$ in terms of, for instance, the hyperbolic geometry of $M$. We note that an upper bound on the smallest index of a subgroup of $\pi_1(M)$ that surjects onto $F_2$ would give a lower bound on $\alpha$.

On the other hand, our constructions of non-homeomorphic covers for Seifert fibered manifolds produce (in general) much sparser sequences of covers and we do not have an expectation for the answer to the quantitative question above.
 
The following is a sample of concrete questions we would like to know the answers to. 

 \begin{enumerate}
 \item Which growth types (polynomial, intermediate, exponential, factorial) appear for $e_n'(M)$?
 \item What are the 3-manifolds for which $e_n'(M)$ is polynomial, intermediate, exponential, factorial?
 \item Are there hyperbolic 3-manifolds for which $e_n(M)$ is not monotone even for large $n$?
 \item Given a hyperbolic 3-manifold do $e_n(M)$ and $e_n'(M)$ have the same growth type?
 \item If $M$ is a hyperbolic manifold, what information on the geometry of $M$ is encoded in the sequence $\{e_n(M)\}_{n\in\NN}$?
 \end{enumerate}

 We note that for general $3$-manifolds, many of the question above are also still open for the sequence $\{s_n(\pi_1(M))\}_{n\in\NN}$. For $2$-dimensional manifolds we know more. For instance, it is known that for a hyperbolic $2$-orbifold $M$, the sequence $\{s_n(\pi_1(M))\}_{n\in\NN}$ does encode some geometric information: the factorial growth rate of $s_n(\pi_1(M))$ is linear in the area of the orbifold \cite{MP1,Liebeck_Shalev}. On the other hand, for hyperbolic $3$-manifolds such a simple relation could never hold: there are hyperbolic $3$-manifolds $M$ of arbitrarily large volume but with $\mathrm{rank}(\pi_1(M))\leq 5$. The latter implies that the factorial growth rate of the sequence $\{s_n(\pi_1(M))\}_{n\in\NN}$ cannot be more than $4$.

\subsubsection{Stronger versions of non-exceptionality}

In most cases where we establish that a group $G$ is not exceptional, we do so by providing, for infinitely many distinct values of $d$, pairs $(G_1, G_2)$ of non-isomorphic subgroups of $G$ with index $d$ in $G$. We are not aware of an example of an infinite residually finite group which is not exceptional, but for which this stronger property is in default.

\subsubsection{Non-exceptionality for other classes of groups}

One may inquire about the exceptionality, or lack thereof, of other interesting classes of groups. Here are some examples:
  \begin{enumerate}  
  
  \item The only exceptional right-angled Artin groups are the free groups and the free abelian groups\footnote{This can be proved using the fact that all other RAAGs surject onto $\ZZ^2\ast \ZZ$.}. What about more general Artin groups? 

  \item Which Coxeter groups are exceptional? It seems reasonable to expect that only a few finite ones are.

  \item Are non-abelian polycyclic groups always non-exceptional?

 \item Are there other examples of non-elementary word-hyperbolic groups, besides surface groups and free groups, that are exceptional?
  \end{enumerate}

Note that all the groups in the first three items are linear and finitely generated and hence, by Malcev's theorem \cite{Malcev}, residually finite. As such, they at least have infinitely many different finite index subgroups (given that they are infinite themselves).

\subsection{Structure of the proof}

Our proof consists of a case by case analysis. That is, we use the prime decomposition theorem and Perelman's work to divide up $3$-manifolds into various geometric classes that we then address separately.

For most of the paper, we consider prime, orientable $3$-manifolds. Other than certain simple cases (Proposition~\ref{prop:specialones}), we show that prime, compact, orientable $3$-manifolds with non-empty toroidal boundary are not exceptional in Proposition~\ref{prp_JSJ}. Among the closed, prime, orientable $3$-manifolds, we see that $S^1\times S^2$ is clearly exceptional (Proposition~\ref{prop:specialones}), so it only remains to consider closed, irreducible, orientable $3$-manifolds. Those with a trivial JSJ decomposition are divided into the following classes:
\begin{itemize}
\item hyperbolic $3$-manifolds (Proposition \ref{proposition_hypcase}), 
\item Euclidean $3$-manifolds (Proposition \ref{prop:euclidean}), 
\item spherical $3$-manifolds (Proposition \ref{prp_spherical}), 
\item and the remaining Seifert fibered $3$-manifolds (Proposition \ref{prp_seif}).
\end{itemize}

\begin{figure}[h]
\begin{tikzpicture}[node distance=10mm and 5mm]
  \tikzstyle{box} = [rectangle, draw, text width=10em, text centered, rounded corners, minimum height=3em]
  \tikzstyle{endbox} = [rectangle, draw, text width=10em, text centered, rounded corners, minimum height=3em, fill=gray!30]

  \node (root) [box, very thick] {$M^3$ compact with empty or toroidal boundary} ;
  \node (prime) [box,below=of root] {Prime} ; 
  \node (nonprime) [box,right=of root] {Non-prime} ;
  \node (end-nonprime) [endbox,right=of nonprime] {Exceptional iff \\$M\cong k\cdot S^1 \times  S^2$, $k>1$ (Proposition~ \ref{prop:nonprime})} ;
	\draw[->]
  (root) edge (prime) (root) edge (nonprime) (nonprime) edge (end-nonprime);
  
  \node (orientable) [box,below=of prime] {Orientable} ; 
  \node (nonorientable) [box,right=of prime] {Non-orientable} ;
  \node (end-nonorientable) [endbox,right=of nonorientable] {Exceptional iff $M\cong S^1 \widetilde{\times} S^2$ (Proposition~\ref{prop:non-orientable-prime})} ;
  \draw[->]
  (prime) edge (orientable) (prime) edge (nonorientable) (nonorientable) edge (end-nonorientable);
  
  \node (irreducible) [box,below=of orientable] {Irreducible} ; 
  \node (S1xS2) [endbox, right=of orientable] {$M\cong S^1 \times  S^2$ \\$\Rightarrow$  exceptional (Proposition~\ref{prop:specialones})};
  \draw[->]
  (orientable) edge (irreducible) (orientable) edge (S1xS2);
  
  \node (nonclosed) [box, right=of irreducible] {Non-empty boundary} ;
  \node (specialboundary) [endbox, right=of nonclosed] {$M\cong T^2\times I$ or $S^1\times D^2$\\$\Rightarrow$ exceptional\\ (Proposition~\ref{prop:specialones})} ; 
  \node (generalnonclosed) [endbox,below right=of nonclosed] {Other cases \\$\Rightarrow$ not exceptional\\ (Propositions~\ref{prop:specialones},~\ref{prp_JSJ})} ;
  \node (closed) [box,below=of irreducible] {Closed} ; 
  \draw[->]
  (irreducible) edge (closed) (irreducible) edge (nonclosed) (nonclosed) edge (specialboundary)  (nonclosed) edge (generalnonclosed);
  
  \node (trivialJSJ) [box,below=of closed] {Trivial JSJ \\ decomposition} ; 
  \node (nontrivialJSJ) [box,below right=of closed] {Non-trivial JSJ decomposition} ;
  \node (sol) [endbox,right=of nontrivialJSJ] {Sol manifold \\$\Rightarrow$ not exceptional (Proposition~\ref{prp_sol})} ;
  \node (nonsol) [endbox,below right=of nontrivialJSJ] {Not Sol\\ $\Rightarrow$ not exceptional (Proposition~\ref{prp_JSJ})} ;
  \draw[->]
  (closed) edge (trivialJSJ) (closed) edge (nontrivialJSJ) (nontrivialJSJ) edge (sol) (nontrivialJSJ) edge (nonsol) ;
  
  \node (hyperbolic) [endbox,below =of trivialJSJ] {Hyperbolic \\ $\Rightarrow$ not exceptional (Proposition~\ref{proposition_hypcase})} ;
  \node (seifertfibered) [box,below right=of trivialJSJ] {Seifert fibered} ; 
  \draw[->]
  (trivialJSJ) edge (hyperbolic) (trivialJSJ) edge (seifertfibered);

  \node (spherical) [box,below left=of seifertfibered] {Covered by $S^3$} ;   
  \node (euclidean) [box,below=of seifertfibered] {Covered by $T^3$} ;
  \node (generic-seifert) [box, below right=of seifertfibered] {Other cases} ;
  \draw[->]
  (seifertfibered) edge (euclidean) (seifertfibered) edge (spherical) (seifertfibered) edge (generic-seifert);
  
  \node (end-sphere) [endbox,below=of spherical] {See Proposition~\ref{prp_spherical}};
  \node (end-gen-seif) [endbox,below=of generic-seifert] {Not exceptional (Proposition~\ref{prop:seifert-fibered-for-leitfaden})};
  \node (end-euc) [endbox,below=of euclidean] {Exceptional iff \\ $M\cong T^3$\\ (Proposition~ \ref{prop:euclidean})};]
  \draw[->]
  (euclidean) edge (end-euc) (spherical) edge (end-sphere) (generic-seifert) edge (end-gen-seif);
  
\end{tikzpicture}
\caption{Leitfaden}
\label{fig_diagram}
\end{figure}
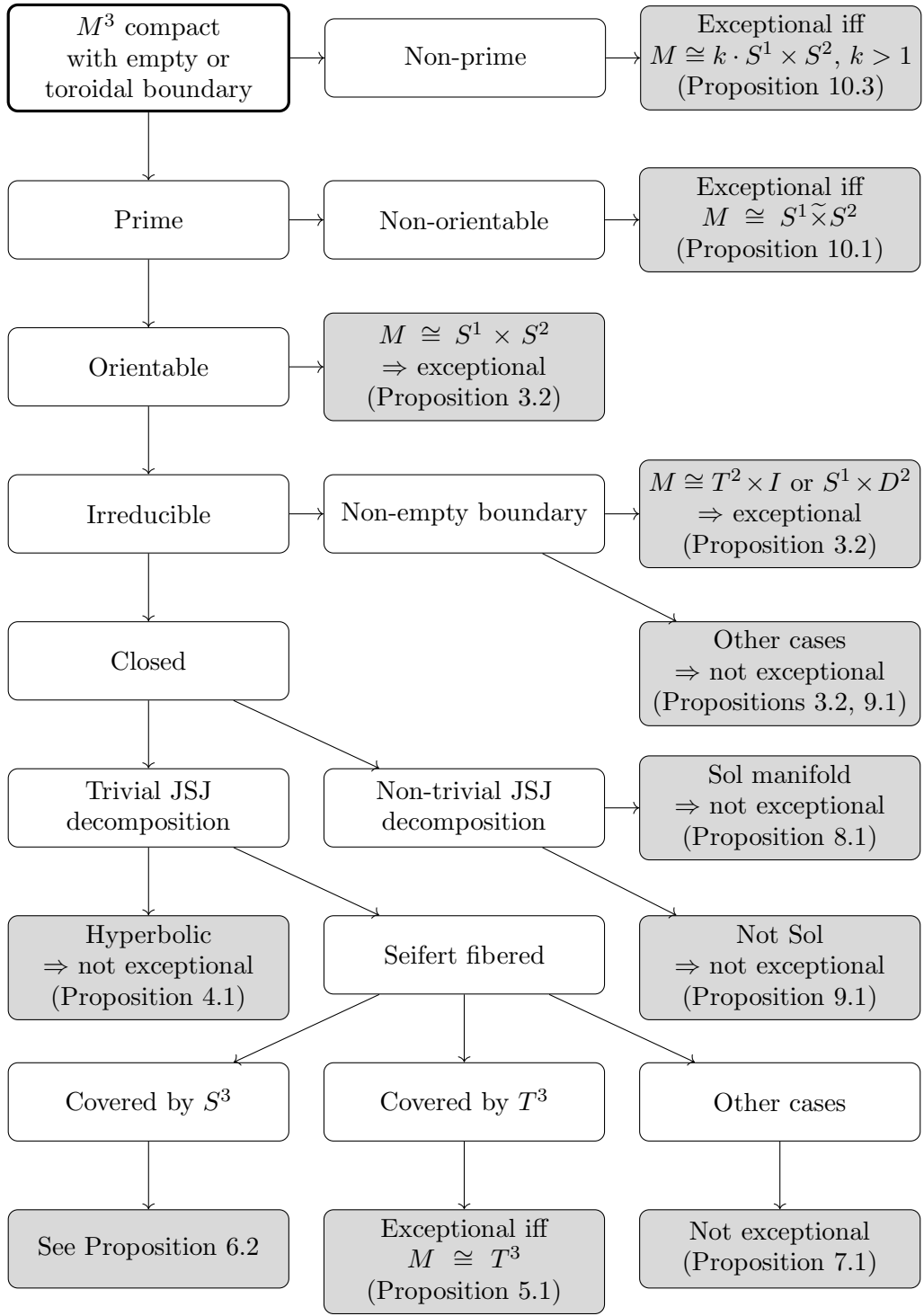

Then we treat closed, irreducible, orientable $3$-manifolds with a non-trivial JSJ decomposition, where we need two separate arguments - one for Sol manifolds (Proposition \ref{prp_sol}) and one for all others (Proposition \ref{prp_JSJ}). 
The classification of prime non-orientable exceptional $3$-manifolds and non-prime exceptional $3$-manifolds is an almost direct consequence of our work with prime and orientable $3$-manifolds and is given in Section~\ref{sec:nonprime}. The diagram in Figure \ref{fig_diagram} shows the structure of the proof.

\subsection{Conventions}
All manifolds are assumed to be compact and connected. We will call a compact manifold with non-empty toroidal boundary \emph{hyperbolic} if its interior admits a complete hyperbolic metric of finite volume. Usually we do not distinguish between a manifold and its homeomorphism type.

\subsection{Acknowledgements}
SF is grateful for the support provided by the SFB 1085 ``higher invariants''  funded by the DFG. JR was supported by ANR grant ANR-16-CE40-0022-01-AGIRA. 
Most of the work on the paper was done while various subsets of the authors met at ICMAT Madrid, the Max Planck Institute for Mathematics in Bonn, the University of Bonn, and the University of Regensburg. We are very grateful for the hospitality of these institutions.  Lastly, we thank the anonymous referee for the careful reading of the paper and for the useful comments and suggestions.


\section{Surfaces and two-dimensional orbifolds}

\subsection{Surfaces}

As a warm-up we state the following fairly elementary lemma, which doubles as a great exam problem in a first course on topology.

\begin{lemma}\label{lem:exceptional-surfaces}
The only exceptional compact 2-dimensional manifolds are the disk $D^2$, the annulus, the M\"obius band, the real projective plane $\RR \textup{P}^2$, and all closed orientable surfaces.
\end{lemma}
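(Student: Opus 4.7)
My plan is to run through the classification of compact surfaces, confirming exceptionality for the five listed families by direct inspection and proving non-exceptionality for all others by exhibiting, for some $n$, a pair of non-homeomorphic connected degree-$n$ covers.

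The exceptional cases are essentially immediate. For $D^{2}$ the fundamental group is trivial, so $D^{2}$ is its only connected cover. For the annulus and the M\"obius band one has $\pi_1 = \ZZ$, which has a unique subgroup of each index; the corresponding connected cover is always an annulus in the first case and, in the second, an annulus or a M\"obius band according to the parity of the degree. For $\RR\textup{P}^{2}$ the only connected covers are $\RR\textup{P}^{2}$ and $S^{2}$. Finally, any connected degree-$n$ cover of a closed orientable $\Sigma_g$ is a closed orientable surface of Euler characteristic $n\chi(\Sigma_g)$, hence has genus $n(g-1)+1$, uniquely determined by $n$ when it exists.

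For the closed non-orientable surfaces $N_k$ with $k \geq 2$, I would exhibit two non-homeomorphic degree-$2$ covers. The vector space $H^{1}(N_k;\ZZ/2)$ has dimension $k \geq 2$, so there are several index-$2$ subgroups: exactly one corresponds to the orientation homomorphism $w_1$ and yields the orientable double cover $\Sigma_{k-1}$, while any other yields a non-orientable cover, which must be $N_{2k-2}$ by the Euler characteristic computation. For $k \geq 2$ these are distinct, so $N_k$ is not exceptional.

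The main remaining case is a compact surface $M$ with non-empty boundary whose fundamental group is free of rank $r \geq 2$, and I claim such $M$ is not exceptional. If $M$ is non-orientable the previous argument still applies: since $\dim_{\FF_2} H^{1}(M;\ZZ/2) = r \geq 2$, one finds an index-$2$ subgroup other than $\ker w_1$, producing a non-orientable double cover to compare with the orientable one. If $M$ is orientable then every cover is orientable and is determined up to homeomorphism by its Euler characteristic together with its number of boundary components $b'$, so it suffices to produce two degree-$n$ covers with different values of $b'$. When $b \geq 2$, enough of the boundary classes are independent in the abelianization that two well-chosen surjections $\pi_1(M) \to \ZZ/n$ produce different totals $b' = \sum_i n/|\phi(c_i)|$. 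The delicate case is $M = \Sigma_{g,1}$ with $g \geq 1$, because the boundary class is a product of commutators and so lies in $[\pi_1,\pi_1]$, forcing $b' = n$ for every regular abelian cover. To break this degeneracy I would pass to a non-regular cover: define $\phi : F_{2g} \to S_3$ by $\phi(a_1) = (12)$, $\phi(b_1) = (123)$, and $\phi(a_i) = \phi(b_i) = e$ for $i \geq 2$. Then $\phi$ has image $S_3$, so the corresponding cover is connected of degree $3$, while $\phi([a_1,b_1]\cdots[a_g,b_g]) = (123)$ acts as a $3$-cycle on $\{1,2,3\}$, so the boundary lifts to a single circle; this cover has $b' = 1$, in contrast with $b' = 3$ for any surjection onto $\ZZ/3$. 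This non-regular construction is the step I expect to be the main obstacle, since every homological technique collapses to the same answer in this sub-case.
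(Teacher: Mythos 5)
Your proof is correct and complete, and for most cases it coincides with the paper's: the exceptional list is verified by inspection, and non-orientable surfaces other than $\RR\textup{P}^2$ and the M\"obius band are handled by comparing the orientable double cover with one of the other $2^k-1$ index-two subgroups detected by $H^1(\,\cdot\,;\ZZ/2)$, exactly as in the paper. The genuine divergence is in the orientable-with-boundary case. The paper first passes to a finite cover with $k\geq 3$ boundary components (implicitly using the principle, recorded later as Lemma~\ref{lem:nonexceptional-cover}, that non-exceptionality descends from covers) and then compares two surjections onto $\ZZ/k$ that distribute differently over the boundary classes; this sidesteps entirely the degeneracy you isolate for $\Sigma_{g,1}$, where the boundary class is a product of commutators and every abelian cover returns $b'=n$. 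You instead attack $\Sigma_{g,1}$ head-on with a non-regular degree-$3$ cover via $\phi\colon F_{2g}\to \sym_3$, $a_1\mapsto(12)$, $b_1\mapsto(123)$; the computation $\phi([a_1,b_1])=(123)$ is right, the image is transitive, and the boundary lifts to a single circle, which indeed distinguishes this cover from the $\ZZ/3$-cover with $b'=3$. Your route is more explicit and avoids the preliminary passage to a cover; the paper's is shorter and uniform across all $b\geq 1$. One small point of care: for $b=2$ the two boundary classes satisfy $a_2=-a_1$ in $H_1$, so "independence of boundary classes" is not quite what you use there — rather, positive genus lets you vary the order of $\phi(a_1)$ (from $n$ down to $1$) while keeping $\phi$ surjective, which still changes $b'$; the genuinely constrained case is $b=1$, which you treat correctly.
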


\begin{proof}
It is clear that the surfaces listed are exceptional. (For closed orientable surfaces this is an immediate consequence of the classification of closed orientable surfaces in terms of their Euler characteristic and the multiplicativity of Euler characteristic under finite covers.)

Next assume that $M$ is an orientable surface with at least one boundary component and that $M$ is neither a disk nor an annulus. After possibly going to a finite cover we can assume that $M$ has $k\geq 3$ boundary components. By giving the boundary components the orientation coming from~$M$, the boundary of $M$ induces a summand of $H_1(M)$, that is naturally isomorphic to
\[\left(\bigoplus_{i=1}^k \ZZ\; a_i \right) \bigg/ \ZZ\; (a_1+\cdots +a_k).\] 

Choose an epimorphism $\varphi\colon \pi_1(M)\to \Z/k$  such that $\varphi(a_1) = 1$, $\varphi(a_2)=-1$, and $\varphi(a_i)=0$ for $i\neq 1,2$. On the other hand, we can also find an epimorphism $\psi \colon \pi_1(M)\to \Z/k$ such $\psi(a_i)\ne 0$ for all $i$. But then the covers corresponding to $\ker(\varphi)$ and $\ker(\psi)$ have different numbers of boundary components, so they are not homeomorphic.

Now suppose that $M$ is a non-orientable surface that is not homeomorphic to either $\RR\textup{P}^2$ or the M\"obius band. There exists precisely one $2$-fold cover of $M$ that is orientable. On the other hand, we have $H^1(M;\Z/2)\cong (\Z/2)^k$ for some $k\geq 2$. Since $k\geq 2$, there exists at least one other $2$-fold cover. This shows that $M$ is not exceptional.
\end{proof}


\subsection{Fuchsian groups}

If $\Gamma$ is a finitely generated Fuchsian group of finite covolume, then the quotient $O = \Gamma \bs \HH^2$ is an orbifold of finite type, that is, a compact surface with genus $g$ and $k$ punctures as well as conical points with angles $2\pi/m_i$, $1 \le i \le s$, for some $g,s,k$. We will call the tuple $(g, k, m_1, \ldots, m_s)$ the {\em signature} of $\Gamma$. 
The orbifold Euler characteristic of $O$ is defined by 
\[
-\chi(O) := 2g - 2 + k + \sum_{i=1}^s \left( 1 - \frac 1 {m_i} \right).
\]
It is multiplicative in covers, that is, if $\Gamma'$ is a finite index subgroup of $\Gamma$ and $O' = \Gamma' \bs \HH^2$ then $\chi(O') = [\Gamma : \Gamma'] \cdot \chi(O)$. 

\begin{proposition} \label{fuchsian_distinct}
  Let $\Gamma \subset \mathrm{PSL}_2(\mathbb R)$ be a Fuchsian group of finite covolume with signature $(g, k, m_1, \ldots, m_s)$. Let $d$ be the number of distinct divisors of all $m_i$. Then 
  \begin{equation} \label{bds_fuchsian}
  n^{d-1} \ll e_n'(\Gamma) \ll n^{d-1}.
  \end{equation}
\end{proposition}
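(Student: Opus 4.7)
The plan is to parametrize isomorphism classes of finite index subgroups $H \leq \Gamma$ by a $d$-tuple of non-negative integers subject to a single linear constraint coming from the orbifold Euler characteristic, and then count such tuples. For $H \leq \Gamma$ of finite index, the quotient orbifold $O_H = H \bs \HH^2$ has signature $(g', k', m_1', \ldots, m_{s'}')$ with each $m_i'$ dividing some $m_j$ (cone points of covers have orders dividing those of the base), hence lying in $D := \{e > 1 : e \mid m_j \text{ for some } j\}$, of cardinality $d-1$. Writing $n_e := \#\{i : m_i' = e\}$, I would show that in the non-cocompact case the isomorphism type of $H$ is determined by the $d$-tuple $(r, (n_e)_{e \in D})$ with $r := 2g' + k' - 1$: eliminating a cusp generator from the standard Fuchsian presentation of $H$ exhibits it as a free product $F_r * (\mathbb Z/m_1') * \cdots * (\mathbb Z/m_{s'}')$, and uniqueness of such decompositions follows from Grushko's theorem. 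In the cocompact case the isomorphism type is analogously captured by $(g', (n_e)_{e \in D})$, via the classical classification of cocompact Fuchsian groups by signature.

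Multiplicativity of the orbifold Euler characteristic then gives the single linear relation
\[
r + \sum_{e \in D} n_e \left(1 - \tfrac{1}{e}\right) = 1 + [\Gamma : H] \cdot (-\chi(O))
\]
in the non-cocompact case (with the analogue $2g'-2$ replacing $r$ when $\Gamma$ is cocompact). For each index $[\Gamma:H] = n' \leq n$, this linear equation in the $d$ non-negative integer unknowns has $O((n')^{d-1})$ solutions (lattice points in a $(d-1)$-dimensional simplex of diameter $O(n')$), so taking the supremum over $n' \leq n$ yields the upper bound $e_n'(\Gamma) = O(n^{d-1})$.

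For the matching lower bound, I would show that for large $n'$ a positive density of the tuples above is actually realized by some subgroup of $\Gamma$ of index $n'$. Given a tuple $(r, (n_e)_{e \in D})$ satisfying the Euler characteristic equation at index $n'$ together with the necessary congruence conditions (so that the prescribed cycle types can occur in $S_{n'}$), I would construct a transitive homomorphism $\varphi : \Gamma \to S_{n'}$ whose cycle types on the cone-point and cusp generators realize the data; the point stabilizer $\varphi^{-1}(\mathrm{Stab}(1))$ then has index $n'$ in $\Gamma$ and the prescribed isomorphism type. For large $n'$, the existence of such $\varphi$ is a classical Hurwitz realizability statement (a form of the Riemann existence theorem for branched covers with prescribed monodromy), and a positive fraction of the tuples satisfy the congruence conditions. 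This yields $\gg n^{d-1}$ distinct isomorphism types and hence $e_n'(\Gamma) \gg n^{d-1}$.

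The hardest step is this realizability claim in the lower bound: producing an \emph{honestly transitive} permutation representation of $\Gamma$ realizing essentially every combinatorially consistent tuple (so that the associated cover is connected and has the prescribed signature), and checking that the congruence constraints do not shrink the count below the expected order $n^{d-1}$. This can be handled by explicit amalgamation of cycles in $S_{n'}$ combined with a standard connectedness argument controlling the image of $\varphi$.
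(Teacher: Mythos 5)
Your upper bound is essentially the paper's: parametrize signatures of index-$n'$ subgroups by the genus/cusp datum together with the counts $n_e$ of cone points of each order $e$ dividing some $m_i$, observe that the isomorphism type is determined by (and determines) this tuple, and count lattice points on the hyperplane cut out by multiplicativity of the orbifold Euler characteristic. The lower-bound strategy — realizing prescribed cycle types of the torsion generators by transitive homomorphisms to $\sym_{n'}$ and reading off the cone data of the point stabilizer — is also the paper's.

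The genuine gap is in the realizability step, exactly where you flag it, and the appeal to ``a classical Hurwitz realizability statement'' does not close it. The Hurwitz existence problem for branched covers of the sphere is precisely the case that causes trouble: for a cocompact Fuchsian group of genus $0$ (e.g.\ a triangle group $\langle x_1,x_2,x_3 \mid x_i^{m_i}, x_1x_2x_3\rangle$) the long relation couples all the torsion generators, so you cannot freely amalgamate cycles for each $x_i$ and then ``complete by a transitive representation''; moreover there exist genuinely non-realizable branch data on $S^2$, and the full characterization is open, so one cannot simply quote a theorem that a positive proportion of tuples is realized. Your amalgamation-plus-connectedness sketch works when $\Gamma$ splits as a free product $\langle x_1\rangle\ast\cdots\ast\langle x_s\rangle\ast F$ (i.e.\ $k\ge 2$, or $k\ge1$ and $g\ge1$), which is how the paper proceeds in those cases. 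For $k=0$, $g\ge2$ the paper instead chooses $\rho(a_1),\rho(b_1)$ transitive, arranges the product $\prod\rho(x_i)[\rho(a_1),\rho(b_1)]$ to be an even permutation, and solves for $a_2,b_2$ using the fact that every alternating permutation is a commutator. For the remaining small cases ($k=0$, $g\le1$, and $g=0$, $k=1$) it proves a separate bootstrap lemma, via Liebeck--Shalev's counting of homomorphisms to $\sym_n$ with prescribed conjugacy classes, producing a finite-index subgroup with $k\ge2$ or $g\ge2$ that still contains torsion of every order occurring in $\Gamma$, and then applies Lemma~\ref{lem:nonexceptional-cover}-type monotonicity of $e_n'$. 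A second, smaller point: rather than realizing a positive proportion of \emph{all} admissible tuples $(n_e)_{e\in D}$ (which requires distributing cone points of a common divisor $e$ among several $x_i$), the paper restricts to tuples supported on a partition of $D$ into sets $\Omega_i'$ of divisors of the individual $m_i$; this makes realizability trivial (each $\rho(x_i)$ independently realizes a partition of $n$) while still yielding $\prod_i p_{\Omega_i'}(n)\gg n^{d-1}$ types. Adopting that reduction would let you avoid the congruence/distribution bookkeeping entirely.
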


In specific cases one can say more by studying the proof of the upper and lower bounds more carefully; for example $e_n(\PSL_2(\Z)) = n^2/6 + O(n)$ as we stated in the introduction: to obtain this we note that $e_n(\PSL_2(\Z))$ is equal to the number of solutions to the equation \eqref{euler_char_OK} below by \cite[Theorem B]{MP2}. Computing the covolume of the lattice appearing in the proof of the upper bound below (an exercise in Euclidean geometry) we get the result. We could have used M\"uller and Schlage-Puchta's result to get a more precise asymptote in more cases (though the computation of constants would require further analysis), but as it does not have the generality we require (in particular, it is not clear whether an analogous result holds for cocompact Fuchsian groups) we elected to give a more direct and elementary proof which works for all Fuchsian groups. 

\begin{proof}
  For the upper bound in \eqref{bds_fuchsian} we note that two Fuchsian groups with respective signatures $(g, k, m_1, \ldots, m_s)$ and $(h, l, n_1, \ldots, n_r)$ are isomorphic if and only if $(m_1, \ldots, m_s) = (n_1, \ldots, n_r)$ and either $k = 0 = l$ and $g = h$ or $k, l > 0$ and $2g+k = 2h+l$. In addition, the condition that $k = 0$ is invariant under taking finite index subgroups. 
  
  Thus bounding the number of possible tuples $(g+k, m_1, \ldots, m_s)$ for a degree $n$ cover of $O:=\Gamma \bs \HH^2$ with signature $(g, k, m_1, \ldots, m_s)$ gives an upper bound for $e_n(\Gamma)$. Let $\Omega$ be the set of divisors of elements of the set $\{m_i\}$. Let $d$ denote the cardinality $d := |\Omega|$. If $\Gamma'$ is a finite index subgroup of $\Gamma$ with signature $(g', k', m_1', \ldots, m_t')$ we further set $r := 2g' - 2 + k'$, and 
  \[
  k_m = |\{ 1\le i \le t : m_i' = m\}| 
  \]
  for $m \in \Omega$, so that (as each $m_i'$ must divide some $m_i$) we have $\sum_{i=1}^t \left( 1 - \frac 1 {m_i'} \right) = \sum_{m\in\Omega,\, m\not=1} \left( 1 - \frac 1 {m} \right)k_m$.
  We see by multiplicativity of Euler characteristic that, letting $n = |\Gamma/\Gamma'|$ be the index, we have: 
  \begin{equation} \label{euler_char_OK}
  r + \sum_{\substack{m \in \Omega \\ m \not= 1}} \left(1 - \frac 1 m \right)k_m = n\chi(O). 
  \end{equation}
  and hence the number of non-isomorphic subgroups of $\Gamma$ is at most the number of points $(r/n, k_m/n : m\in \Omega, m\not=1)$ of the lattice $\frac 1 n \Z^d$ which belong to the $(d-1)$-simplex given by the intersection of the hyperplane $x_0 + \sum_{m \in \Omega,\, m\not=1} (1 - 1/m)x_m = \chi(O)$ with the positive quadrant. Now if $P$ is a polygon in an affine space $E$ and $L$ a lattice in $E$ we have $|L \cap nP| \ll n^{\dim E}$, and the upper bound follows\footnote{More precisely $|L \cap nP| \sim Cn^{\dim P}$ where $C$ is the quotient of the volume of $P$ by the covolume of $L$, so computing these volumes gives an explicit asymptotic upper bound. }. 

  \medskip

  To prove the lower bound in \eqref{bds_fuchsian} we construct explicit subgroups which contain distinct numbers of conjugacy classes of elements of order $m_i$. To do this we use the following presentation for the Fuchsian group $\Gamma$ of signature $(g, k, m_1, \ldots, m_s)$:
  \begin{equation} \label{pres_fuchsian}
  \Gamma = \left\langle x_1, \ldots, x_s, p_1, \ldots, p_k, a_1, b_1, \ldots, a_g, b_g \,\middle|\, x_i^{m_i},\,  \prod_i x_i \prod_j p_j \prod_l [a_l, b_l] \right\rangle.  
  \end{equation}
  
  Our first step is to reformulate our problem in terms of morphisms $\rho \colon \Gamma \to \sym_n$, where $\sym_n$ denotes the symmetric group on $n$ letters. This will use the well known correspondence between subgroups of index $n$ and transitive permutation representations  $\rho \colon \Gamma \to \sym_n$ (see for instance \cite{Lubotzky_Segal}). We start with the following lemma. Given a group $G$, $Z(x)$ will denote the centraliser of $x \in G$ and $x^G$ will denote the conjugacy class of $x$ in $G$. 

  \begin{lemma} \label{fixed_points_sing}
   Let $G$ be a group and $\rho \colon G \to \sym_n$ a homomorphism with transitive image; let 
   \[
   H =\mathrm{Stab}_\rho \{1\} < G
   \] 
   be the stabiliser of $1$ in $G$ via $\rho$. Then the number of conjugacy classes in $H$ containing an element of $x^G$ is equal to the cardinality of the quotient set 
   \[
   \raisebox{-5pt}{$\rho(Z(x))$} \mspace{-6mu} \diagdown \mspace{-6mu} \raisebox{5pt}{$\st{ a\in \{1,\ldots,n\} }{ \rho(x)\cdot a = a}$}. 
   \]
  \end{lemma}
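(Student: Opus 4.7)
The plan is to exploit the standard correspondence between the transitive $G$-set $\{1,\ldots,n\}$ and the coset space $G/H$, under which the element $a = \rho(g)\cdot 1$ corresponds to the coset $gH$. Under this identification, the set of fixed points $F := \{a : \rho(x)\cdot a = a\}$ corresponds to the cosets $\{gH : g^{-1}xg \in H\}$, since $\rho(x)\rho(g)\cdot 1 = \rho(g)\cdot 1$ if and only if $\rho(g^{-1}xg)\cdot 1 = 1$. The rest of the argument is then a bookkeeping exercise organized around a map from $F$ down to the $H$-conjugacy classes of interest.

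I would next define a map $\Phi$ from $F$ to the set of $H$-conjugacy classes in $H$ meeting $x^G$. For each $a\in F$, pick some $g_a\in G$ with $\rho(g_a)\cdot 1 = a$, and let $\Phi(a)$ be the $H$-conjugacy class of $y_a := g_a^{-1}xg_a$, which lies in $H$ by the observation above. The map $\Phi$ is well-defined because any two choices of $g_a$ differ by right multiplication by an element of $H$, and this conjugates $y_a$ within $H$. Surjectivity of $\Phi$ is immediate: any $y = g^{-1}xg \in H\cap x^G$ is equal to $\Phi(\rho(g)\cdot 1)$.

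The main step, and really the only one requiring genuine care, is to identify the fibers of $\Phi$ with the $\rho(Z(x))$-orbits on $F$. If $\Phi(a_1) = \Phi(a_2)$, there exists $h\in H$ with $h^{-1}y_{a_1}h = y_{a_2}$; rearranging, the element $k := g_{a_1}hg_{a_2}^{-1}$ satisfies $k^{-1}xk = x$, so $k\in Z(x)$, and one verifies directly that $\rho(k)\cdot a_2 = a_1$ using $\rho(h)\cdot 1 = 1$. Conversely, given $a_1 = \rho(k)\cdot a_2$ with $k\in Z(x)$, I can take $g_{a_1} := kg_{a_2}$, and then $g_{a_1}^{-1}xg_{a_1} = g_{a_2}^{-1}k^{-1}xkg_{a_2} = g_{a_2}^{-1}xg_{a_2}$, so $\Phi(a_1) = \Phi(a_2)$. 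Combining these two directions, $\Phi$ descends to a bijection $\rho(Z(x))\backslash F \;\longleftrightarrow\; \{H\text{-conjugacy classes meeting }x^G\}$, which gives the claimed equality of cardinalities. The proof is purely formal; the only real hazard is keeping track of which actions are on the left and which on the right.
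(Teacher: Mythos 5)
Your proof is correct and is essentially the paper's argument unpacked: the paper phrases it as a chain of bijections ($H$-conjugation orbits on $x^G$ correspond to $Z(x)\backslash G/H$, which via the $Z(x)$-equivariant identification $G/H\leftrightarrow\{1,\dots,n\}$ become $\rho(Z(x))$-orbits on the fixed points of $\rho(x)$), while you construct the resulting map $\Phi$ explicitly and verify its fibers. Both rest on the same correspondence and the same bookkeeping, so there is nothing to add.
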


  \begin{proof}
    Write $Z=Z(x)$. The orbits of the right-conjugation action of $H$ on $x^G$ are in bijection with $Z \bs G / H$. The orbit map $G/H \to \{1, \ldots, n\}$ is bijective (using transitivity) and $Z$-equivariant so $Z \bs G / H \leftrightarrow \rho(Z) \bs \{1, \ldots, n\}$. Finally, the last map (bijectively) sends conjugacy classes of $x$ lying in $H$ to $\rho(Z)$-orbits fixed points of $\rho(x)$.   \end{proof}
    
  We will distinguish between non-isomorphic finite index subgroups of the same index by comparing the numbers of conjugacy classes of maximal finite cyclic subgroups (on the orbifold side this corresponds to the cone points together with their orders). We will first deal with the cases where the algebraic structure of $\Gamma$ is simpler, that is when we are in one of the following cases : $k \ge 2$ ($O$ has at least 2 cusps), $k \ge 1, g \ge 1$ (at least 1 cusp and genus at least 1) or $g \ge 2$. We will deal with the remaining cases with a bootstrap argument. 

  Let $m \in \Omega$, $m \not= 1$. Let $\rho$ be a morphism $\Gamma \to \sym_n$ and $H = \mathrm{Stab}_\rho(1)$ as in the statement of the lemma above. Then if $x \in H$ is a primitive element\footnote{Recall that a group element is primitive if it generates the maximal cyclic subgroups it lies in.} of order $m$, we have that $m$ is a divisor of some $m_i$ and $x=gx_i^{pm_i/m}g^{-1} \in H$ for some $p \in \ZZ$ coprime to $m$, but $gx_i^{m_i/q}g^{-1} \not\in H$ for any $m\mid q \mid m_i$, $m < q$. If we fix such $i$ the corresponding conjugacy classes of primitive order $m$ elements correspond, via the lemma, to $\rho(Z(x_i))$-orbits on the set of $m/m_i$-cycles of $\rho(x_i)$ times the set of invertible elements of $\ZZ/m\ZZ$. When we consider only the maximal cyclic subgroups they generate we kill the $(\ZZ/m\ZZ)^\times$-factor; in conclusion, the number of conjugacy classes of maximal cyclic subgroups of $H$, which are of order $m$, is equal to the number of $\rho(Z(x_i))$-orbits on the set of $m/m_i$-cycles of $\rho(x_i)$. 

  Now in a Fuchsian group we have $Z(x) = \langle x \rangle$ for any nontrivial element. In particular, $\rho(Z(x))$ acts trivially on the cycles of $\rho(x)$. So the number $N$ of conjugacy classes of maximal cyclic subgroups of order $m$ in $H$ is given by the following formula : 
  \begin{equation} \label{formula_nb_cc}
  N = \sum_{i:\; m|m_i} \#\{ (m_i/m) \text{-cycles in the image of } \rho(x_i) \}.
  \end{equation}
 
  Let $\Omega_i$ be the set of divisors of $m_i$, and choose subsets $1 \in \Omega_i' \subset \Omega_i$ so that $\Omega \setminus \{1\}$ is the disjoint union of all $\Omega_i' \setminus \{1\}$ ; in particular we have
  \begin{equation} \label{cardinalities}
    d - 1 = \sum_{i=1}^s \left(|\Omega_i'| - 1\right)
  \end{equation}
  Let $n \ge 2$. Consider collections $(\pi_1, \ldots, \pi_s)$, where for all $i$, $\pi_i$ is a partition of $n$ whose parts lie in $\{m_i/m : m \in \Omega_i'\}$. We claim (and will prove in the next paragraph) that for a positive proportion of such partitions it is possible to construct a representation of $\Gamma$ in the symmetric group $\sym_n$ such that the cycle decomposition of $\rho(x_i)$ corresponds to $\pi_i$. Moreover, for different partition-tuples the subgroups cannot be conjugated to each other: if $m \in \Omega, m\not= 1$ then maximal cyclic subgroups of order $m$ can only come from lifts of the only $x_i$ such that $m \in \Omega_i'$. It follows from \eqref{formula_nb_cc} that the number of such subgroups is equal to the $(m/m_i)$-part of $\pi_i$, and these numbers determine the $\pi_i$. 

  In this paragraph we prove the claim (under the hypotheses on $g, k$ stated above). Assume first that $k\ge 2$ or $k \ge 1$ and $g \ge 1$. Then we see from \eqref{pres_fuchsian} that $\Gamma$ decomposes as the free product
  \[
  \Gamma = \langle e_1 \rangle \ast \cdots \ast \langle e_s \rangle \ast F
  \]
  where $F$ is a free group. Then any collection is realisable, as we can pick any image we want for each $e_i$ and then complete it by a transitive representation of $F$. In the remaining cases where $k = 0$ and $g \ge 2$ then we can do the same thing, choosing $\langle \rho(a_1), \rho(b_1) \rangle$ to have transitive image and so that the product $\sigma = \prod \rho(e_i) [\rho(a_1), \rho(b_1)]$ is alternating, and finally $a_2, b_2$ so that the relation in \eqref{pres_fuchsian} holds (this is possible because every alternating permutation can be written as a commutator). 

  For a finite subset $A \subset \N$ denote by $p_A(n)$ the number of partitions of $n$ into elements of $A$. The representations constructed above give rise to $\prod p_{\Omega_i'}(n)$ isomorphy classes of index $n$ subgroups of $\Gamma$.
  Thus we have proven that for any $n$ we have: 
  \[
  e_n(\Gamma) \ge \prod_{i=1}^s p_{\Omega_i'}(n). 
  \]
  It is an easily observed fact that $p_A(n) \gg n^{|A|-1}$ (see also \cite[pp. 458--464]{Nathanson} for a more precise asymptotic), so that we get from the previous inequality that
  \[
  e_n(\Gamma) \gg n^{\sum_{i=1}^s (|\Omega_i'| - 1)}
  \]
  and using \eqref{cardinalities} we can conclude that $e_n'(\Gamma) \gg n^{d-1}$. 

  \medskip

  To finish the proof in the remaining cases (that is, those that do not satisfy our assumptions on $k$ and $g$), we pass to a finite cover $O'$ which satisfies the hypotheses we used above and contains torsion elements of (at least) the same orders as $O$ (see next lemma). Let $f$ be the degree of this cover, then we get from what we previously proved that $e_{fd}(\Gamma) \gg f^{d-1}$ for all $n$, which immediately implies that $e_n'(\Gamma) \gg n^{d-1}$. 
\end{proof}

The following fact that we used at the end of the proof might be standard (and using results of Liebeck--Shalev our proof is almost immediate) but we did not find a reference for it.

\begin{lemma}
  Let $\Gamma$ be a Fuchsian group. Then it has a finite index subgroup $H$ whose signature $(g, k, \ldots)$ satisfies either $k \ge 2$ or $g \ge 2$ and such that for any $m$ if $\Gamma$ contains an element of order $m$ the same goes for $H$. 
\end{lemma}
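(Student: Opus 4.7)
The plan is as follows. If the signature of $\Gamma$ already satisfies $g \geq 2$ or $k \geq 2$, the conclusion holds trivially with $H = \Gamma$. Otherwise $(g,k) \in \{(0,0),(0,1),(1,0),(1,1)\}$, and the idea is to realize $H$ as the point stabilizer of a suitable transitive permutation representation $\rho \colon \Gamma \to \sym_d$. By the correspondence recalled in the proof of Proposition \ref{fuchsian_distinct} (and essentially Lemma \ref{fixed_points_sing}), the condition that $H$ meets the $\Gamma$-conjugacy class of $x_i$ is precisely that $\rho(x_i)$ has a fixed point on $\{1,\dots,d\}$.

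First, I would arrange the cycle structure of $\sigma_i := \rho(x_i)$ to consist of exactly one fixed point together with $(d-1)/m_i$ disjoint $m_i$-cycles; this requires $d \equiv 1 \pmod{\mathrm{lcm}(m_i)}$. With this choice, the preimage of the cone point $x_i$ in $H \bs \HH^2$ consists of a single cone point of order $m_i$ (from the fixed point) together with $(d-1)/m_i$ smooth points (from the $m_i$-cycles). Hence the cone-point contributions to $\chi^{\mathrm{orb}}(H\bs\HH^2)$ match those of $\Gamma\bs\HH^2$, and multiplicativity of the orbifold Euler characteristic gives
\[
2 - 2g' - k' - \sum_i(1 - 1/m_i) \;=\; d\bigl(2 - 2g - k - \sum_i(1-1/m_i)\bigr),
\]
from which $2g' + k'$ grows linearly in $d$ since $\chi^{\mathrm{orb}}(\Gamma\bs\HH^2) < 0$. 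For $d$ sufficiently large, this forces $g' \geq 2$ or $k' \geq 2$.

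The remaining task is to exhibit such a $\rho$, subject to the defining relation of $\Gamma$ from \eqref{pres_fuchsian} and with transitive image. When $k \geq 1$ or $g \geq 1$ (the cases $(0,1), (1,0), (1,1)$), the presentation has a generator ($p_1$, or $a_1,b_1$) occurring only in the defining relation, so one can freely choose the $\sigma_i$ with the prescribed cycle structure and then solve the single relation for the image of that generator (invoking Ore's theorem that every element of $\sym_d$ with $d \geq 5$ is a commutator when solving via $[a_1,b_1]$). Transitivity is arranged by requiring that generator to act, say, as a single $d$-cycle after a suitable conjugation of the $\sigma_i$.

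The main obstacle is the cocompact genus-zero case $(g,k)=(0,0)$, since no free generator is available to absorb the relation. Here one must exhibit tuples $(\sigma_1, \dots, \sigma_s) \in \sym_d^s$ with the prescribed cycle structures, product equal to the identity, and generating a transitive subgroup. Existence for infinitely many admissible $d$ can be established via the Frobenius character formula (whose positivity for large $d$ is classical), or directly from the Liebeck--Shalev \cite{Liebeck_Shalev} subgroup-growth asymptotics for cocompact Fuchsian groups, as hinted in the remarks preceding the lemma.
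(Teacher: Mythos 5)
Your proposal follows essentially the same strategy as the paper's proof: realize $H$ as the point stabilizer of a transitive permutation representation in which each $\rho(x_i)$ has a fixed point (so that, by Lemma \ref{fixed_points_sing}, $H$ retains an element of order $m_i$), and then use multiplicativity of the orbifold Euler characteristic to force $2g'+k'$ to grow with the degree. The difference is in how the existence of such representations is established: the paper invokes the Liebeck--Shalev counting and transitivity theorems uniformly for all signatures (allowing $1\le f_i\le m_i$ fixed points, whichever congruence the degree permits), whereas you build the representations by hand when $g\ge 1$ or $k\ge 1$ and fall back on character-theoretic existence results only for the cocompact genus-zero case. Your more elementary treatment is viable, but two of its steps need repair. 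First, Ore's theorem does not say that every element of $\sym_d$ is a commutator: the commutator subgroup of $\sym_d$ is $\alt_d$, so in the case $(g,k)=(1,0)$ you must additionally arrange that $\prod_i\sigma_i$ is even; with your prescribed cycle type (one fixed point plus $(d-1)/m_i$-many $m_i$-cycles) this imposes a further congruence condition on $d$ beyond $d\equiv 1\pmod{\mathrm{lcm}(m_i)}$, which can always be satisfied but must be checked. Second, in the one-puncture case $\rho(p_1)=(\prod_i\sigma_i)^{-1}$ is forced by the relation, so there is no free generator to make a $d$-cycle; transitivity must instead be arranged by a suitable choice of the conjugates of the $\sigma_i$ themselves (or, as in the paper, by the Liebeck--Shalev transitivity estimate). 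With these points addressed, and with the genus-zero cocompact case handled by the Frobenius formula or Liebeck--Shalev as you indicate, the argument goes through.
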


\begin{proof}
Recall that the group $\Gamma$ admits a presentation of the form \eqref{pres_fuchsian}. In \cite{Liebeck_Shalev} Liebeck -- Shalev prove the following: let $C_i$ be the conjugacy class of an element of $\sym_n$ which has only $m_i$-cycles and $f_i$ fixed points, with $f_i$ bounded independently of $n$. Let $\mathbf C = (C_1, \ldots, C_s)$ and $\Hom_{\mathbf C}(\Gamma, \sym_n)$ the set of homomorphisms $\rho : \Gamma \to \sym_n$ such that $\rho(x_i) \in C_i$ (recall from \eqref{pres_fuchsian} that $x_i$ is an element of order $m_i$ in $\Gamma$). Then
\[
\left| \Hom_{\mathbf C}(\Gamma, \sym_n) \right| \sim (n!)^{1-\chi(\Gamma)} \cdot n^{O(1)}
\]
(loc. cit., Theorem 3.5) and an element in $\Hom_{\mathbf C}(\Gamma, \sym_n)$ is transitive with probability tending to 1 as $n \to +\infty$ (loc. cit., Theorem 4.4).

We choose all $f_i$ such that $1 \le f_i \le m_i$ (there is a unique possible choice for each $n$ and $i$). Let $\rho \in \Hom_{\mathbf C}(\Gamma, \sym_n)$ be transitive (such $\rho$ exists by the results presented above) and let $H$ be the subgroup $\mathrm{Stab}_\rho\{1\}$ and $O'$ the corresponding orbifold cover of $O$, with genus $g'$ and $k'$ cusps. Then we have that
\[
2g'+ k' = n \chi(O) - \sum_{i=1}^s \left( 1 - \frac 1{m_i} \right) f_i + 2 =  n \chi(O) - O(1)
\]
and the right-hand side goes to infinity, so taking $n$ large enough we can ensure $2g' + k \ge 4$, which implies either $k' \ge 2$ or $g' \ge 2$. Moreover, since $f_i \ge 1$ if follows from Lemma \ref{fixed_points_sing} that the group $H$ contains an element of order $m_i$ for each $i$. So this $H$ satisfies the conclusion of the lemma. 
\end{proof}
\section{Preliminaries}

Let us start our discussion of $3$-manifolds with some preliminary observations. Recall that a  group $G$ is called \emph{residually finite} if 
\[\bigcap_{\substack{H \triangleleft G \\ [G:H] < \infty}} H = \{e\},\]
where $e\in G$ denotes the unit element. It follows from work of Hempel~\cite{Hem}, together with the proof of the geometrization theorem, that  the fundamental group of a  $3$-manifold has this property:

\begin{theorem}\label{theorem_resfin} \cite{Hem} Let $M$ be a compact $3$-manifold, then $\pi_1(M)$ is residually finite.
\end{theorem}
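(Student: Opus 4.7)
The plan is to combine the prime and JSJ decompositions with the geometrization theorem in order to reduce the problem to the fundamental groups of geometric pieces, to verify residual finiteness of each of these by direct arguments, and then to recombine the resulting finite quotients coherently across the gluing tori.

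First I would reduce to the closed, irreducible, orientable case. Since a group containing a residually finite subgroup of finite index is itself residually finite (by a standard normal core argument), passing to the orientation double cover is harmless. For manifolds with non-empty boundary, one passes to the double after using the loop theorem to first replace any compressible boundary; the double is closed and $M$ includes $\pi_1$-injectively. By the Kneser--Milnor prime decomposition, the fundamental group of a closed orientable $3$-manifold is the free product of the fundamental groups of its prime factors, and Gruenberg's theorem that free products of residually finite groups are residually finite reduces to the prime case. The only closed orientable prime $3$-manifold which is not irreducible is $S^1\times S^2$, with $\pi_1 = \ZZ$, which is settled trivially.

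Next I would invoke geometrization. The JSJ decomposition splits each closed irreducible orientable $M$ along incompressible tori into pieces, each of which is Seifert fibered or hyperbolic, with Sol manifolds appearing as a geometric type when the JSJ decomposition is trivial. I would then verify residual finiteness of each vertex group directly: in the hyperbolic case, $\pi_1$ embeds as a discrete, finitely generated subgroup of $\PSL_2(\CC)$ and is therefore linear, hence residually finite by Malcev's theorem; in the Seifert fibered case, $\pi_1$ is a central $\ZZ$-extension of a $2$-orbifold group, and residual finiteness follows from a case analysis according to the underlying geometry (virtually a direct product with a surface group, virtually nilpotent, or a central $\ZZ$-extension of a Fuchsian group, each of which is easy to verify); in the Sol case, $\pi_1$ is virtually polycyclic and residually finite by Hirsch's theorem.

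The main obstacle is to combine these vertex-group quotients coherently across the JSJ tori. Here $\pi_1(M)$ is the fundamental group of a graph of groups with residually finite vertex groups and $\ZZ^2$ edge groups, and one must verify that this amalgam inherits residual finiteness. I would follow Hempel's strategy, whose crucial input is that each peripheral $\ZZ^2$ subgroup is \emph{separable} in its vertex group: for Seifert pieces this follows from Scott's subgroup separability theorem for surface groups, and for hyperbolic pieces from the separability of abelian/cusp subgroups. Given any $g \neq 1$ in $\pi_1(M)$, one puts $g$ in normal form relative to the graph of groups and iteratively chooses finite quotients of the vertex groups whose images of the adjacent tori agree along the gluing maps and in which the $g$-contributions remain non-trivial; the resulting fibre product yields a finite quotient of $\pi_1(M)$ in which $g$ survives, establishing residual finiteness.
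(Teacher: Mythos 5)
Your proposal is correct and is precisely the argument the paper points to: it does not prove Theorem~\ref{theorem_resfin} itself but cites Hempel's work together with geometrization, and your outline (prime decomposition plus Gruenberg, residual finiteness of the geometric pieces, and Hempel's gluing of compatible finite quotients across the JSJ tori using peripheral separability) is a faithful reconstruction of that proof.
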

\noindent Next, we make some elementary observations about certain simple $3$-manifolds.
\begin{proposition}\label{prop:specialones}
The $3$-manifolds $S^3$, $T^3$, $T^2\times I$, $S^1\times D^2$, $S^1\times S^2$, and $S^1\widetilde{\times} S^2$ are exceptional. The twisted $I$-bundle over the Klein bottle is not exceptional.
\end{proposition}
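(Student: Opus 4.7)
The six exceptional manifolds follow from elementary observations about their fundamental groups, while showing that the twisted $I$-bundle $N$ over the Klein bottle is not exceptional requires examining its double covers. For $S^3$ exceptionality is immediate since $\pi_1=1$. For $T^3$, $T^2\times I$, $S^1\times D^2$ the fundamental group is $\ZZ^k$ (with $k=3,2,1$) and the universal cover is $\RR^k\times F$ with $F\in\{\text{pt},I,D^2\}$; for any finite-index subgroup $H\le\ZZ^k$ the cover is $(\RR^k/H)\times F$, again homeomorphic to the base. For $S^1\times S^2$ and $S^1\widetilde\times S^2$ the fundamental group is $\ZZ$, so there is a unique subgroup of each index; a quick check with the universal cover $\RR\times S^2$ confirms the cover is the appropriate $S^2$-bundle over $S^1$ (for $S^1\widetilde\times S^2$ the type depends on the parity of $n$, but exactly one type appears for each $n$). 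In every case $e_n(M)\le 1$ for all $n$, so $e_n'(M)=1$.

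For the twisted $I$-bundle $N$ the plan is to exhibit two non-homeomorphic connected double covers. Recall $\pi_1(N)\cong\pi_1(K)=\langle a,b\mid aba^{-1}b\rangle$ and $H_1(K)\cong\ZZ\oplus\ZZ/2$, so there are exactly three nontrivial homomorphisms $\pi_1(N)\to\ZZ/2$. The homomorphism $\phi_1\colon a\mapsto 1,\,b\mapsto 0$ is the orientation character of $K$; its kernel equals $\pi_1(\partial N)=\langle a^2,b\rangle\cong\ZZ^2$, and the corresponding double cover of $N$ is the trivial $I$-bundle $T^2\times I$. Because $\phi_1$ vanishes on $\pi_1(\partial N)$, the boundary torus of $N$ lifts to two disjoint copies, matching the two boundary tori of $T^2\times I$.

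Next consider $\phi_2\colon a\mapsto 0,\,b\mapsto 1$. Its restriction to $\ker\phi_1=\langle a^2,b\rangle$ sends $a^2\mapsto 0$ and $b\mapsto 1$, which is nontrivial, so the boundary $T^2$ of $N$ lifts to a single connected double cover (again a torus) in the $\phi_2$-cover of $N$. Hence this double cover of $N$ has exactly one torus boundary component and cannot be homeomorphic to $T^2\times I$; this yields $e_2(N)\ge 2$ and so $N$ is not exceptional. The only step requiring care is the identification $\pi_1(\partial N)=\ker\phi_1$, which follows from the fact that $\partial N$ is the orientation double cover of $K$; beyond that, the argument is routine bookkeeping and I do not foresee any serious obstacle.
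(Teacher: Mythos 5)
Your proposal is correct and follows essentially the same route as the paper, which dismisses the six exceptional cases as an elementary exercise (noting that cyclic fundamental group suffices for most of them) and exhibits the two non-homeomorphic double covers of the twisted $I$-bundle over the Klein bottle. The only cosmetic difference is that the paper identifies the second double cover as the twisted $I$-bundle itself, whereas you distinguish it from $T^2\times I$ by counting boundary tori; both are valid and equally quick.
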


\begin{proof}
It is an elementary exercise to verify that the manifolds mentioned in the first sentence are exceptional. 
For example, note that any manifold with cyclic fundamental group is exceptional.
Finally note that the twisted $I$-bundle over the Klein bottle has two $2$-fold covers, one of which is again homeomorphic to the twisted $I$-bundle over the Klein bottle and the other is homeomorphic to $T^2\times I$. Thus it is not exceptional.
\end{proof}

We conclude the section with the following elementary observation, which uses the fact that our covers need not be regular.

\begin{lemma}\label{lem:nonexceptional-cover}
If a manifold $M$ has a finite-sheeted cover $p\colon \widehat{M}\rightarrow M$ such that $\widehat{M}$ is not exceptional, then $M$ is not exceptional.
\end{lemma}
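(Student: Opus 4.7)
The plan is to lift non-homeomorphic covers of $\widehat{M}$ to non-homeomorphic covers of $M$ by composing with $p$. Since the exceptionality condition only constrains the homeomorphism type of the total space (not the covering data), this composition preserves distinctness for free.

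More precisely, assume $\widehat{M}$ is not exceptional. Then there exists some $n \in \N$ and two connected degree $n$ covers $q_1 \colon N_1 \to \widehat{M}$ and $q_2 \colon N_2 \to \widehat{M}$ such that $N_1$ and $N_2$ are not homeomorphic as topological spaces. Setting $d := \deg(p)$, the compositions $p \circ q_i \colon N_i \to M$ are connected covers of $M$ of the same degree $nd$. Since homeomorphism of the total spaces does not depend on the covering map, $N_1$ and $N_2$ remain non-homeomorphic when viewed as covers of $M$. Hence $e_{nd}(M) \geq 2$, so $M$ is not exceptional.

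There is essentially no obstacle here; the lemma is a direct consequence of the definitions together with the transitivity of finite coverings. The one point worth emphasising in the write-up is that we work with the unordered covering space (i.e.\ just the homeomorphism type of the total space), so no compatibility with the covering projections needs to be checked.
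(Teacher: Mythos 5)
Your argument is correct and is exactly the intended one: the paper states this as an elementary observation without proof, and the key point (which the paper flags in the surrounding text) is that composing the covers $q_i$ with $p$ is legitimate precisely because we count all finite covers rather than only regular ones, the composite of finite-sheeted covers of a manifold being again a finite-sheeted cover. Nothing is missing.
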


\section{The hyperbolic case}

\noindent In this section, we prove the following result.

\begin{proposition} \label{proposition_hypcase}
  Hyperbolic $3$-manifolds of finite volume are not exceptional. 
\end{proposition}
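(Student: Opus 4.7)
The plan is to use Mostow rigidity to convert the topological statement into a question about subgroups of $\pi_1(M)$, then exploit virtual largeness to produce factorially many finite-index subgroups, and finally extract among them many distinct isomorphism types.

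First, by Mostow rigidity (combined with Waldhausen's theorem to handle the cusped case), two finite covers of $M$ are homeomorphic if and only if their fundamental groups are isomorphic; hence $e_n(M) = e_n(\pi_1(M))$, and it suffices to exhibit, for infinitely many $n$, two index-$n$ subgroups of $\pi_1(M)$ that are not abstractly isomorphic.

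Next, I would invoke the virtual largeness theorem of Cooper--Long--Reid~\cite{Cooper_Long_Reid_VH} (or equivalently Agol's virtual fibering theorem~\cite{Ago} together with standard tricks) to pass to a finite cover $\widehat{M}$ whose fundamental group admits an epimorphism $\phi \colon \pi_1(\widehat{M}) \twoheadrightarrow F_2$ onto a nonabelian free group. By Lemma~\ref{lem:nonexceptional-cover} it then suffices to show $\widehat{M}$ is not exceptional. Pulling back the $\sim n\cdot n!$ index-$n$ subgroups of $F_2$ along $\phi$ endows $\pi_1(\widehat{M})$ with factorial subgroup growth, and the counting arguments in~\cite{BGLM} and~\cite[Section 5.2]{BGLS} then upgrade this to a lower bound of the form $e_n(\pi_1(\widehat{M})) \geq (n!)^{\alpha}$ for some $\alpha>0$ and all large $n$.

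The main obstacle will be converting factorially many subgroups into factorially many isomorphism classes, since a single isomorphism type could a priori be realised by many non-conjugate embeddings. To address this I would either invoke the embedding-count argument of~\cite[Section 5.2]{BGLS}, which bounds the number of subgroups lying in a fixed isomorphism class, or work directly with the five-term exact sequence for $1\to \ker\phi \to \phi^{-1}(K)\to K\to 1$, which expresses $H_1(\phi^{-1}(K))$ in terms of $H_1(K)$ and the $K$-coinvariants of $(\ker\phi)^{\mathrm{ab}}$, and show that this abelian invariant (e.g. its rank or torsion) genuinely varies with the index-$n$ subgroup $K\le F_2$. Either route produces infinitely many $n$ for which $e_n(\pi_1(\widehat{M}))>1$, completing the proof.
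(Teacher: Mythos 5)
Your proposal is correct in outline, but it follows the route that the paper cites rather than the one it writes out in detail. The paper explicitly notes that Proposition~\ref{proposition_hypcase} ``follows by combining largeness with either \cite{Zimmermann} or \cite{BGLM,BGLS}'' --- which is precisely your argument: pass to a large finite cover via \cite{Ago,Cooper_Long_Reid_VH}, reduce to that cover by Lemma~\ref{lem:nonexceptional-cover}, pull back the $\sim n\cdot n!$ index-$n$ subgroups of $F_2$, use the counting argument of \cite{BGLM} and \cite[Section 5.2]{BGLS} to extract $(n!)^{\alpha}$ isomorphism classes, and let Mostow--Prasad rigidity convert non-isomorphic subgroups into non-homeomorphic covers. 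What the paper actually proves in full is different and deliberately avoids largeness: Proposition~\ref{lattices} uses Mostow rigidity to reduce to distinguishing the quotient orbifolds by a metric invariant, the systole, and then uses strong approximation and the lower $p$-series of a uniform pro-$q$ completion to produce, for a fixed semisimple element $\gamma$ of infinite order, two sequences of equal-index subgroups --- one intersecting to $\{e\}$, so with systole tending to infinity, and one intersecting to $\overline{\langle\gamma\rangle}$, so with systole bounded by $\ell(\gamma)$. That argument yields only two non-isomorphic subgroups per (sparse set of) index, but it applies to irreducible lattices in all semisimple Lie groups not locally isomorphic to $\mathrm{PGL}_2(\RR)$; yours yields the much stronger factorial lower bound on $e_n'$ at the cost of invoking the deepest available input (virtual largeness) plus the BGLM/BGLS embedding count. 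The paper also sketches a middle road you might find cleaner: a class $\phi \in H^1$ of infinite order in a finite cover gives subgroups $\phi^{-1}(n\ZZ)$ of index $n$ with bounded systole, residual finiteness gives subgroups with arbitrarily large systole, and Mostow rigidity finishes.

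One caution on your final step: the citation to \cite[Section 5.2]{BGLS} is the load-bearing part, and it does suffice (the paper relies on it for the same quantitative claim in Section~\ref{quant}). Your proposed fallback --- tracking $H_1(\phi^{-1}(K))$ through the five-term exact sequence --- should not be treated as an interchangeable alternative: the $K$-coinvariants of $(\ker\phi)^{\mathrm{ab}}$ are not readily controlled, and there is no a priori reason this abelian invariant separates even two of the index-$n$ subgroups, let alone factorially many.
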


As mentioned in the introduction, this follows by combining largeness (\cite{Ago}) with either \cite{Zimmermann} or \cite{BGLM,BGLS}, which yields a much stronger quantitative result. We also provide an independent proof which does not use largeness, and works in the more general setting of irreducible lattices in almost all semisimple Lie groups (Proposition \ref{lattices}). Finally we show that in any hyperbolic $3$-manifold group one can also find non-isomorphic {\em normal} subgroups with the same index (Proposition~\ref{main_normal}). 


\subsection{Non-exceptionality of lattices in Lie groups}

Let $G$ be a semisimple Lie group and let $X$ be the symmetric space associated to $G$ (for example, $G = \mathrm{PGL}_2(\CC)$ and $X = \HH^3$). Then for any discrete subgroup $\Gamma \le G$, the quotient $\Gamma \bs X$ is a complete Riemannian orbifold locally isometric to quotients of $X$ by finite subgroups of $G$ (in particular, if $\Gamma$ is torsion-free then $\Gamma \bs X$ is a manifold). We will call such orbifolds {\em $X$-orbifolds}. 

The Mostow-Prasad rigidity theorem \cite{Mos, Mos2, Prasad_rigidity,Margulis_book} states that if $G$ is not locally isomorphic to $\mathrm{PGL}_2(\RR)$, then two irreducible lattices $\Gamma_1$ and $\Gamma_2$ in $G$ are isomorphic as abstract groups if and only if the orbifolds $\Gamma_i \bs X$ are isometric to each other. In particular the metric invariants of $\Gamma \bs X$ are an isomorphism invariant of $\Gamma$. We will be using the {\em systole} to distinguish between subgroups: given an $X$-orbifold $M$ this is defined as the infimum of lengths of closed geodesics on $M$, and we will denote it by $\mathrm{sys}(M)$. Note that it follows from the Margulis lemma that $\sys(M)$ is positive if $M$ has finite volume. 

The systole of $\Gamma \bs X$ can be computed from the action of $\Gamma$ on $X$. If $g \in G$ is an element whose semisimple part does not belong to a compact subgroup of $G$, then the minimal translation
\[
\ell(g) := \inf_{x \in X} d_X(x, gx)
\]
is positive. Then, denoting by $\Gamma_{ah}$ the set of such elements in $\Gamma$, we have: 
\begin{equation} \label{systole_tranlength}
  \sys(M) = \min_{\gamma\in\Gamma_{ah}} \ell(\gamma).
\end{equation}
Note that if $\Gamma$ is cocompact, then $\Gamma_{ah}$ is the set of semisimple elements of infinite order in $\Gamma$. 

\medskip

\noindent We will now prove the following result, of which Proposition \ref{proposition_hypcase} is a special case. 

\begin{proposition} \label{lattices}
  Let $\Gamma$ be an irreducible lattice in a semisimple linear Lie group not locally isomorphic to $\mathrm{PGL}_2(\RR)$. Then $\Gamma$ is not exceptional. 
\end{proposition}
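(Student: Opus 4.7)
The plan is to use Mostow--Prasad rigidity to translate the abstract-isomorphism problem into a geometric one, and then to use the systole as the distinguishing invariant. Concretely, any finite-index subgroup $\Gamma' \le \Gamma$ is itself an irreducible lattice in $G$, so by the rigidity theorem recalled above, two such subgroups $\Gamma_1,\Gamma_2$ are abstractly isomorphic if and only if $\Gamma_1 \bs X$ and $\Gamma_2 \bs X$ are isometric. In particular $\sys(\Gamma' \bs X)$ is an isomorphism invariant of $\Gamma'$, so it is enough to produce, for arbitrarily large $n$, two subgroups of $\Gamma$ of index $n$ whose associated quotient orbifolds have different systoles.

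\textbf{Enlarging the systole.} Let $\ell_0 := \sys(\Gamma \bs X)$. By \eqref{systole_tranlength} this is realized by some $\gamma \in \Gamma_{ah}$, and the identity $\ell(\delta^k) = k\, \ell(\delta)$ for $\delta \in \Gamma_{ah}$ forces any such minimizer to be primitive in $\Gamma_{ah}$ (a nontrivial proper power would have translation length at least $2\ell_0$). The set of primitive elements of translation length exactly $\ell_0$ is a finite union of $\Gamma$-conjugacy classes: in the cocompact case this is a standard finiteness statement, and in the finite-volume non-cocompact case it follows from the fact that every closed geodesic of $\Gamma \bs X$ lies in the compact thick part. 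Choose representatives $\gamma_1,\ldots,\gamma_s$. By Malcev's theorem $\Gamma$ is residually finite, so intersecting finitely many kernels of finite quotients we find $\Lambda \triangleleft \Gamma$ of finite index with $\Lambda \cap \gamma_i^\Gamma = \emptyset$ for all $i$. Then no element of $\Lambda$ has translation length $\ell_0$, so $\ell_1 := \sys(\Lambda \bs X) > \ell_0$.

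\textbf{Producing same-index pairs.} Put $m := [\Gamma:\Lambda]$ and $\Gamma_\circ := \langle \Lambda, \gamma_1 \rangle$, a finite-index subgroup of $\Gamma$ containing the systole-realizing element $\gamma_1$, and set $a := [\Gamma_\circ : \Lambda]$. Every finite-index subgroup $\Delta_1 \le \Gamma_\circ$ with $\gamma_1 \in \Delta_1$ satisfies $\sys(\Delta_1 \bs X) = \ell_0$, while every finite-index subgroup $\Delta_2 \le \Lambda$ satisfies $\sys(\Delta_2 \bs X) \ge \ell_1 > \ell_0$; by Step 1 the groups $\Delta_1$ and $\Delta_2$ are then never abstractly isomorphic. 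The equality $[\Gamma:\Delta_1] = [\Gamma:\Delta_2]$ reduces to $[\Gamma_\circ : \Delta_1] = a\cdot[\Lambda : \Delta_2]$. Since both $\Gamma_\circ$ and $\Lambda$ are infinite, finitely generated, and residually finite, one can choose $\Delta_2 \le \Lambda$ of arbitrarily large index and then build a matching $\Delta_1$ by pulling back a subgroup from a finite quotient of $\Gamma_\circ$ in which $\gamma_1$ has sufficiently large order. This produces infinitely many values of $n$ admitting two non-isomorphic index-$n$ subgroups of $\Gamma$, whence $\Gamma$ is not exceptional.

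\textbf{Main obstacle.} The conceptual core is the passage from algebra to geometry via rigidity and the choice of the systole as an invariant; the principal technical input is the finiteness of short conjugacy classes in the non-cocompact case, which is controlled by the thick--thin decomposition and is therefore not a serious obstruction. The fiddliest part is the index-matching in Step 3, and one could bypass the explicit construction by a pigeonhole argument on the known super-polynomial lower bound for $s_n(\Gamma)$: for infinitely many $n$ the set of index-$n$ subgroups must be partitioned non-trivially by the two systole regimes. Finally, the hypothesis that $G$ is not locally isomorphic to $\mathrm{PGL}_2(\RR)$ is precisely what allows us to invoke Mostow--Prasad rigidity; Fuchsian lattices admit Teichm\"uller deformations and are handled separately in Proposition~\ref{fuchsian_distinct}.
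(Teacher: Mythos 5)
Your overall strategy -- Mostow--Prasad rigidity plus the systole as the distinguishing invariant -- is exactly the paper's, and your first two steps are sound: the minimizers of translation length form finitely many conjugacy classes, residual finiteness yields a finite-index normal $\Lambda$ with $\sys(\Lambda \bs X) > \ell_0$, and any finite-index subgroup containing $\gamma_1$ has systole exactly $\ell_0$. The problem is that the step you label "fiddliest" is in fact the entire technical content of the proposition, and neither of your proposed ways of handling it works. To match indices you need, for a single common value of $n$, a subgroup of index $n$ containing $\gamma_1$ and a subgroup of index $n$ inside $\Lambda$. But a finitely generated residually finite group need not have finite-index subgroups of prescribed index: for higher-rank lattices (which are very much in the scope of this proposition) the congruence subgroup property severely constrains the finite quotients, so "pulling back a subgroup from a finite quotient of $\Gamma_\circ$ in which $\gamma_1$ has sufficiently large order" gives you no control over which indices occur, and there is no a priori reason that the sets of indices realized on the two sides of your dichotomy ever intersect. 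The pigeonhole fallback fails for the same reason: a lower bound on $s_n(\Gamma)$ tells you there are many index-$n$ subgroups, not that for some $n$ they are split between the two systole regimes -- it is logically consistent with all index-$n$ subgroups lying in one regime for every $n$. (The paper's sketch of this "easy" version of the argument, just after Theorem~\ref{theorem_virtbetti}, only works because it assumes $b_1(\Gamma) > 0$, so that cyclic covers realize every index; that hypothesis is unavailable here.)

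What the paper actually does to close this gap is arithmetic: after conjugating $\Gamma$ into $\GL_d(F)$ for a number field $F$ (local rigidity), it uses Nori--Weisfeiler strong approximation to embed $\Gamma$ with dense image of finite index in a uniform pro-$q$ group $H_q$, and then exploits the lower $q$-series of $H_q$ (Lemma~\ref{kill_element}) to manufacture, for every $k$, two open subgroups of \emph{exactly} the same index -- one a term of the lower $q$-series (so the intersections over $k$ shrink to $\{e\}$ and the systoles of the corresponding covers tend to infinity), the other of the form $\langle \gamma\rangle P_{ck+1}$ (so it always contains the fixed semisimple $\gamma$ and its systole stays $\le \ell(\gamma)$). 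The uniform structure is what makes the index bookkeeping come out exactly right ($q^{(c-1)k}$ on both sides). So your proof needs either this strong-approximation input or some other mechanism guaranteeing matched indices; as written, the argument does not establish the existence of even one pair of non-isomorphic subgroups of equal index.
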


\begin{proof}
  Let \( G \) be a semisimple linear Lie group as in the statement, so we may assume that $G < \GL_d(\mathbb R)$ for some $d$ and let \( \Gamma \) be a lattice in \( G \). It is a standard consequence of local rigidity of \( \Gamma \), which holds under the condition that \( G \) not be locally isomorphic to \( \mathrm{PGL}_2(\RR) \), that we may conjugate $G$ so that there exists a number field $F$ such that $\Gamma < \GL_d(F)$ (the proof given for \cite[Theorem 3.1.2]{MR_book} in the cocompact case adapts immediately to all other groups). Let $\mathbf H$ be the Zariski closure of $\Gamma$ in the  $\QQ$-algebraic group obtained by Weil restriction of the linear $F$-algebraic group \( \GL_d(F) \) to \( \QQ \). 
  By passing to a finite index subgroup if necessary, we may assume that every finite index subgroup of $\Gamma$ has Zariski closure equal to $\mathbf H$. Indeed, every chain of finite index algebraic subgroups $\ldots < \Gamma_{i+1} < \Gamma_i < \ldots < \Gamma$ is necessarily finite. So, a chain of finite index subgroups so that the Zariski closures  are strictly contained in each other necessarily terminates after a finite number of steps and we may take the last term.

  By finite generation of $\Gamma$ there exists a finite set $S$ of rational primes such that $\Gamma \subset \mathbf H\left(\ZZ[p^{-1}, p \in S]\right)$. For the rest of the proof we will fix a rational prime $q \not\in S$. Thus we can define the group of $\ZZ_q$-points, $H_q = \mathbf H(\ZZ_q)$. Nori-Weisfeiler strong approximation \cite{Weisfeiler} implies that we can choose $q$ so that the closure of $\Gamma$ in $H_q$ is of finite index. Since $H_q$ is $q$-adic analytic we may assume that it is a uniform pro-$q$ subgroup (cf. \cite[Theorem 8.1]{DdSMS}), replacing $\Gamma$ by a finite index subgroup if necessary. 

  \medskip

\noindent Now we prove the following lemma.

  \begin{lemma} \label{kill_element}
    Let $p$ be a prime, $H$ a uniform pro-$p$ group, and $\gamma \in H$. There exists a sequence $(H_1(k), H_2(k))$ of pairs of open subgroups of $H$ such that $|H/H_1(k)| = |H/H_2(k)|$, $H_i(k+1) \subset H_i(k)$ and
    \[
    \bigcap_{k \ge 1} H_1(k) = \{e\}, \: \bigcap_{k \ge 1} H_2(k) = \overline{\langle\gamma\rangle}.
    \]
  \end{lemma}

  \begin{proof}
    Let $P_k(H)$ be the lower $p$-series of $H$ (see \cite[Definition 1.15]{DdSMS}). Replacing $H$ by some $P_k(H)$ we may assume that $\gamma \in H \setminus P_2(H)$. Uniformity of $H$ implies that, independently of $k \ge 1$, the group $P_k(H)/P_{k+1}(H)$ is an $\FF_p$-vector space of fixed dimension $c$ so that
    \[
    |H / P_{k+1}(H)| = p^{ck}.
    \]
    On the other hand, we have $\gamma^{p^k} \in P_{k+1}(H) \setminus P_{k+2}(H)$, so we get that
    \[
    \left| H / \left(\langle\gamma\rangle P_{k+1}(H) \right)\right| = p^{(c-1)k}.
    \]
    We define
    \[
    H_2(k) = \langle \gamma \rangle P_{c\cdot k+1}
    \]
    which satisfies that $H_2(k+1) \subset H_2(k)$ and $\bigcap_{k \ge 1} H_2(k) = \overline{\langle\gamma\rangle}$. Let also 
    \[
    H_1(k) = P_{(c-1)\cdot k+1}
    \]
    Then we have that
    \[
    |H/H_1(k)| = p^{(c-1)k} = p^{ck}/p^k = |H/H_2(k)|. 
    \]
    On the other hand, $P_{k-l_k+1}(H) \supset H_1(k)$ so that $\bigcap_{k \ge 1} H_1(k) = \{e\}$. 
  \end{proof}

  Now let $\gamma \in \Gamma$ be a semisimple element in $\Gamma$ of infinite order. Applying the lemma to $H_q$ and~$\gamma$ we get two sequences of subgroups
  \[
  \Gamma_1(q,k) = \Gamma \cap H_1(k+1), \: \Gamma_2(q,k) = \Gamma \cap H_2(k+1)
  \]
  which satisfy the same properties as $H_i$. In particular, for any finite set $\Sigma \subset \Gamma \setminus \{1\}$, we have $\Sigma \cap \Gamma_1(q,k) = \emptyset$ for large enough $k$. Applying this to the finite sets
  \[
  \Sigma_R = \{ \gamma \in \Gamma_{ah}:\: \ell(\gamma) \le R\}
  \]
  with $R$ going to infinity we see, using the formula \eqref{systole_tranlength}, that: 
  \[
  \lim_{k\to+\infty}\sys(\Gamma_1(q,k) \bs X) = +\infty.
  \]
  On the other hand, we have $\gamma \in \Gamma_2(k)$ for all $k > 0$ and it follows that
  \[
  \forall k > 0 : \: \sys(\Gamma_2(q,k) \bs X) \le \ell(\gamma)
  \]
  and in particular, for any large enough $k$, the systoles of the $X$-orbifolds $\Gamma_1(q,k) \bs X$ and $\Gamma_2(q,k) \bs X$ are different. It finally follows from Mostow's rigidity theorem, as observed before the proposition, that the subgroups $\Gamma_1(q,k)$ and $\Gamma_2(q,k)$, which have the same index, cannot be isomorphic to each other for large enough $k$. 
\end{proof}



\subsection{Agol-Wise's theorem and a quantitative result}\label{quant}

Hyperbolic $3$-manifolds have finite degree covers with positive Betti number, which was proved by Agol, based on the work of Kahn-Markovic, Wise, and many others. In fact more is true; we have the following properties (see \cite[Theorem~9.2]{Ago} for the closed case and \cite[Theorem~1.3]{Cooper_Long_Reid_VH} for the case with toroidal boundary), in order of strength. 

\begin{theorem}\label{theorem_virtbetti}
  Let $M$ be a hyperbolic $3$-manifold with finite volume. Then there exists:
  \begin{enumerate}[font=\normalfont]
  \item \label{pos_virtbetti} a finite cover $\widehat{M}\to M$ so that $b_1\left(\widehat{M}\right) > 0$;

  \item \label{infinite_virtbetti} for any $r \ge 1$, a finite cover $\widehat{M}\to M$ so that $b_1\left(\widehat{M}\right) \ge r$;

  \item \label{large} a finite cover $\widehat{M}\to M$ so that $\pi_1(M)$ surjects onto a non-abelian free group.
  \end{enumerate}
\end{theorem}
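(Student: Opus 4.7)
The plan is to derive all three items from the chain of implications (\ref{large}) $\Rightarrow$ (\ref{infinite_virtbetti}) $\Rightarrow$ (\ref{pos_virtbetti}), and then concentrate on proving (\ref{large}), i.e.\ largeness. The easy implications go as follows: if $\pi_1(\widehat{M})$ surjects onto a non-abelian free group $F_k$ with $k\ge 2$, then $F_k$ has finite-index subgroups of arbitrarily large first Betti number (since the rank of an index-$n$ subgroup of $F_k$ is $n(k-1)+1$), and pulling back along the surjection gives finite covers of $\widehat{M}$ with arbitrarily large $b_1$. So the work is in establishing (\ref{large}).

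For largeness, the plan is to use the cubulation / virtual specialness package. First, by the Kahn--Markovic surface subgroup theorem (and its analogues in the cusped case going back to Cooper--Long--Reid and later refinements), the manifold $M$ contains a sufficiently rich family of essential immersed closed quasi-Fuchsian surfaces. Second, by Sageev's construction together with the Bergeron--Wise work, this family of codimension-one subgroups gives a proper (and, in the closed case, cocompact) action of $\pi_1(M)$ on a CAT(0) cube complex; in the cusped case one uses the relatively hyperbolic version so that peripheral subgroups act elliptically. Third, Agol's theorem (building on Wise's malnormal special quotient theorem and the theory of special cube complexes) implies that such a cubulated (relatively) hyperbolic group is virtually (compact) special, hence virtually embeds in a right-angled Artin group. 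From here, largeness follows: any non-cyclic RAAG surjects onto $F_2$ (e.g.\ a non-adjacent pair of generators gives a retraction onto a copy of $F_2$), so a deep enough finite-index subgroup of $\pi_1(M)$ admits a surjection onto a non-abelian free group. The cusped case is handled either by peripheral filling plus the closed case, or directly via Cooper--Long--Reid together with the Agol--Groves--Manning relatively hyperbolic Dehn filling.

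The hard part, by a large margin, is Step 3: Agol's theorem that a cubulated (relatively) hyperbolic group is virtually special. This uses virtually the entire machinery developed by Wise around special cube complexes, the hierarchy theory for quasiconvex subgroups, and the malnormal special quotient theorem, together with Agol's inductive argument via colourings of the cube complex. The other two ingredients (Kahn--Markovic surfaces and Sageev's construction) are also substantial but are by now standard black boxes. Once Step~3 is granted, producing covers with any prescribed value of $b_1$ and producing free quotients are both soft consequences of the RAAG structure, so no further geometric input about $M$ is required.
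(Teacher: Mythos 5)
The paper gives no proof of this theorem at all: it is stated as a citation of \cite[Theorem~9.2]{Ago} for the closed case and \cite[Theorem~1.3]{Cooper_Long_Reid_VH} for the cusped case, which is precisely the Kahn--Markovic/Sageev/Wise/Agol machinery you outline. Your reduction (\ref{large}) $\Rightarrow$ (\ref{infinite_virtbetti}) $\Rightarrow$ (\ref{pos_virtbetti}) via finite-index subgroups of $F_k$ and your sketch of the cubulation and virtual-specialness route to largeness are accurate and amount to the same approach, just with the black boxes opened.
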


We note that (\ref{pos_virtbetti}) can be used to give a proof of Proposition~\ref{proposition_hypcase} which is similar but simpler than that of Proposition \ref{lattices}: if $H^1(\Gamma)$ contains a class $\phi$ of infinite order then the systole of the index $n$ subgroup $\Gamma_n:=\phi^{-1}(n\ZZ)$ stays bounded as $n \to +\infty$. As $\Gamma$ is residually finite, there exists a sequence of subgroups $\Gamma_m'$ whose systoles tend to infinity. For $m$ large enough it thus follows from Mostow rigidity that the subgroups $\Gamma_{[\Gamma : \Gamma_m']}$ and $\Gamma_m'$ are both of index $[\Gamma:\Gamma_m']$ and not isomorphic to each other. 

\medskip

In fact a much stronger quantitative result holds. The strongest result (\ref{large}), together with an argument due to Lubotzky and Belolipetsky-Gelander-Lubotzky-Shalev (\cite{BGLM}, \cite[Section 5.2]{BGLS}) shows that the number $e_d(M)$ of pairwise non-isometric covers of a hyperbolic manifold of finite volume satisfies 
\[
\limsup_{d \to +\infty} \frac{\log e_d(M)}{d\log(d)} > 0.
\]


\subsection{Regular covers of hyperbolic manifolds}

In this subsection we use (\ref{infinite_virtbetti}), with $r = 2$, to prove the following result about regular covers. 

\begin{proposition} \label{main_normal}
  Let $\Gamma$ be the fundamental group of a hyperbolic $3$-manifold (complete of finite volume). Then there exists sequences $c_n, d_n \to +\infty$ such that for each $n$ we can find at least $c_n$ normal subgroups of index $d_n$ in $\Gamma$, which are pairwise non-isomorphic.
\end{proposition}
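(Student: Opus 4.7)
The plan is to combine Theorem \ref{theorem_virtbetti}(ii) with Mostow rigidity and systole comparison, following the alternative proof of Proposition \ref{proposition_hypcase} sketched right after that theorem.

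\emph{Step 1 (Setup).} By Theorem \ref{theorem_virtbetti}(ii), find a finite-index subgroup $\widetilde\Gamma \leq \Gamma$ with $b_1(\widetilde\Gamma) \geq r$, for an $r$ to be chosen. Replacing $\widetilde\Gamma$ by its normal core in $\Gamma$, I obtain $\Gamma_0 \lhd \Gamma$ of finite index, and the transfer map ensures that $b_1(\Gamma_0) \geq b_1(\widetilde\Gamma) \geq r$. I set $Q := \Gamma/\Gamma_0$ and $L := H_1(\Gamma_0;\ZZ)/\mathrm{torsion}$, a $\ZZ[Q]$-module of rank $r$ under the conjugation action of $Q$.

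\emph{Step 2 (Construction of many normal subgroups of the same index).} Any $Q$-invariant subgroup $M \leq L$ of finite index pulls back via the abelianization $\pi \colon \Gamma_0 \twoheadrightarrow L$ to a subgroup $K_M$ which is automatically normal in $\Gamma$ (by $Q$-equivariance of $\pi$), of index $|Q|\cdot[L:M]$. For a prime $p$ coprime to $|Q|$, Maschke's theorem makes $L/pL$ into a semisimple $\FF_p[Q]$-module of $\FF_p$-dimension $r$. Choosing $r$ larger than the number of isomorphism classes of simple $\FF_p[Q]$-modules (which is bounded by $|Q|$), the pigeonhole principle forces some irreducible $V$ to appear in $L/pL$ with multiplicity $m \geq 2$, and its isotypic component $V^m$ then contains at least $(|D|^m-1)/(|D|-1) \geq p$ distinct $Q$-invariant $\FF_p$-subspaces of any given codimension (where $D = \mathrm{End}_Q(V)$). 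These yield $\geq p$ normal subgroups of $\Gamma$ of a common index $d_n := |Q|\cdot p^{\dim V}$.

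\emph{Step 3 (Non-isomorphism via systole).} To distinguish the resulting $K_M$'s, I invoke Mostow rigidity: if $K_M \cong K_{M'}$ as abstract groups, then $K_M \bs \HH^3$ and $K_{M'} \bs \HH^3$ are isometric, and in particular have equal systoles. Since $K_M \cap \Gamma_0 = \pi^{-1}(M)$, the systole $\sys(K_M\bs\HH^3) = \min\{\ell(\gamma) : \gamma \in K_M\setminus\{1\}\}$ is determined by which short closed geodesics of $\Gamma_0\bs\HH^3$ have image in $M$. Enumerate the short elements $\gamma_1,\gamma_2,\ldots$ of $\Gamma_0$ with non-trivial projection to the $V^m$-isotypic component by increasing length; for $p$ large their reductions modulo $p$ are distinct and nonzero, and as $M$ varies over our family of invariant subspaces, the smallest index $i(M)$ with $\pi(\gamma_i)\in M$ takes arbitrarily many values. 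Letting $n$, and hence $p$, tend to infinity, this yields $c_n\to\infty$ pairwise distinct systoles and hence $c_n$ pairwise non-isomorphic normal subgroups of index $d_n \to \infty$.

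The main obstacle is Step 2: producing multiplicity $m \geq 2$ requires $r > |Q|$, while both $r$ and $|Q|$ are tied to the cover we chose and cannot easily be decoupled (since the first $\ell^2$-Betti number of a hyperbolic $3$-manifold vanishes, preventing $b_1(\Gamma_0)$ from being much larger than $[\Gamma:\Gamma_0]$). This is overcome by a case distinction on the $\QQ[Q]$-module structure of $L\otimes\QQ$: if some irreducible summand already has multiplicity $\geq 2$ over $\QQ$ (for instance when $b_1(\Gamma) \geq 2$, since the trivial character then appears in $L\otimes\QQ$ with multiplicity $b_1(\Gamma)$ by transfer), the argument runs as above; otherwise one invokes the splitting behaviour of rational primes in the endomorphism fields $\mathrm{End}_{\QQ[Q]}(V_i)$ of the irreducible summands to exhibit multiplicity in $L/pL$ for infinitely many primes $p$. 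The Mostow-rigidity-plus-systole argument of Step 3 then proceeds identically.
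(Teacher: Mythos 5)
Your architecture is genuinely different from the paper's, and it contains a gap at its crucial point, Step 2. To get $c_n\to\infty$ you need, for infinitely many primes $p$, an irreducible $\FF_p[Q]$-constituent of $L/pL$ occurring with multiplicity $\ge 2$; if instead $L/pL$ is multiplicity-free, its $Q$-invariant subgroups of any fixed index number at most $2^{(\#\text{constituents})}$, a bound independent of $p$, and the construction produces only boundedly many normal subgroups per index. None of your three routes to multiplicity is available. (a) Taking $r>|Q|$ is not possible: Agol's theorem gives a cover with $b_1\ge r$ but no control of $b_1$ against the index, and you correctly note one cannot decouple them. (b) $b_1(\Gamma)\ge 2$ can simply fail (e.g.\ hyperbolic homology spheres), so the trivial character need not repeat. (c) The splitting of $p$ in $\mathrm{End}_{\QQ[Q]}(V_i)$ does \emph{not} create multiplicity: for $p\nmid|Q|$ reduction mod $p$ is controlled by Brauer characters, and splitting of $p$ refines the Frobenius orbits of the absolutely irreducible constituents, producing \emph{several pairwise non-isomorphic} $\FF_p[Q]$-irreducibles, each still with the same multiplicity (up to Schur indices) as over $\QQ$. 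So in the multiplicity-free case over $\QQ$ your fallback yields nothing. Separately, Step 3 asserts without proof that the function $M\mapsto i(M)$ takes arbitrarily many values; for that you need the finitely many short geodesics to separate the family of invariant subspaces, which requires an argument (and could fail for a poorly chosen family).

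The paper avoids this trap by \emph{not} demanding normality in the ambient group at the construction stage. It first passes to a finite-index normal subgroup $\Gamma_0\lhd\Gamma$ with $b_1(\Gamma_0)\ge 2$ and produces, for each $n$, at least $n$ normal subgroups of $\Gamma_0$ with quotient $\ZZ/n$ (this is exactly your "multiplicity $\ge2$ of the trivial module" case, applied inside $\Gamma_0$ where it always holds). It distinguishes them not by systoles but by showing, via Mostow rigidity together with a commensurator argument (Margulis' criterion in the non-arithmetic case, congruence closures and strong approximation in the arithmetic case), that only boundedly many can be conjugate in the ambient Lie group. Finally it intersects the $\Gamma$-conjugates of these subgroups to obtain subgroups normal in $\Gamma$; this destroys the uniformity of the index, which is then recovered by bounding all the indices by a divisor of $r!\cdot n^{lr}$ and pigeonholing against the divisor function. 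If you want to salvage your approach, you would need to supply the intersection-of-conjugates step (or an argument forcing a repeated constituent), at which point you are essentially reconstructing the paper's proof.
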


\noindent An important step in the proof is the following special case.  

\begin{proposition} \label{special_norm}
  Let $\Gamma$ be a lattice in a simple Lie group $G$, not isogenous to $\mathrm{PGL}_2(\RR)$. Assume that $b_1(\Gamma) \ge 2$ and, for all $n \ge 1$, let $c_n$ be the maximal number of pairwise non-isomorphic normal subgroups $\Gamma' \lhd \Gamma$ with $\Gamma / \Gamma' \cong \ZZ / n$. Then
  \[
  \liminf_{n \to +\infty} \frac{c_n} n > 0. 
  \]
\end{proposition}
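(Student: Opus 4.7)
The plan is to exploit the hypothesis $b_1(\Gamma) \ge 2$ to build at least $n$ distinct normal subgroups of $\Gamma$ with quotient $\ZZ/n$, and then invoke Mostow--Prasad rigidity to argue that these subgroups fall into at least $cn$ isomorphism classes for some constant $c > 0$.

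For the construction, I would pick two linearly independent primitive classes $\phi_1, \phi_2 \in H^1(\Gamma, \ZZ)$; together they define a homomorphism $\Phi = (\phi_1, \phi_2) \colon \Gamma \to \ZZ^2$ with image of finite index. After replacing $\Gamma$ by a finite-index subgroup (an operation that alters $c_n$ only by a bounded multiplicative factor and so does not affect the asymptotic we seek), we may take $\Phi$ to be surjective. For each primitive vector $(a,b) \in (\ZZ/n)^2$ (i.e.\ $\gcd(a,b,n) = 1$), the composition
\[
\Gamma \xrightarrow{\Phi} \ZZ^2 \xrightarrow{(x,y) \mapsto ax+by} \ZZ/n
\]
is a surjection whose kernel $\Gamma_{a,b}^n$ is normal of index $n$ with cyclic quotient. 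Two such subgroups coincide iff $(a,b)$ and $(a',b')$ generate the same cyclic subgroup of $(\ZZ/n)^2$, so a standard count produces $n\prod_{p\mid n}(1 + p^{-1}) \ge n$ distinct such kernels.

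For the lower bound on isomorphism classes, Mostow--Prasad rigidity (available since $G$ is not isogenous to $\mathrm{PGL}_2(\RR)$) says that an abstract isomorphism $\Gamma_{a_1,b_1}^n \cong \Gamma_{a_2,b_2}^n$ must be induced by conjugation by some $g \in \mathrm{Isom}(X)$, and since both subgroups have finite index in $\Gamma$ we conclude $g \in \mathrm{Comm}_{\mathrm{Isom}(X)}(\Gamma)$. The number of isomorphism classes is therefore at least the number of orbits of $\mathrm{Comm}(\Gamma)$ acting on the set $\{\Gamma_{a,b}^n\}$. To bound the orbit size, pass to a characteristic finite-index subgroup $\Gamma_0 \lhd \Gamma$ on which $\mathrm{Comm}(\Gamma)$ acts (virtually) by outer automorphisms: in the non-arithmetic case $\Gamma_0$ may be taken to be the normal core of $\Gamma$ in $\mathrm{Comm}(\Gamma)$, which has finite index by Margulis's commensurator theorem; in the arithmetic case a sufficiently deep principal congruence subgroup serves. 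The induced action on $H^1(\Gamma_0, \ZZ)$ preserves the integral lattice and factors through $\mathrm{Out}(\Gamma_0)$, which is finite by Mostow rigidity (every automorphism of $\Gamma_0$ is realized by an isometry of the finite-volume orbifold $\Gamma_0 \bs X$, whose isometry group is finite). This bounds the $\mathrm{Comm}(\Gamma)$-orbit of each $\Gamma_{a,b}^n$ by some constant $C = C(\Gamma)$ independent of $n$, yielding $c_n \ge n/C$ and hence $\liminf_{n\to\infty} c_n/n \ge 1/C > 0$.

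The key technical obstacle lies in the commensurator analysis in the arithmetic case: here $\mathrm{Comm}(\Gamma)$ is dense in $\mathrm{Isom}(X)$ and does not literally act on $\Gamma$, so one must verify carefully that the virtual outer action on a deep enough characteristic subgroup descends to a well-defined finite-image action on the relevant cohomology, an argument that ultimately rests on the finiteness of $\mathrm{Out}$ for any finite-volume symmetric-space orbifold.
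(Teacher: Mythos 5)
Your construction of the $\ge n$ normal subgroups with quotient $\ZZ/n$ and your use of Mostow--Prasad rigidity plus the commensurator follow the same skeleton as the paper, and your non-arithmetic case is essentially the paper's (Margulis's theorem makes $\mathrm{Comm}(\Gamma)$ a lattice containing $\Gamma$ with finite index, and since each kernel is normal in $\Gamma$, orbit--stabilizer bounds its conjugacy orbit by $[\mathrm{Comm}(\Gamma):\Gamma]$ --- you do not even need the detour through $\mathrm{Out}(\Gamma_0)$ and $H^1(\Gamma_0;\ZZ)$ there). Two smaller points first: the initial replacement of $\Gamma$ by a finite-index subgroup to make $\Phi$ surjective is both unnecessary (since $b_1(\Gamma)\ge 2$, the abelianization itself surjects onto $\ZZ^2$) and unjustified as stated, because the proposition counts normal subgroups of $\Gamma$ itself and normal subgroups of a finite-index subgroup do not convert back into such at a ``bounded multiplicative cost.'' Also, even where the $\mathrm{Out}(\Gamma_0)$ route is used, bounding the orbit of $\Gamma^n_{a,b}$ by the orbit of its image in $H^1(\Gamma_0;\ZZ/n)$ requires knowing that $A\mapsto A\cap\Gamma_0$ is boundedly finite-to-one on your family, which you do not address.

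The genuine gap is the arithmetic case. There is no finite-index subgroup of $\Gamma$ normalized by $\mathrm{Comm}(\Gamma)$: in the arithmetic case the commensurator is dense in $G$, while the normalizer of any lattice (in particular of any finite-index subgroup of $\Gamma$) is discrete. In particular a ``sufficiently deep principal congruence subgroup'' is \emph{not} preserved by conjugation by general elements of $\mathbf{G}(\QQ)$ --- it is only commensurated --- so the claimed well-defined finite-image action on $H^1(\Gamma_0,\ZZ)$ does not exist, and your own closing paragraph concedes that this step is unverified. The paper closes exactly this hole with a different idea: it introduces the \emph{congruence closure} and proves (via Nori--Weisfeiler strong approximation applied to the derived subgroup) that normal subgroups with abelian quotient have only finitely many possible congruence closures $\Gamma_1,\dots,\Gamma_m$; a conjugating element in $\mathbf{G}(\QQ)$ must carry congruence closures to congruence closures, hence after pigeonholing the kernels by their closure it must lie in the normalizer of some fixed $\Gamma_i$, which is again a lattice, and orbit--stabilizer finishes the count. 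Without this (or some substitute), your argument does not establish the bounded-orbit claim for arithmetic $\Gamma$.
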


We note that the hypothesis on \( \Gamma \) implies that \( G \) is of real rank 1, and in fact isogenous to one of \( \mathrm{SO}(n, 1) \) or \( \mathrm{SU}(n, 1) \), as lattices in higher-rank simple Lie groups have property (T) and hence finite abelianisation, as do those in \( \mathrm{Sp}(n, 1) \) and the exceptional rank 1 group \( F_4^{-20} \). 


\subsubsection{Remarks}

\begin{itemize}
\item Proposition \ref{special_norm} shows that when $b_1(\Gamma) \ge 2$, for any large enough $n$ there exists a pair of non-isomorphic normal subgroups of index $n$ within $\Gamma$. The conclusion of Proposition~\ref{main_normal} is much weaker, and we do not know whether in general there are non-homeomorphic normal covers for every degree in a subset of \( \NN \) of natural density one. Note that in general this cannot be true of every degree---for example if \( M \) is a homology sphere then it cannot have regular covers of any prime degree.

\item We still have some control over the density of the sequence $d_n$ in Proposition \ref{main_normal}: it follows from the proof that we have $d_n \ll n^M$ where $M = r!$, with $r$ the smallest index of a normal subgroup with $b_1 \ge 2$. 

\item Moreover, the proof of Proposition \ref{main_normal} shows that we can take $c_n \gg d_n^{1/e - \eps}$ for all $\eps > 0$. 
  
\item The only ingredient specific to dimension 3 in the proof of Proposition \ref{main_normal} is property~(\ref{infinite_virtbetti}). We note that this property holds for many lattices in higher dimensions as well (in particular all known lattices in $\mathrm{SO}(n,1)$ in even dimensions), and for some complex hyperbolic lattices (see for example \cite{Marshall_unitary}). 

\item In \cite{Zimmermann}, Zimmermann produces a similar set of subgroups in $\Gamma$. In that construction, the quotients are isomorphic to $\ZZ/p^{n-i}\ZZ \oplus \ZZ/p^i\ZZ$ where $p$ is some large prime. Moreover, the number of subgroups in Zimmermann's construction is sublinear as a function of their index.

\end{itemize}


\subsubsection{Proof of Proposition \ref{special_norm}}

Let $\varphi_2(n)$ be the number of surjective homomorphisms from $(\ZZ/n)^2$ to $\ZZ/n$ and let $\varphi(n)$ be Euler's totient. Observe that $(1,0)\mapsto a, (0,1)\mapsto *$ defines $\varphi(n)\times n$ surjective homomorphisms from $(\ZZ/n)^2$ to $\ZZ/n$, where $(a,n)=1$ and $*$ is any arbitrary element of $\Z/n$. Similarly, there are $n\times \varphi(n)$ surjective homomorphisms defined by $(1,0)\mapsto *, (0,1)\mapsto b$, where $(b,n)=1$ and $*$ is an arbitrary element of $\Z/n$. Counted without repetition, we have produced 
\[
\varphi(n)\times n + n \times \varphi(n) - \varphi(n) \times\varphi(n) = (2n - \varphi(n))\varphi(n)
\] 
surjective homomorphisms from $(\ZZ/n)^2$ to $\ZZ/n$. Thus, $\varphi_2(n) \geq (2n - \varphi(n))\varphi(n)$.


Let $h_n$ be the number of surjective morphisms from $\Gamma$ to $\ZZ/n$. Since $\Gamma$ surjects onto $(\ZZ/n)^2$ we have that $h_n \ge \varphi_2(n)$. Since two surjective morphisms $\pi_1, \pi_2 : \Gamma \to Q$ have the same kernel if and only if there exists an automorphism $\psi$ of $Q$ such that $\pi_2 = \psi \circ \pi_1$, and the number of automorphisms of $\ZZ/n$ equals $\varphi(n)$ and hence
\[
\frac{h_n} {\card{\mathrm{Aut}(\ZZ/n)}} \ge \frac{\varphi_2(n)}{\varphi(n)} \ge 2n - \varphi(n) \ge n
\]
we get that there are pairwise distinct normal subgroups $A_1, \ldots, A_n \le \Gamma$ such that $\Gamma / A_j \cong \ZZ / n$ for $1 \le j \le n$. 

By Mostow rigidity, we have that $c_n$ is at least the maximal number of $A_j$ which are pairwise non-conjugate in $G$. We want to prove that there exists $b> 0$ depending only on $\Gamma$ such that for every $n$ at most $b$ among the $A_j$s can be conjugated to each other, which implies that $c_n \ge n/b$ and finally the conclusion of Proposition~\ref{special_norm}. For this we use a refinement of the arguments of \cite{BGLM} and \cite{BGLS} that we mentioned in the previous subsection.

\medskip

First we deal with the case where $\Gamma$ is non-arithmetic: then an immediate and well-known consequence of Margulis' commensurator criterion for arithmeticity\footnote{The criterion \cite[Theorem (B) in Chapter IX]{Margulis_book} states that $\Gamma$ has finite index in its commensurator $\Omega$; since any lattice commensurable to $\Gamma$ commensurates $\Gamma$, it has to be contained in $\Omega$ and the claim follows.} is that there is a unique maximal lattice $\Omega \subset G$ in the commensurability class of $\Gamma$, which is equal to the commensurator of $\Gamma$. Thus any $g \in G$ which conjugates two $A_j$s must belong to $\Omega$, and since the $A_j$s are normal in $\Gamma$ each has at most $b = |\Omega / \Gamma|$ conjugates among them. 

\medskip

Now assume $\Gamma$ is arithmetic. By definition of arithmeticity there exists a semisimple algebraic group $\mathbf G$ defined over $\ZZ$ such that $\Gamma \subset \mathbf G(\ZZ)$ with finite index. For $p$ a rational prime let $\ZZ_p$ denote the $p$-adic integers. Then a {\em congruence subgroup} of $\Gamma$ is a subgroup of the form $\Gamma \cap U$ where $U$ is a finite index (equivalently, open) subgroup in $\prod_p \mathbf G(\ZZ_p)$. If $\Lambda$ is a finite index subgroup in $\Gamma$ we denote by $\ccl{\Lambda}$ the {\em congruence closure} of a subgroup $\Lambda \subset \Gamma$: this is the smallest congruence subgroup of $\mathbf G(\QQ)$ containing $\Lambda$; explicitely the congruence closure of $\Lambda$ is equal to $\Gamma \cap V$ where $V$ is the closure in $\prod_p \mathbf G(\ZZ_p)$ of $\Lambda$. 

\begin{lemma} \label{abelian_cong}
  Let $\Gamma$ be an arithmetic group. There exists finitely many congruence subgroups $\Gamma_1, \ldots, \Gamma_m$ with the following property: if $\Lambda$ is a finite index normal subgroup in $\Gamma$ such that $\Gamma / \Lambda$ is abelian then $\ccl{\Lambda}$ is equal to one of the $\Gamma_i$. 
\end{lemma}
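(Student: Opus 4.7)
The plan is to express $\ccl{\Lambda}$ as the preimage in $\Gamma$ of a subgroup of a certain fixed finite abelian group, so that finiteness of the set of possible congruence closures becomes immediate.

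By definition, $\ccl{\Lambda} = \Gamma \cap \overline{\Lambda}$, where $\overline{\Lambda}$ denotes the closure of $\Lambda$ in $K = \prod_p \mathbf{G}(\ZZ_p)$, and let $\overline{\Gamma}$ be the closure of $\Gamma$ in $K$. The first observation is that since $\Gamma/\Lambda$ is abelian, $\Lambda$ contains the commutator subgroup $[\Gamma, \Gamma]$, and hence $\overline{\Lambda} \supset \overline{[\Gamma, \Gamma]}$. Therefore $\overline{\Lambda} \cap \overline{\Gamma}$ is the preimage in $\overline{\Gamma}$ of a closed subgroup of the profinite quotient
\[
A := \overline{\Gamma} \big/ \overline{[\Gamma, \Gamma]}.
\]
Note that $A$ is abelian (the images of all commutators from $\Gamma$ vanish, and these are dense in $\overline{\Gamma}$).

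The key step, and the main obstacle, is to prove that $A$ is a \emph{finite} group. The plan is to exploit the semisimplicity of $\mathbf{G}$. Passing to the simply connected cover $\widetilde{\mathbf{G}}$, standard structural results on $p$-adic semisimple groups (going back to Matsumoto and Prasad--Raghunathan) show that $\widetilde{\mathbf{G}}(\ZZ_p)$ is perfect for all primes $p$ of good reduction, and has finite abelianization at the remaining finitely many primes. Combined with the Nori--Weisfeiler strong approximation theorem for simply connected forms (already invoked earlier in the paper), this forces the abelianization $\widetilde{K}^{\mathrm{ab}}$ of $\widetilde{K} = \prod_p \widetilde{\mathbf{G}}(\ZZ_p)$ to be finite; transferring from $\widetilde{\mathbf{G}}$ to $\mathbf{G}$ through the finite central isogeny and accounting for the finite-index discrepancy between $\overline{\Gamma}$ and (the image of) $\widetilde{K}$ then yields the finiteness of $A$.

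Once the finiteness of $A$ is established, the conclusion is immediate: the finitely many subgroups of $A$ yield finitely many possibilities for $\overline{\Lambda} \cap \overline{\Gamma}$, and hence finitely many possible congruence closures $\ccl{\Lambda}$. Declaring these finitely many congruence subgroups to be the desired list $\Gamma_1, \ldots, \Gamma_m$ completes the proof.
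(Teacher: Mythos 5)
Your first reduction coincides with the paper's: since $\Lambda\supseteq[\Gamma,\Gamma]$, the closure $\overline{\Lambda}$ is a closed subgroup of $\overline{\Gamma}$ containing $\overline{[\Gamma,\Gamma]}$, so everything comes down to showing that $\overline{[\Gamma,\Gamma]}$ has finite index in $\overline{\Gamma}$ (equivalently, that your $A$ is finite). Where you diverge is in how you prove this finiteness, and there your sketch has a real gap. The paper's argument is a one-liner that you miss: because $\mathbf G$ is semisimple it has no nontrivial abelian quotients, so the derived subgroup $[\Gamma,\Gamma]$ is itself \emph{Zariski-dense} in $\mathbf G$, and Nori--Weisfeiler strong approximation applied directly to $[\Gamma,\Gamma]$ shows that its closure has finite index in that of $\Gamma$. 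The Zariski-density of the derived subgroup is the crux, and it never appears in your proposal.

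Your substitute route via $\widetilde K^{\mathrm{ab}}$ does not close as written. First, the finiteness of $\widetilde K^{\mathrm{ab}}$ follows from the local perfectness statements alone; strong approximation plays no role there, so you have invoked the key theorem in a place where it is not needed and omitted it where it is. Second, finiteness of $\widetilde K^{\mathrm{ab}}$ says nothing about $A=\overline{\Gamma}/\overline{[\Gamma,\Gamma]}$ unless you can compare $\overline{\Gamma}$ and $\overline{[\Gamma,\Gamma]}$ with open subgroups of the image of $\widetilde K$. The ``finite-index discrepancy between $\overline{\Gamma}$ and the image of $\widetilde K$'' is false as stated: the image of $\widetilde K$ in $K=\prod_p\mathbf G(\ZZ_p)$ has in general \emph{infinite} index when the isogeny is nontrivial (already for $\mathrm{SL}_2\to\mathrm{PGL}_2$ the cokernel at each odd prime $p$ is $\ZZ_p^\times/(\ZZ_p^\times)^2\neq 1$). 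What is true, and what you would have to prove, is (i) that $\overline{\Gamma}\cap\pi(\widetilde K)$ has finite index in $\overline{\Gamma}$ --- this uses topological finite generation of $\overline{\Gamma}$ together with the fact that the cokernels $\mathbf G(\ZZ_p)/\pi(\widetilde{\mathbf G}(\ZZ_p))$ are abelian of bounded exponent --- and, decisively, (ii) that $\overline{[\Gamma,\Gamma]}$ contains the image of an \emph{open} subgroup of $\widetilde K$. Point (ii) is precisely a strong-approximation statement for a Zariski-dense subgroup (either for $[\Gamma,\Gamma]$ itself, as in the paper, or for a lift of $\Gamma$ to $\widetilde{\mathbf G}$ combined with perfectness of open subgroups of $\widetilde K$). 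Your ingredients can be assembled into a correct, though longer, proof, but as written the openness of $\overline{[\Gamma,\Gamma]}$ in $\overline{\Gamma}$ --- the entire content of the lemma --- is never established.
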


\begin{proof}
  If $\Gamma / \Lambda$ is abelian then so is $\Gamma / \ccl{\Lambda}$ so that it suffices to show that there are only finitely many congruence subgroups $\Delta \le \Gamma$ such that $\Gamma / \Delta$ is abelian. Let $\Gamma'$ be the derived subgroup of $\Gamma$, which is a Zariski-dense in $\mathbf G$ (since $\mathbf G$ does not have abelian quotients). By Nori-Weisfeiler strong approximation \cite{Nori,Weisfeiler} it follows that the closure of $\Gamma'$ in $\prod_p \mathbf G(\ZZ_p)$ has finite index in that of $\Gamma$. This means that at most finitely many congruence subgroups of $\Gamma$ contain $\Gamma'$, which is the statement we wanted to prove. 
\end{proof}

The commensurator of $\Gamma$ is equal to (the image in $G$ of) $\mathbf G(\QQ)$ and we have that $\ccl{g\Lambda g^{-1}} = g\ccl{\Lambda} g^{-1}$ for all $g \in \mathbf G(\QQ)$. It follows that if two subgroups of $\Gamma$ are conjugate to each other an element conjugating them must belong to $\mathbf G(\QQ)$ and conjugate their congruence closures to each other as well: in particular, if the latter are equal then the element must belong to its normaliser. Let \( \Gamma_1, \ldots, \Gamma_m \) be given by the lemma and let \( i \) be an index such that \( \Gamma_i \) contains \( n' \ge n/m \) of the \( A_j \)s, and assume for notational ease that those are \( A_1, \ldots, A_k \). It follows from the above that for any $n$, any element conjugating two of the $A_1, \ldots, A_k$ must belong to the normaliser \( \Omega_i \) of $\Gamma_i$. Thus, as the \( A_j \) are normal in \( \Gamma \), the maximal number of conjugates among \( A_1, \ldots, A_k \) is $c = |\Omega_i/(\Gamma_i \cap \Gamma)|$. In conclusion, we have shown that we can find at least \( k/C = n/(cm) \) among the \( A_j \) that are pairwise not conjugate in \( \mathrm{PGL}_2(\CC) \), hence our claim follows (with \( b = cm \)). 
  
\begin{proof}[Proof of Proposition \ref{main_normal}]
  
Let $\Delta$ be a lattice in $\mathrm{PGL}_2(\CC)$. By Theorem~\ref{theorem_virtbetti}(\ref{infinite_virtbetti}) there exists a finite index normal subgroup $\Gamma \triangleleft \Delta$ with $b_1(\Gamma) \ge 2$, so that we may apply Proposition \ref{special_norm} to $\Gamma$.
Let $a_1, \ldots, a_r$ be representatives for the left cosets of $\Gamma$ in $\Delta$. Let $n \ge 1$ and $B_1, \ldots, B_{c_n}$ the subgroups obtained in Proposition \ref{special_norm}. Then since $B_j \triangleleft \Gamma$ we get, for all $1 \le j \le c_n$, that: 
\[
C_j = \bigcap_{i=1}^r a_i B_j a_i^{-1}
\]
is normal in $\Delta$.

We recall that if $A$ is a permutation group of degree $r$ (that is, a subgroup of the symmetric group $\sym_r$) and $B$ any group, the \emph{wreath product} $A \wr B$ is the semidirect product $A \rtimes B^r$ where $A$ acts on $B^r$ by permuting indices. 

\begin{lemma} \label{bigger_quotient}
There exist $r,l\in\NN$, depending only on $\Gamma$ and $\Delta$, such that for each $1\le j\le c_n$, there exists a finite abelian group $Q_j$ so that
\[(\ZZ/n)^l \twoheadrightarrow Q_j \]
and
\[\Delta/C_j \hookrightarrow \sym_r \wr  Q_j.\]
\end{lemma}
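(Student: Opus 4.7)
Set $r := [\Delta:\Gamma]$ and $l := r$; both depend only on $\Gamma$ and $\Delta$. The plan is to exhibit $Q_j := \Gamma/C_j$ as a subgroup (hence, by the structure theorem for finite abelian groups, a quotient) of $(\ZZ/n)^r$, and then apply a Kaloujnine--Krasner style embedding to the extension $1\to Q_j\to\Delta/C_j\to\Delta/\Gamma\to 1$.

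First I would observe that for each $i$, conjugation by $a_i$ is an automorphism of $\Gamma$ (since $\Gamma\triangleleft\Delta$), so $a_i B_j a_i^{-1}$ is a normal subgroup of $\Gamma$ with $\Gamma/(a_i B_j a_i^{-1})\cong\Gamma/B_j\cong\ZZ/n$. Let $\pi\colon\Gamma\to\ZZ/n$ be the quotient map with kernel $B_j$ and, for $1\le i\le r$, let $\pi_i(g):=\pi(a_i^{-1}g a_i)$; then $\ker\pi_i=a_iB_ja_i^{-1}$. Bundling these together yields a homomorphism
\[
(\pi_1,\ldots,\pi_r)\colon \Gamma\longrightarrow (\ZZ/n)^r
\]
whose kernel is precisely $\bigcap_i a_iB_ja_i^{-1}=C_j$. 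Hence $Q_j=\Gamma/C_j$ injects into $(\ZZ/n)^r$. By the structure theorem, any such subgroup is isomorphic to $\bigoplus_{s=1}^k\ZZ/d_s$ with each $d_s\mid n$ and $k\le r$, so it is a quotient of $(\ZZ/n)^l$ (with $l=r$), as required.

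Next I would produce the embedding into the wreath product. Since $C_j\triangleleft\Delta$, we have a short exact sequence
\[
1\longrightarrow Q_j\longrightarrow \Delta/C_j\longrightarrow \Delta/\Gamma\longrightarrow 1,
\]
with $|\Delta/\Gamma|=r$. Picking a set-theoretic section $s\colon\Delta/\Gamma\to\Delta/C_j$ and applying the Kaloujnine--Krasner construction gives an embedding
\[
\Delta/C_j\;\hookrightarrow\;(\Delta/\Gamma)\ltimes Q_j^{\Delta/\Gamma}\;=\;(\Delta/\Gamma)\wr Q_j,
\]
where $\Delta/\Gamma$ acts on $Q_j^r$ by permuting the $r$ coordinates (indexed by itself). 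Composing with the left regular representation $\Delta/\Gamma\hookrightarrow\sym_r$ (which is compatible with the permutation action on the coordinates of $Q_j^r$) yields
\[
\Delta/C_j\;\hookrightarrow\;\sym_r\wr Q_j,
\]
completing the proof.

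The only substantive point is making sure the wreath-product conventions line up: one must verify that the Kaloujnine--Krasner embedding is compatible with enlarging the permuting group from $\Delta/\Gamma$ to $\sym_r$ via the regular representation. This is a direct check from the definitions. All other steps (normality of conjugates of $B_j$, identification of $\ker(\pi_1,\ldots,\pi_r)$ with $C_j$, and the fact that a subgroup of $(\ZZ/n)^r$ is a quotient of $(\ZZ/n)^r$) are routine.
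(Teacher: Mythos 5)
Your proof is correct, and its skeleton is the same as the paper's: embed $\Delta/C_j$ into a wreath product over the block system of $\Gamma$-cosets, then show $Q_j=\Gamma/C_j$ is a quotient of $(\ZZ/n)^l$. The wreath-product step is identical in substance — the paper works with the left-translation action of $\Delta$ on $\Delta/C_j$, which preserves the partition into $\Gamma$-cosets and whose block stabilisers act through $\Gamma/C_j$; that is exactly the proof of the Kaloujnine--Krasner theorem you cite, and the compatibility with enlarging $\Delta/\Gamma$ to $\sym_r$ is indeed immediate with the paper's convention $A\wr B = A\ltimes B^r$. Where you genuinely diverge is the abelian-quotient step: the paper observes that $[\Gamma,\Gamma]\cdot\Gamma^n$ is characteristic in $\Gamma$ and contained in $B_j$, hence in every conjugate $a_iB_ja_i^{-1}$ and so in $C_j$, which exhibits $Q_j$ directly as a quotient of $H_1(\Gamma)\otimes\ZZ/n$ and lets one take $l$ to be the rank of that group; you instead inject $Q_j$ into $(\ZZ/n)^r$ via the tuple $(\pi_1,\dots,\pi_r)$ of conjugated quotient maps and then use the structure theorem to convert the injection into a surjection from $(\ZZ/n)^r$, taking $l=r$. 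Both arguments are valid and both yield an $l$ independent of $n$, which is all that the application in Proposition~\ref{main_normal} requires; the paper's choice of $l$ is a bit smaller, while yours avoids the characteristic-subgroup observation at the cost of an extra (routine) appeal to the classification of finite abelian groups.
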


\begin{proof}
  Let $\rho$ be the morphism $\Delta \to \sym(\Delta /C_j)$ associated to the left-translation action. It respects the decomposition into left $\Gamma$-cosets and hence it has image inside $\sym(\Delta/\Gamma) \wr \sym(\Gamma/C_j)$. Moreover the stabiliser of a block has its image in a conjugate of the image of the action of $\Gamma$ on $\Gamma/C_j$.

  It remains to see that \( \Gamma/C_j \) is abelian of exponent \( n \) (we can then take \( l \) to be the rank of \( H_1(\Gamma) \otimes \ZZ/n \)). To do so we need only remark that the subgroup  \([\Gamma,\Gamma]\cdot\Gamma^n < \Gamma \) generated by commutators and \( n \)th powers 
is characteristic in \( \Gamma \) and contained in \( B_j \), hence it is also contained in \( C_j \). Since
\[[\Gamma,\Gamma]\cdot\Gamma^n = \ker\left( \Gamma \to H_1(\Gamma) \otimes \ZZ/n \right)\]
this implies that
\[H_1(\Gamma)\otimes \ZZ/n \twoheadrightarrow \Gamma/C_j\] 
and hence that the image of $\Delta/C_j$ in the second factor in the wreath product is a quotient of $H_1(\Gamma)\otimes \ZZ/n$.
\end{proof}

By the same argument as in the proof of the previous proposition we can eliminate some of the $C_j$s so that at least $b_n = c_n/a$ (where $a$ depends only on $\Gamma$) are pairwise non-isomorphic. Indeed, if $\Delta$ is non-arithmetic then the same argument applies verbatim, while if $\Delta$ is arithmetic we have to show that for $C_j \triangleleft \Delta$ with $\Delta/C_j \hookrightarrow \sym_r \wr Q_j$ there are only finitely many possibilities for the congruence closures of $C_j$ in $\Delta$. To do this, we only need to note that this is true of the congruence closures of the $C_j$ in 
\[\Delta_1:= \ker\left(\Gamma \to \sym_r\right).\]
Indeed, since all the $\Delta_1/C_j$ are abelian, this follows from Lemma \ref{abelian_cong}. Moreover, since these closures contain those in $\Delta$ it is also true of the latter. 

So we may assume that $C_1, \ldots, C_{b_n}$ are pairwise non-conjugate, with $b_n \ge c_n/a \ge n/a'$ for some $a' \ge 1$ independent of $n$. A priori the $C_j$ have different indices in $\Delta$. But by Lemma \ref{bigger_quotient} the orders $|\Delta/G_j|$ all divide $r! \cdot n^{lr}$. Let $\delta(N)$ denote the number of divisors of a positive integer $N$, then using the classical estimate that $\forall \eps > 0$ there exists a constant $K_\varepsilon$ so that
\[
\delta(N) \leq K_\varepsilon\cdot N^\eps
\]
for all $N\in\NN$ (see e.g. \cite[Proposition 7.12]{dKL}) and the fact that $b_n \gg (r! \cdot n^{rl})^{1/(rl)}$ we see that
\[
b_n / \delta(r! \cdot d_n^r) \xrightarrow[n \to+\infty]{} +\infty.
\]
Hence by the pigeonhole principle we see that as $n \to +\infty$ an unbounded number of the $C_j$ have the same index in $\Delta$. This finishes the proof of Proposition~ \ref{main_normal}. 
\end{proof}

\section{Euclidean manifolds}

The general argument we will present for Seifert fibered manifolds (Proposition \ref{prp_seif})  will not hold for manifolds that are finitely covered by either the $3$-torus or the $3$-sphere. Seifert fibered manifolds that are finitely covered by $T^3$ are exactly the closed \emph{Euclidean manifolds} (see eg. \cite{Sco}). In this section, we prove the following proposition.

\begin{proposition}~\label{prop:euclidean}
The only closed orientable exceptional Euclidean $3$-manifold is the $3$-torus~$T^3$. 
\end{proposition}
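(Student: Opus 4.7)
Since $T^3$ is exceptional by Proposition~\ref{prop:specialones}, it remains to show that every other closed orientable Euclidean $3$-manifold $M$ fails to be exceptional. By Bieberbach's theorem, $\Gamma := \pi_1(M)$ fits in a short exact sequence
\[
1 \to \Lambda \to \Gamma \to F \to 1
\]
with $\Lambda \cong \ZZ^3$ the (maximal) translation lattice and $F$ a non-trivial finite subgroup of $\mathrm{SO}(3)$, the holonomy. Up to isomorphism $F$ is one of $\ZZ/2$, $\ZZ/3$, $\ZZ/4$, $\ZZ/6$, or $\ZZ/2 \oplus \ZZ/2$.

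The key observation is that a finite-index subgroup $\Gamma' \leq \Gamma$ is abelian if and only if $\Gamma' \subseteq \Lambda$: any $g \in \Gamma' \setminus \Lambda$ maps to a non-trivial element of $F$ and therefore acts non-trivially by conjugation on any finite-index sublattice of $\Lambda$, precluding abelianness. When $\Gamma' \subseteq \Lambda$ the corresponding cover is homeomorphic to $T^3$; otherwise $\Gamma'$ is non-abelian, so the corresponding cover is not homeomorphic to $T^3$. It therefore suffices to find, for some $n$, both an abelian subgroup $\Gamma_1 \leq \Gamma$ and a non-abelian subgroup $\Gamma_2 \leq \Gamma$ of index $n$.

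For the abelian subgroup we can take any sublattice of $\Lambda$: such a sublattice of index $m$ in $\Lambda$ has index $m|F|$ in $\Gamma$. For the non-abelian subgroup we choose a non-trivial subgroup $F' \leq F$, an $F'$-invariant sublattice $L' \leq \Lambda$, and lifts $s_1, \ldots, s_k \in \Gamma$ of generators of $F'$ such that $s_i^2 \in L'$ (and the relevant commutators lie in $L'$, if $F'$ is non-cyclic); then $\Gamma_2 := \langle L', s_1, \ldots, s_k \rangle$ is a non-abelian subgroup of index $[F:F'] \cdot [\Lambda : L']$. The task thus reduces to choosing $F'$ and $L'$ so that this index matches one realized by an abelian subgroup, i.e.\ a multiple of $|F|$.

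For the four cyclic-holonomy manifolds one takes $F' = F$ together with an $F$-invariant sublattice $L'$ of index $|F|$ in $\Lambda$; in each case a short explicit computation with the Bieberbach presentation (using that $\sum_{k=0}^{|F|-1} A^k$ annihilates the rotational plane of $A$) produces such an $L'$ along with the required lift of a generator of $F$, yielding a non-abelian $\Gamma_2$ of index $|F|$ matching $\Gamma_1 = \Lambda$. The main obstacle is the Hantzsche--Wendt manifold, for which a direct calculation shows that no $F$-invariant sublattice of index $4$ in $\Lambda$ makes the full extension $\Lambda/L' \to \Gamma/L' \to F$ split; we resolve this by taking $F' \cong \ZZ/2$ a proper subgroup of $F$ together with an $F'$-invariant sublattice $L' \leq \Lambda$ of index $4$ (e.g.\ $L' = \ZZ e_1 \oplus 2\ZZ e_2 \oplus 2\ZZ e_3$ in an appropriate basis), producing a non-abelian $\Gamma_2$ of index $8$, which we match with any sublattice of $\Lambda$ of index $2$ in $\Lambda$, hence of index $8$ in $\Gamma$.
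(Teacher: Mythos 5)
Your overall strategy---reduce exceptionality to finding an abelian and a non-abelian finite-index subgroup of the same index, using that the abelian ones are exactly the sublattices of the translation lattice $\Lambda$ and give torus covers---is precisely the idea behind the paper's Proposition~\ref{proposition_euccase}, which deduces Proposition~\ref{prop:euclidean} as a corollary. The difference is that the paper runs the argument uniformly for an arbitrary crystallographic group (any dimension, orientable or not, torsion allowed) by picking a holonomy element $\sigma$ of \emph{prime} order $p$ and shrinking the lattice only in the directions where $\sigma$ has no fixed vectors, namely $T'' = (T\cap E_f)\oplus p(T\cap E_f^\perp)$; since a lift $\gamma$ of $\sigma$ has $\gamma^p$ a $\sigma$-fixed translation, $\gamma^p$ automatically lies in $T''$ and no case analysis or classification of holonomy groups is needed. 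Your ``key observation'' and your treatment of the Hantzsche--Wendt manifold are correct (modulo the slip that the condition on the lifts should be $s_i^{n_i}\in L'$ with $n_i$ the order of the image of $s_i$, not $s_i^2\in L'$).

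There is, however, a genuine gap in the cyclic cases as you state them: for the $\ZZ/6$-holonomy manifold there is \emph{no} $F$-invariant sublattice $L'$ of index $6$ in $\Lambda$ containing $s^6$ for any lift $s$ of a generator. Indeed, here $\Lambda = \ZZ e_1\oplus P$ with $P\cong\ZZ[\omega]$ the hexagonal lattice on which the holonomy generator $A$ acts as a primitive sixth root of unity, and $s^6\in \pm e_1 + 6\ZZ e_1$ for every such lift. An $A$-invariant subgroup $L'\leq\Lambda$ projects onto an $A$-invariant subgroup of $P$, i.e.\ an ideal of $\ZZ[\omega]$, whose index is a norm from $\ZZ[\omega]$ and hence never $2$ or $6$; so $[\Lambda:L']=6$ forces $L'\cap\ZZ e_1$ to equal $6\ZZ e_1$ or $2\ZZ e_1$, and an odd multiple of $e_1$ can lie in neither. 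The repair stays inside your own framework: take $F'=\ZZ/3\leq\ZZ/6$ with $L'=\ZZ e_1\oplus(1-\omega)P$ of index $3$ (then $s=\gamma^2$ satisfies $s^3=e_1\in L'$ and $\Gamma_2=\langle L',s\rangle$ is non-abelian of index $2\cdot 3=6$), or keep $F'=F$ but allow $[\Lambda:L']=12$ and match with an index-$2$ sublattice of $\Lambda$. With that case corrected the proof is complete.
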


In fact we will prove a result about a more general class of groups. A group \( \Gamma \) is called a {\em crystallographic group} if it acts properly discontinuously and cocompactly on a finite-dimensional vector space; we refer to \cite{Auslander_cristal} for an overview. It then has to preserve a Euclidean metric on this space, so another definition is to say that crystallographic groups are lattices in isometry groups of Euclidean spaces.

In particular torsion-free crystallographic groups are the fundamental groups of finite-volume flat Riemannian manifolds, also called Euclidean manifolds. These are classified up to dimension~4. In dimension 2 there are only the torus and Klein bottle. It has been known since the 1930s that there are only ten closed $3$-manifolds which are covered by the $3$-torus, among which four are non-orientable~\cite{Nowacki,HW}. 
The ten closed Euclidean $3$-manifolds can be explicitly constructed~\cite{platycosms}, and as such concrete geometric arguments can be used to show that none of these but the $3$-torus are exceptional. However it is perhaps simpler to use a more algebraic argument to prove the following more general result. 

\begin{proposition} \label{proposition_euccase}
  Let \( \Gamma \) be a crystallographic group. Then \( \Gamma \) is exceptional if and only if it is free abelian. 
\end{proposition}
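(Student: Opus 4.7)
The plan is to use the first Betti number as an invariant distinguishing non-isomorphic finite index subgroups. Recall that every crystallographic group fits into a short exact sequence
\[
1 \to \Lambda \to \Gamma \to F \to 1,
\]
where $\Lambda$, the translation lattice, is a free abelian group of rank $d := \dim E$ and also the unique maximal normal abelian subgroup of finite index in $\Gamma$, and $F$ is a finite group acting faithfully on $\Lambda$. When $F = 1$ we have $\Gamma \cong \ZZ^d$ and every finite index subgroup is again free abelian of rank $d$, so $\Gamma$ is exceptional; this is the easy direction.

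Assume now $F \neq 1$. For any finite index subgroup $\Gamma' \subset \Gamma$ set $\Lambda' := \Gamma' \cap \Lambda$ and $F' := \Gamma'/\Lambda' \hookrightarrow F$. The five-term exact sequence applied to $1 \to \Lambda' \to \Gamma' \to F' \to 1$, combined with the commensurability of $\Lambda'$ and $\Lambda$ and with the equality of dimensions of invariants and coinvariants of a finite group acting on a $\QQ$-vector space, gives
\[
b_1(\Gamma') \,=\, \dim_{\QQ} (\Lambda \otimes \QQ)^{F'}.
\]
Because $F$ acts faithfully on $\Lambda$, any non-trivial $F' \subset F$ contains an element whose fixed subspace in $\Lambda \otimes \QQ$ is a proper subspace, and hence $b_1(\Gamma') < d$ whenever $F' \neq 1$, while $b_1(\Gamma') = d$ exactly when $\Gamma' \subset \Lambda$, that is, when $\Gamma'$ is a sublattice.

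Next I would produce, for an infinite set of $n \in |F|\cdot\NN$, two subgroups of $\Gamma$ of index $n$: one a sublattice of $\Lambda$ (of index $n/|F|$) and one with non-trivial holonomy. For the latter, fix $f_0 \in F$ of prime order $p$, pick a lift $x_0 \in \Gamma$, and set $\tau_0 := x_0^p \in \Lambda$. The sublattice $p^k\Lambda$ is characteristic in $\Lambda$, hence normalised by $x_0$, so
\[
H_k \,:=\, \langle x_0, p^k \Lambda \rangle
\]
has holonomy $\langle f_0 \rangle$ and translation subgroup $\langle \tau_0 \rangle + p^k\Lambda$. A coset count in $\Lambda/p^k\Lambda \cong (\ZZ/p^k)^d$ gives
\[
[\Gamma : H_k] \,=\, \frac{|F|}{p} \cdot \frac{p^{kd}}{e_k},
\]
where $e_k$ is the order of the image of $\tau_0$ in $\Lambda/p^k\Lambda$ (equal to $p^{k - v_p(\tau_0)}$ when $\tau_0 \neq 0$ and $k$ is large, and to $1$ when $\tau_0 = 0$). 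In either case $e_k$ divides $p^k$, so $[\Gamma : H_k]$ is a positive multiple of $|F|$ for all sufficiently large $k$, and it tends to infinity with $k$ (the rank-one case $d = 1$, where $\Gamma$ must be the infinite dihedral group $D_\infty$ and $\tau_0 = 0$ automatically, is covered by the torsion alternative). For each such $k$, pairing $H_k$ with a sublattice of $\Lambda$ of the same index yields two index-$n$ subgroups of $\Gamma$ with different first Betti numbers, hence non-isomorphic.

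The main technical step is the index computation for $H_k$ and the verification that $[\Gamma : H_k]$ hits infinitely many multiples of $|F|$; once that is in place, the faithfulness of the holonomy representation and the Betti-number formula conclude the proof.
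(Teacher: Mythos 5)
Your argument is correct. At the top level it follows the same strategy as the paper's proof: exhibit, for a common index, one subgroup contained in the translation lattice \( \Lambda \) and one subgroup carrying nontrivial holonomy, and then distinguish them. But both halves are executed genuinely differently. For the distinguishing invariant, the paper arranges its holonomy subgroup \( B = \langle\gamma\rangle T'' \) to be visibly non-abelian and compares it with an abelian sublattice \( L \); you instead derive \( b_1(\Gamma') = \dim_{\QQ}(\Lambda\otimes\QQ)^{F'} \) from the five-term exact sequence together with faithfulness of the point group action, which yields the sharper and more intrinsic dichotomy \( b_1(\Gamma') = \dim E \) if and only if \( \Gamma' \subseteq \Lambda \). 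For the index matching, the paper splits \( T \) along the fixed space of an order-\( p \) element \( \sigma \), setting \( T'' = (T\cap E_f)\oplus p(T\cap E_f^{\bot}) \) precisely so that the index of \( B \) can be computed exactly and matched by a sublattice; you take the cruder congruence subgroup \( p^k\Lambda \) and only need to verify that \( [\Gamma:H_k] = \tfrac{|F|}{p}\cdot\tfrac{p^{kd}}{e_k} \) is an unbounded multiple of \( |F| \), which your valuation estimate \( e_k \mid p^k \) gives for \( d \ge 2 \), with the rank-one case disposed of by noting that there \( x_0^p = e \). Your route buys a cleaner invariant (no need to check non-commutativity inside \( B \)) at the cost of a divisibility check and the separate \( d=1 \) remark; the paper's buys an exact index computation at the cost of the eigenspace bookkeeping in its preliminary lemma.
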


\begin{proof}
  Let \( E \) be a Euclidean vector space such that \( \Gamma \) is a lattice in \( \isom(E) \cong \OG(E) \ltimes E \). Let \( \pi \) be the map from \( \Gamma \) to \( \OG(E) \). Then \( T = \ker(\pi) \) is a free abelian group of rank \( \dim(E) \) by Bieberbach's theorem. In the sequel we will identify it with a subgroup of \( E \). From now on we will assume that \( \Gamma \) is not free abelian, so \( \pi \) has nontrivial image. 

  Let \( \Pi = \pi(\Gamma) \), let \( p \) be a prime dividing the order \( |\Pi| \) and let \( m = |\Pi|/p \). Choose an element \( \sigma \in \Pi \) which has order \( p \).

  \begin{lemma}
    Let \( E_f = \ker(\sigma - \id ) \). Then
    \[
    T' = (T \cap E_f) \oplus (T \cap E_f^\bot)
    \]
    is a finite index subgroup in \( T \).
  \end{lemma}

  \begin{proof}
    It suffices to show that \( T' \) has the same rank as \( T \), in other words, that \( T \cap E_f \) and \( T \cap E_f^\bot \) are of respective rank \( \dim(E_f) \) and \( \dim(E_f^\bot) \).

    Let \( \omega = (\id + \sigma + \cdots + \sigma^{p-1})|_T \). We recall that \( \sigma T \subset T \): if \( \gamma \in \Gamma \) is any element projecting to \( \sigma \) and \( v \in T \), a quick computation shows that \( \gamma v\gamma^{-1} = \sigma v \) and hence \( \sigma v \in T \). On the other hand, \( \omega \) is equal to \( p \) times the orthogonal projection onto \( E_f \) and we have that \( \omega T \subset (T \cap E_f) \). So the image is discrete and has rank at most \( \dim(E_f) \), and the kernel has rank at most \( \dim(E_f^\bot) = \dim(E) - \dim(E_f) \). Thus, both inequalities must be equalities and this finishes the proof.
  \end{proof}

  Let 
  \[
  A = \pi^{-1}\langle\sigma\rangle.
  \]
  Since \( \sigma \) preserves \( E_f, E_f^\bot \), and \( T \) we have that \( \sigma T' = T' \) and hence, using the semi-direct product structure, that the subgroup \( T' \) is normal in \( A \). Likewise, we see that the subgroup
  \[
  T'' = (T \cap E_f) \oplus p(T \cap E_f^\bot)
  \]
  is also normal in \( A \) and as we have \( \sigma \not= \id \) we have \( \dim(E_f^\bot) = r > 0 \) and we get that
  \[
  |T/T''| = p^r |T/T'|.
  \]
  Now choose any subgroup \( L \) of index \( p^{r-1} |T/T'| \) in \( T \), \( \gamma \in \Gamma \) such that \( \pi(\gamma) = \sigma \) and let
  \[
  B = \langle\gamma\rangle T''
  \]
  so that \( B/T'' = \ZZ/p \) (because \( \gamma^p \in E_f \cap T \)). It follows that
  \begin{align*}
    |\Gamma/B| &= m/p \cdot |A/B| = m/p \cdot p^r|T/T'| \\
    &= mp^{r-1}|T/T'| = m|T/L| = |\Gamma/L|
  \end{align*}
  which finishes the proof: indeed, \( B \) is not abelian while \( L \) is and it follows that they cannot be isomorphic. 
\end{proof}

\begin{proof}[Proof of Proposition~\ref{prop:euclidean}]
Recall that $3$-manifolds with abelian fundamental group are well-understood; see e.g.~\cite[Table~2]{AFW}. Observe that if a closed Euclidean $3$-manifold $M$ has free abelian $\pi_1(M)$, then $\pi_1(M)\cong \mathbb{Z}^3$ and $M\cong T^3$, which we know is exceptional from Proposition~\ref{prop:specialones}.
\end{proof}

\section{Spherical manifolds}\label{sec_spherical}

Seifert fibered manifolds finitely covered by the $3$-sphere, namely the \emph{spherical manifolds}, also require a different proof than the general case. We will soon completely classify the exceptional spherical $3$-manifolds (Proposition~\ref{prp_spherical}). However, we will first need some notation. 

It is known that spherical $3$-manifolds are exactly the quotients of $\Sphere^3$ by finite groups that act by isometries~\cite{Sco}. These quotients of $\Sphere^3$ have been classified by Hopf \cite[Section 2]{Hopf} (see also \cite[p.\ 12]{AFW} and~\cite[Theorem~2]{Milnor:1957-1}) as follows (note that the group $Q_{4n}$ (in the notation of  \cite[p.\ 12]{AFW}) is isomorphic to $D_{2n}$ when $n$ is odd). 

\begin{theorem}\label{thm_sphericalclassification} The fundamental group of a spherical $3$-manifold is of exactly one of the following forms:
\begin{itemize}
\item The trivial group,
\item $Q_{8n} := \langle x,y|\; x^2=(xy)^2 = y^{2n} \rangle$, for $n\geq 1$,
\item the binary octahedral group: $P_{48} := \langle x,y|\; x^2 = (xy)^3 = y^4, \; x^4 =1 \rangle $,
\item the binary icosahedral group: $P_{120} := \langle x,y|\; x^2=(xy)^3=y^5,\;x^4=1\rangle $,
\item $D_{2^m(2n+1)} := \langle x,y |\; x^{2^m}=1,\;y^{2n+1} =1,\; xyx^{-1} = y^{-1} \rangle$, 
for $m\geq 2$ and $n\geq 1$,
\item $P_{8\cdot 3^m}' := \langle x,y,z|\;x^2=(xy)^2=y^2,\; zxz^{-1}=y,\;zyz^{-1} = xy,\; z^{3^m}=1 \rangle$, for $m\geq 1$,
\item the direct product of any of the above groups with a cyclic group of relatively prime order.
\end{itemize}
\end{theorem}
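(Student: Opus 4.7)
The plan is to proceed in three main steps: first, reduce the classification to finite subgroups of $\mathrm{SO}(4)$ acting freely on $S^3$; second, use the double cover $\mathrm{SU}(2) \times \mathrm{SU}(2) \to \mathrm{SO}(4)$ to parameterise such subgroups in terms of pairs of finite subgroups of $\mathrm{SU}(2)$; and third, identify which pairs give free actions and compute presentations.

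For the first step, by the elliptisation conjecture (established via Perelman's work on geometrisation), every closed spherical $3$-manifold $M$ is isometric to a quotient $S^3/\Gamma$ where $\Gamma < \mathrm{SO}(4)$ is finite and acts freely on the round sphere; since $S^3$ is simply connected, $\pi_1(M) \cong \Gamma$ as claimed in \cite{Sco}. Hence the problem becomes a purely group-theoretic one: classify finite subgroups of $\mathrm{SO}(4)$ that act freely on $S^3$, up to conjugacy.

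For the second step, identify $S^3$ with the group of unit quaternions so that $\mathrm{SU}(2) \cong S^3$, and note that the action $(a,b)\cdot x = axb^{-1}$ gives a surjection $\Phi : \mathrm{SU}(2) \times \mathrm{SU}(2) \twoheadrightarrow \mathrm{SO}(4)$ with kernel $\{\pm(1,1)\}$. The finite subgroups of $\mathrm{SU}(2)$ are classical (Klein): besides the cyclic groups, they are the binary dihedral groups $Q_{4n}$, the binary tetrahedral group $P_{24}$, the binary octahedral group $P_{48}$, and the binary icosahedral group $P_{120}$, with the standard presentations. Given $\Gamma < \mathrm{SO}(4)$ finite, the preimage $\widetilde\Gamma = \Phi^{-1}(\Gamma) \subset \mathrm{SU}(2)\times \mathrm{SU}(2)$ is a finite subgroup whose projections $L, R$ to the two factors are finite subgroups of $\mathrm{SU}(2)$, and Goursat's lemma parametrises $\widetilde\Gamma$ as a fibre product over a common finite quotient $L/L_0 \cong R/R_0$.

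For the third step, one translates freeness of the action on $S^3$ into an algebraic condition: $\Phi(a,b)$ fixes some unit quaternion if and only if $a$ and $b$ are conjugate in $\mathrm{SU}(2)$, so freeness amounts to the statement that no nonidentity element of $\widetilde\Gamma$ has projections $a,b$ which are $\mathrm{SU}(2)$-conjugate (equivalently, have equal trace). A case-by-case analysis along the list of Klein's subgroups, together with the Goursat datum, yields exactly the groups in the theorem: the "untwisted" direct products $G \times \mathbb{Z}/k$ appear when $L = L_0$ or $R = R_0$, with coprimality coming from the freeness condition, whereas the groups $D_{2^m(2n+1)}$ and $P'_{8 \cdot 3^m}$ arise from genuinely twisted fibre products where a cyclic quotient acts on a binary dihedral or binary tetrahedral factor. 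Presentations are then extracted directly from the generators and relations of the factors together with the twisting. The main obstacle will be the book-keeping in the Goursat case analysis and in certifying that different twisted amalgams yield non-isomorphic groups; this is where one has to be careful to match the classical list and to avoid overcounting.
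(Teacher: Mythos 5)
The paper does not prove Theorem~\ref{thm_sphericalclassification} at all: it is quoted as a classical result of Hopf \cite{Hopf} (see also \cite{Milnor:1957-1} and \cite[p.~12]{AFW}), with the prior identification of spherical manifolds with isometric quotients $S^3/\Gamma$ taken from \cite{Sco}. So there is no in-paper argument to compare yours against; what you have written is an outline of the standard proof of the cited classical theorem, and as an outline it is the right one: reduce to finite $\Gamma<\mathrm{SO}(4)$ acting freely (note that the paper, defining spherical manifolds as Seifert fibered manifolds covered by $S^3$, gets this reduction from \cite{Sco} without needing Perelman), lift through $\Phi\colon\mathrm{SU}(2)\times\mathrm{SU}(2)\to\mathrm{SO}(4)$, invoke Klein's list of finite subgroups of $\mathrm{SU}(2)$, and detect fixed points of $x\mapsto axb^{-1}$ via conjugacy (equivalently, equality of traces) of $a$ and $b$. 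All of these steps are correct as stated.

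That said, the substantive content of the theorem sits precisely in the part you defer as ``book-keeping,'' so your proposal is a correct plan rather than a proof. Three points any complete write-up must confront. First, the Goursat analysis is heavily constrained by the elementary observation that an orthogonal involution of $S^3$ without fixed points must be $-\mathrm{id}$; hence $\Gamma$ has at most one element of order two, and when $|\Gamma|$ is even one gets $(1,-1)\in\widetilde{\Gamma}=\Phi^{-1}(\Gamma)$ and $-1\in L\cap R$, which is what pins down the admissible fibre products. Second, freeness is a condition on \emph{every} nontrivial element of the subgroup, not just on generators; it is this global condition (together with the resulting exclusion of subgroups isomorphic to $\ZZ/p\times\ZZ/p$) that forces the coprimality in the direct-product case and restricts the twisted extensions to exactly the $D_{2^m(2n+1)}$ and $P'_{8\cdot 3^m}$ families. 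Third, the theorem says ``exactly one of the following forms,'' so you also owe the verification that the listed families are pairwise non-isomorphic; you flag this but do not address it. None of this is an error, but as written the proposal reproduces the frame of Hopf's classification while leaving its core case analysis unexecuted.
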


The subscripts in the notation for the groups above always denote their order. We are now ready to state the main result of this section. 

\begin{proposition}\label{prp_spherical} A spherical manifold is exceptional if and only if its fundamental group is of one of the following forms:
\begin{itemize}
\item The trivial group,
\item $Q_8$,
\item $P_{120}$,
\item $D_{2^m(2n+1)}$ for $m\geq 2$ and $n\geq 1$,
\item $P_{8\cdot 3^m}'$ for $m\geq 1$,
\item the direct product of any of the above groups with a cyclic group of relatively prime order.
\end{itemize}
\end{proposition}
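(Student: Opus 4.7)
The plan is to leverage the classical fact that an orientable closed spherical $3$-manifold is determined up to homeomorphism by its fundamental group (see e.g.\ \cite{Sco}), so that exceptionality of $M$ becomes the purely group-theoretic statement that any two subgroups of $G = \pi_1(M)$ of the same index are isomorphic. Theorem~\ref{thm_sphericalclassification} presents each candidate $G$ as a direct product $G_0 \times \ZZ/k$ with $G_0$ one of six atomic types and $\gcd(|G_0|,k)=1$, which reduces the proposition to a direct-product reduction followed by an atom-by-atom analysis.

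The reduction is immediate: when $\gcd(|G_0|,k)=1$, Goursat's lemma forces every subgroup of $G_0 \times \ZZ/k$ to split as $H \times K$ (a common quotient of subgroups of $G_0$ and $\ZZ/k$ would have order dividing both $|G_0|$ and $k$, hence be trivial), its index factors uniquely as $[G_0:H] \cdot [\ZZ/k:K]$ since the factors divide the coprime integers $|G_0|$ and $k$, and subgroups of $\ZZ/k$ of fixed index coincide. Hence $G_0 \times \ZZ/k$ is exceptional if and only if $G_0$ is.

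For the non-exceptional atoms: the abelianisation of $Q_{8n}$ is $\ZZ/2 \oplus \ZZ/2$ (imposing $[x,y]=1$ together with $xyx^{-1}=y^{-1}$ yields $y^2=1$ and then $x^2 = y^{2n} = 1$), so there are three index-$2$ subgroups: the cyclic $\langle y\rangle \cong \ZZ/4n$ and two generalised dicyclic subgroups isomorphic to $Q_{4n}$, which are non-abelian for $n \ge 2$. For $P_{48}$, its Sylow $2$-subgroup $Q_{16}$ contains both $\ZZ/8$ and $Q_8$ as index-$2$ subgroups, producing two non-isomorphic subgroups of index $6$ in $P_{48}$.

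The exceptional atoms --- trivial, $Q_8$, $P_{120}$, $D_{2^m(2n+1)}$, and $P_{8\cdot 3^m}'$ --- are verified by directly enumerating subgroup isomorphism classes at each index. The trivial group and $Q_8$ are immediate. For $P_{120} = \mathrm{SL}(2,\mathbb F_5)$ one invokes the classical list of its subgroups (cyclic $\ZZ/n$ for $n \in \{1,2,3,4,5,6,10\}$, dicyclic $Q_8, Q_{12}, Q_{20}$, and binary tetrahedral $P_{24}'$) together with the uniqueness of the central involution and the restriction of element orders to $\{1,2,3,4,5,6,10\}$, which exclude every duplicate isomorphism class at each admissible index. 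For $D_{2^m(2n+1)} = \ZZ/(2n+1) \rtimes \ZZ/2^m$ with inversion action, each subgroup $L$ is determined by the pair $(c, a')$ with $c := |L \cap \ZZ/(2n+1)|$ and $a' := |\pi(L)|$ for the projection $\pi\colon G \to \ZZ/2^m$; since the action factors through $\ZZ/2$, one shows that $L$ is cyclic of order $ca'$ when $a' \mid 2^{m-1}$, and is either $\ZZ/2^m$ (if $c=1$) or the generalised dicyclic group $D_{2^m c}$ (if $c>1$) when $a' = 2^m$. Even indices fall in the first case and odd ones in the second, each realised by a unique isomorphism class. The same scheme applies to $P_{8\cdot 3^m}' = Q_8 \rtimes \ZZ/3^m$: the action of $\ZZ/3^m$ factors through $\ZZ/3$ and cyclically permutes the three cyclic order-$4$ subgroups of $Q_8$, so for any subgroup $L$ that projects non-trivially modulo $3$ the intersection $L \cap Q_8$ must be a characteristic subgroup ($\{1\}$, $\{\pm 1\}$, or $Q_8$), yielding a classification of subgroups into cyclic groups $\ZZ/(2^a 3^b)$ and twisted extensions $Q_8 \rtimes \ZZ/3^k$ with one isomorphism class per index. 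The main obstacle is the bookkeeping in these last two families, which requires carefully tracking whether the image of a subgroup in the outer cyclic factor lies in the kernel of the action on the normal subgroup.
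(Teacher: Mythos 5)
Your group-theoretic analysis is essentially sound and in places (the semidirect-product bookkeeping for $D_{2^m(2n+1)}$ and $P'_{8\cdot 3^m}$, the Goursat reduction for coprime direct products) arguably cleaner than the paper's explicit element manipulations; the non-exceptionality arguments for $Q_{8n}$ ($n\ge 2$) and $P_{48}$ also match the paper's. However, your opening premise is false, and this creates a genuine gap in the ``if'' direction. A closed orientable spherical $3$-manifold is \emph{not} determined up to homeomorphism by its fundamental group: lens spaces are the well-known exception ($L(7,1)$ and $L(7,2)$ both have $\pi_1\cong\ZZ/7$ but are not homeomorphic, by Theorem~\ref{theorem:Reidemeister}). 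The correct statement (Lemma~\ref{lemma:fundgp}) only identifies isomorphism of subgroups with homeomorphism of covers when the subgroups are \emph{not} finite cyclic. Consequently, showing that all index-$n$ subgroups of $\pi_1(M)$ are isomorphic does not yet show that the corresponding covers are homeomorphic when those subgroups are cyclic --- the covers are then lens spaces, and two lens spaces with the same fundamental group need not be homeomorphic.

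This is not a vacuous worry in your cases: $Q_8$ has three \emph{non-conjugate} subgroups isomorphic to $\ZZ/4$, $P_{120}$ has several subgroups of order $5$ and of order $10$, and $P'_{8\cdot 3^m}$ has many cyclic subgroups of each admissible order. To close the gap you must additionally show, for each cyclic isomorphism type occurring, either that all subgroups of that type are conjugate in the ambient group (conjugate subgroups give homeomorphic covers --- e.g.\ Sylow conjugacy for the order-$5$ subgroups of $P_{120}$ and the order-$2^m$ subgroups of $D_{2^m(2n+1)}$, plus centrality of the unique involution to handle order $10$), or that there is a unique lens space with that fundamental group (Theorem~\ref{theorem:Reidemeister} gives uniqueness for $\pi_1\cong\ZZ/2,\ZZ/3,\ZZ/4,\ZZ/6$, which rescues the $\ZZ/4$ subgroups of $Q_8$). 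This extra layer of conjugacy/Reidemeister arguments is exactly where the paper's proof spends most of its effort, and your proposal omits it entirely.
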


Before giving the proof, we gather a few relevant facts. Recall that the fundamental group determines a spherical manifold unless it is cyclic and non-trivial (see \cite[p.\ 133]{Orlik:1972-1}). This fact, for the larger class of closed, irreducible $3$-manifolds, yields the following lemma (see~\cite[Theorem~2.1.2]{AFW}). 

\begin{lemma}\label{lemma:fundgp} Let $M$ be a closed, orientable, irreducible $3$-manifold. Let $G$ and $H$ be finite index subgroups of $\pi_1(M)$ and let $\widehat{M_G}$ and $\widehat{M_H}$ be covers of $M$ that correspond to $G$ and $H$, respectively. Suppose $G$ is not a finite cyclic group, then $G$ and $H$ are isomorphic if and only if $\widehat{M_G}$ and $\widehat{M_H}$ are homeomorphic.
\end{lemma}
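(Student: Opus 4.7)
The forward direction is immediate: homeomorphic manifolds have isomorphic fundamental groups. My plan is therefore to focus on the reverse direction, showing that if $G \cong H$ (with neither necessarily finite cyclic), then the covers $\widehat{M_G}$ and $\widehat{M_H}$ are homeomorphic. I would begin by observing that finite covers of a closed, orientable, irreducible $3$-manifold are again closed, orientable and irreducible (the latter using e.g.\ the equivariant sphere theorem, or simply the fact that in our setting below the cover is either aspherical or spherical). Then I would split into two cases according to whether $\pi_1(M)$ is finite or infinite, using geometrization to say that in the infinite case $M$ is aspherical while in the finite case $M$ is spherical.

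\textbf{Infinite case.} Here $M$ is aspherical, so both covers $\widehat{M_G}$ and $\widehat{M_H}$ are also closed, orientable, aspherical $3$-manifolds, and are thus Eilenberg--MacLane spaces for $G$ and $H$ respectively. The plan is then to invoke the fact (standard consequence of geometrization together with Mostow rigidity, Waldhausen's theorem in the Haken case, and Scott's theorem for Seifert fibered spaces; as cited in the paper as \cite[Theorem~2.1.2]{AFW}) that closed, orientable, aspherical $3$-manifolds are determined up to homeomorphism by their fundamental groups. An isomorphism $G \cong H$ therefore yields a homeomorphism $\widehat{M_G} \cong \widehat{M_H}$. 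Note that in this case the hypothesis ``$G$ not finite cyclic'' is automatic, since $G$ is infinite.

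\textbf{Finite case.} Now $\pi_1(M)$ is finite, so $M$ is spherical and the universal cover is $S^3$; hence every finite cover $\widehat{M_G}$ is itself a spherical $3$-manifold with fundamental group $G$. By Theorem \ref{thm_sphericalclassification} (Hopf's classification), the isomorphism type of the fundamental group of a spherical $3$-manifold determines the manifold uniquely, \emph{except} when the fundamental group is cyclic (the lens-space ambiguity $L(p,q)$ vs.\ $L(p,q')$). Since by hypothesis $G$ is not finite cyclic and $G \cong H$, neither group is cyclic, so this exceptional case does not occur, and an isomorphism $G \cong H$ yields a homeomorphism $\widehat{M_G} \cong \widehat{M_H}$.

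The only real subtlety is identifying exactly why the ``$G$ not finite cyclic'' hypothesis is needed: it precisely rules out the lens space ambiguity which is the unique failure of ``$\pi_1$ determines the manifold'' among closed orientable irreducible $3$-manifolds. Apart from that, the proof is an assembly of the cited rigidity and classification results.
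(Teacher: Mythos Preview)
Your proposal is correct and follows exactly the route the paper indicates: the paper does not give a written-out proof but simply cites \cite[Theorem~2.1.2]{AFW} for the aspherical case and \cite[p.~133]{Orlik:1972-1} for the spherical case, which is precisely the two-case split (infinite vs.\ finite $\pi_1$) you carry out. Your identification of the cyclic hypothesis as exactly the lens-space exception is also what the paper intends.
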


The case of spherical manifolds with cyclic fundamental groups, namely, lens spaces, is more subtle. We recall the following well known result of Reidemeister \cite{Reidemeister:1935-1}.
\begin{theorem}\label{theorem:Reidemeister} Let $L(p, q)$ and $L(p, q')$ be two lens spaces. Then $L(p, q)$ and $L(p, q')$ are homeomorphic if and only if $q' \equiv \pm q^{
\pm1} \mod p$.
\end{theorem}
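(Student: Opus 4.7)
The plan is to prove the two directions separately, treating the easy direction by explicit construction and the hard direction by means of Reidemeister torsion, which was Reidemeister's original method and which remains the most direct route.

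For the ``if'' direction I would realize the four symmetries $q \mapsto \pm q^{\pm 1}$ by exhibiting self-homeomorphisms of $S^3$ that intertwine the relevant cyclic actions. Writing $L(p,q) = S^3/(\ZZ/p)$, where $S^3 \subset \CC^2$ and the generator acts by $(z_1,z_2) \mapsto (\zeta_p z_1, \zeta_p^q z_2)$ with $\zeta_p = e^{2\pi i/p}$, the map $(z_1,z_2)\mapsto(z_2,z_1)$ conjugates this to the action with exponent $q^{-1} \bmod p$, yielding a homeomorphism $L(p,q)\cong L(p,q^{-1})$. Similarly, $(z_1,z_2)\mapsto(\overline{z_1},z_2)$ conjugates to exponent $-q$, giving $L(p,q)\cong L(p,-q)$. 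Composing these handles all four cases.

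For the ``only if'' direction I would use Reidemeister torsion $\Delta$ as a topological invariant. Using the standard CW-decomposition of $L(p,q)$ pulled back from the equivariant decomposition of $S^3$, a direct computation of the cellular chain complex over $\ZZ[\ZZ/p]$ shows that, after fixing a nontrivial character $\chi\colon \ZZ/p \to \CC^\times$, $\chi(t)=\zeta_p^a$, one gets
\[
\Delta_\chi(L(p,q)) \;=\; (\zeta_p^a - 1)(\zeta_p^{aq^*} - 1)
\]
up to multiplication by units of the form $\pm\zeta_p^k$, where $qq^*\equiv 1 \bmod p$. The point is that the indeterminacy in the choice of cell orientations, basepoint, and generator of $\ZZ/p$ exactly accounts for the ambiguity $q \leftrightarrow \pm q^{\pm 1}$: varying the generator by $a$ replaces $q$ by $aq$ in the exponent, so ranging over all $\chi$ one recovers the unordered multiset $\{\zeta_p^a - 1, \zeta_p^{aq^{-1}}-1 : a\}$ up to the sign/rotation indeterminacy, and this multiset determines $q$ modulo the equivalence $q \sim \pm q^{\pm 1} \bmod p$. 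Since Reidemeister torsion is a homeomorphism invariant (in fact a simple-homotopy invariant), a homeomorphism $L(p,q)\cong L(p,q')$ forces the two torsion elements to agree, hence $q' \equiv \pm q^{\pm 1} \bmod p$.

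The main obstacle will be the combinatorial bookkeeping of the torsion computation: one must correctly track how torsion depends on the choice of generator for $\ZZ/p$ and how the normalization up to units $\pm\zeta_p^k$ dictates precisely the symmetry group $\{q,-q,q^{-1},-q^{-1}\}$ and no smaller set. Once this normalization is pinned down, the number-theoretic step, namely that $\zeta_p^{aq}-1$ and $\zeta_p^{aq'}-1$ are associate in $\ZZ[\zeta_p]$ up to the allowed units if and only if $q'\equiv \pm q^{\pm 1}\bmod p$, is the well-known computation of units in cyclotomic integers that Franz carried out to complete Reidemeister's argument. Since this theorem is classical and merely quoted for later use, I would likely just cite \cite{Reidemeister:1935-1} (and Franz) rather than reproduce the full calculation.
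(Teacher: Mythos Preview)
The paper does not prove this theorem; it simply recalls it as a well-known result of Reidemeister and cites \cite{Reidemeister:1935-1}. Your outline follows exactly the classical Reidemeister--Franz argument (explicit homeomorphisms for the ``if'' direction, Reidemeister torsion together with Franz's lemma on cyclotomic units for the ``only if'' direction), and your own closing remark that you would cite rather than reproduce the computation matches the paper's treatment precisely.
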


\noindent We are now ready to prove Proposition~\ref{prp_spherical}.

\begin{proof}[Proof of Proposition~\ref{prp_spherical}] 
For each of the manifolds listed in Theorem~\ref{thm_sphericalclassification}, we will follow one of the following two strategies. 
\begin{itemize}
\item In order to show that a given manifold is not exceptional, we will show that its fundamental group has two non-isomorphic subgroups with the same index. 
\item In contrast, in order to show that a given manifold is exceptional, we will first show that its fundamental group has a unique isomorphism type of subgroup with any fixed index. By Lemma~\ref{lemma:fundgp}, this implies that it only remains to consider the case when the subgroups of a given index are all isomorphic to a fixed finite cyclic group. In this case, we will show that corresponding covers are homeomorphic, either by using Theorem~\ref{theorem:Reidemeister} or by showing that these subgroups are conjugate to each other. 
\end{itemize}

We divide our proof into multiple lemmata. In what follows we will often tacitly identify a manifold with its fundamental group. 

\noindent First we note:
\begin{lemma}
Any spherical $3$-manifold with cyclic fundamental group (namely, a lens space) is exceptional.
\end{lemma}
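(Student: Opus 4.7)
The plan is to leverage the fact that $\pi_1(L(p,q)) \cong \ZZ/p$ is finite cyclic, which makes the entire covering theory of a lens space transparent. First I would note that $\ZZ/p$ possesses a subgroup of index $n$ if and only if $n$ divides $p$, and in that case the subgroup is unique (it is the unique subgroup of order $p/n$, namely $(n\ZZ)/(p\ZZ)$). Since $\ZZ/p$ is abelian, every subgroup is normal, so distinct subgroups are never conjugate; combined with uniqueness this means that for each $n$ there is at most one equivalence class of connected $n$-sheeted covers of $L(p,q)$, and any two representatives of that class are homeomorphic as topological spaces.

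Consequently $e_n(L(p,q)) \in \{0,1\}$ for every $n$, and because the trivial $1$-sheeted cover always exists we also have $e_1(L(p,q)) = 1$; therefore $e_n'(L(p,q)) = 1$ for all $n$, which is precisely the definition of exceptionality. I do not foresee any genuine obstacle in this argument: the lemma is morally a specialization of Proposition~\ref{prop:specialones}, which already records that any manifold with cyclic fundamental group is exceptional. The point of reiterating it here is simply to isolate the lens space case cleanly, so that in the remainder of the proof of Proposition~\ref{prp_spherical} one may assume $\pi_1(M)$ is non-cyclic and freely invoke Lemma~\ref{lemma:fundgp} (rather than Theorem~\ref{theorem:Reidemeister}) when comparing covers.
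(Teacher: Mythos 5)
Your argument is correct and is essentially the paper's own proof, which simply observes that a cyclic group contains at most one subgroup of any given index, so each degree has at most one cover up to equivalence. The extra detail about normality and the reference back to Proposition~\ref{prop:specialones} is consistent with the paper's framing.
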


\begin{proof} 
This follows since a cyclic group contains at most one subgroup of a given index. 
\end{proof}

\begin{lemma} The spherical $3$-manifold with fundamental group $P_{48}$ is not exceptional while that with fundamental group $P_{120}$ is exceptional.
\end{lemma}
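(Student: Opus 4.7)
The plan is to analyze subgroups of each group via the spin double cover $SU(2)\to SO(3)$, which presents $P_{48}$ and $P_{120}$ as the binary lifts of $S_4$ (the rotation group of the cube) and $A_5$ (the rotation group of the icosahedron), respectively. In each of $P_{48}$ and $P_{120}$, the element $-1$ is the unique involution, so any subgroup of even order contains the center $Z=\{\pm 1\}$, and even-order subgroups correspond bijectively, via the projection, to subgroups of the image $S_4$ or $A_5$. This reduces the problem to a finite calculation, after which Lemma~\ref{lemma:fundgp} together with a conjugation argument in $SU(2)$ handles the cyclic case.

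For $P_{48}$, the goal is to produce two non-isomorphic subgroups of index~$6$ (equivalently, order $8$). Inside $S_4$ there are both a cyclic subgroup $\ZZ/4$ (generated by a quarter-turn of the cube) and a Klein four subgroup $V_4$. An order-$4$ element of $S_4$ lifts to an element of $P_{48}$ whose square is $-1$, hence of order $8$, so $\ZZ/4\subset S_4$ pulls back to a cyclic subgroup $\ZZ/8\subset P_{48}$. On the other hand, the three involutions in $V_4$ lift to three anticommuting elements of order $4$ squaring to $-1$, so $V_4$ pulls back to a copy of $Q_8\subset P_{48}$. Since $\ZZ/8\not\cong Q_8$, the two index-$6$ covers of $S^3/P_{48}$ corresponding to these subgroups have non-isomorphic fundamental groups and are therefore not homeomorphic, showing $P_{48}$ is not exceptional.

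For $P_{120}$, since $A_5$ has elements of orders $1,2,3,5$ only, its subgroups up to isomorphism are the trivial group, $\ZZ/2$, $\ZZ/3$, $\ZZ/5$, $V_4$, $S_3$, $D_5$, $A_4$, and $A_5$. Taking binary lifts one obtains subgroups of $P_{120}$ of isomorphism types trivial, $\ZZ/2$, $\ZZ/3$, $\ZZ/4$, $\ZZ/5$, $\ZZ/6$, $Q_8$, $\ZZ/10$, $Q_{12}$, $Q_{20}$, $P_{24}$, $P_{120}$ -- exactly one per possible order, hence one per index. For two subgroups $H_1,H_2\leq P_{120}$ of the same index, this list forces $H_1\cong H_2$. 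If they are non-cyclic, Lemma~\ref{lemma:fundgp} yields $S^3/H_1\cong S^3/H_2$. If they are cyclic of order $n$, then generators of the $H_i$ are elements of $SU(2)$ of order $n$, and any two such elements are $SU(2)$-conjugate to $\mathrm{diag}(e^{2\pi i/n},e^{-2\pi i/n})$; conjugation by an element of $SU(2)$ descends to a homeomorphism between the quotient lens spaces, so $S^3/H_1\cong S^3/H_2$ in this case too. Hence $P_{120}$ is exceptional.

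The main obstacle, though one that is essentially bookkeeping, is the subgroup enumeration of $P_{48}$ and $P_{120}$; the crucial contrast between the two cases is that $A_5$ contains no element of order $4$, which is exactly what prevents a copy of $\ZZ/8$ from appearing in $P_{120}$ alongside the $Q_8$ coming from $V_4$.
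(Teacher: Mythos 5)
Your proof is correct and arrives at the same two key facts as the paper --- $P_{48}$ contains both $\ZZ/8$ and $Q_8$ as index-$6$ subgroups, while $P_{120}$ has a unique isomorphism type of subgroup in each index --- but it gets there by a genuinely different, more self-contained route. The paper simply cites the known subgroup structure of the binary octahedral and icosahedral groups, whereas you derive it from the double cover $SU(2)\to SO(3)$ together with the observation that $-I$ is the unique involution in $SU(2)$, so that even-order subgroups are exactly the preimages of subgroups of $S_4$ resp.\ $A_5$; this is a clean and complete derivation. The more substantive difference is in the cyclic case for $P_{120}$: the paper invokes Theorem~\ref{theorem:Reidemeister} for orders $2,3,4,6$ and then separate Sylow and centrality arguments to show that the order-$5$ and order-$10$ subgroups are conjugate inside $P_{120}$, whereas you observe that any two cyclic subgroups of $SU(2)$ of the same order are conjugate in $SU(2)$ (each lies in a maximal torus, which has a unique subgroup of each finite order), so the corresponding quotients of $S^3$ are isometric. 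Your argument treats all orders uniformly and bypasses Reidemeister's classification entirely. One small point to tighten: it is the cyclic \emph{subgroups}, not arbitrary order-$n$ \emph{elements}, that are all conjugate to the standard diagonal one (order-$5$ elements of $SU(2)$, for instance, fall into two conjugacy classes distinguished by trace); since you only need conjugacy of the subgroups $H_1$ and $H_2$, this does not affect the validity of the proof.
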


\begin{proof}
The proper subgroups of $P_{48}$ and $P_{120}$ are well known (see e.g.~\cite[Appendix]{Lima-Guaschi:2013-1}). The group $P_{48}$ has $\Z/8$ and $Q_8$ as proper subgroups, and hence, the spherical manifold with fundamental group $P_{48}$ is not exceptional. The group $P_{120}$ has at most one isomorphism type of subgroup of any given index, and those of order $2,3,4,5,6$, and $10$ are isomorphic to finite cyclic groups. Note that by Theorem~\ref{theorem:Reidemeister} there is a unique $3$-manifold with fundamental group with order $2,3,4,$ or $6$. Any two proper subgroups of $P_{120}$ of order $5$ are Sylow $5$-subgroups and hence conjugate to each other. Also, it is known that any order two element of $P_{120}$ is contained in the center. Since the order $5$ subgroups are conjugate, this implies that the order $10$ subgroups are also conjugate to one another. Thus, we see that the spherical manifold with fundamental group $P_{120}$ (namely the Poincar\'{e} homology sphere) is exceptional. 
\end{proof}

All that remains are the three infinite sequences and products with cyclic groups of coprime order. We start  with the groups $Q_{8n}$:

\begin{lemma}
The spherical $3$-manifold with fundamental group $Q_{8n}$ is exceptional if and only if $n=1$.
\end{lemma}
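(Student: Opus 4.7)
The plan is to split on the value of $n$: for $n \ge 2$ I will exhibit two non-isomorphic subgroups of $Q_{8n}$ of the same index (showing non-exceptionality), while for $n = 1$ I will classify the subgroups of $Q_8$ by hand and invoke Reidemeister's theorem for lens spaces (showing exceptionality).

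For $n \ge 2$ I focus on subgroups of index two. From the presentation one extracts the relations $xyx^{-1} = y^{-1}$, $x^2 = y^{2n}$, and $y^{4n} = 1$, so $\langle y \rangle \cong \Z/4n$ is one index-two subgroup. A second one is $H := \langle y^2, x \rangle$: setting $a = y^2$ and $b = x$ one reads off the standard presentation $\langle a, b \mid a^{2n} = 1,\ b^2 = a^n,\ bab^{-1} = a^{-1} \rangle$ of the dicyclic group of order $4n$. For $n \ge 2$ this group is non-abelian, since $a$ has order $2n \ge 4$ and is inverted (not centralised) by $b$; in particular $H \not\cong \Z/4n$. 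Therefore the two corresponding degree-two covers of $S^3/Q_{8n}$ have non-isomorphic fundamental groups and so cannot be homeomorphic.

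For $n = 1$, a direct enumeration of $Q_8 = \{\pm 1, \pm i, \pm j, \pm k\}$ shows its subgroups are the trivial group, the centre $\{\pm 1\}$, the three cyclic subgroups $\langle i \rangle, \langle j \rangle, \langle k \rangle$ of order four, and $Q_8$ itself; in particular all subgroups of a fixed index are abstractly isomorphic. The subgroups of index one, four, and eight are unique, so the corresponding covers are immediately determined. Only index two requires more care, since there are three cyclic order-four subgroups and Lemma~\ref{lemma:fundgp} does not apply to finite cyclic subgroups. The three resulting covers are lens spaces $L(4,q)$ with $\gcd(q,4)=1$, hence $q\in\{1,3\}$, and since $3 \equiv -1 \pmod 4$ Theorem~\ref{theorem:Reidemeister} gives $L(4,1)\cong L(4,3)$. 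Hence all three index-two covers are homeomorphic and $Q_8$ is exceptional. The only place where the argument leaves pure subgroup enumeration is this lens-space identification in the case $n=1$; the case $n \ge 2$ is essentially a short group-theoretic computation.
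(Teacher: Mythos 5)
Your proof is correct and follows essentially the same route as the paper: for $n\ge 2$ you exhibit the cyclic index-two subgroup $\langle y\rangle$ and the non-abelian index-two subgroup $\langle y^2,x\rangle$ (which coincides with the paper's $\langle\langle x\rangle\rangle$), and for $n=1$ you enumerate the subgroups of $Q_8$ and dispose of the three order-four cyclic subgroups via Theorem~\ref{theorem:Reidemeister}. The only difference is cosmetic: you identify the non-abelian subgroup as a dicyclic group rather than verifying non-commutativity by a direct relation chase, and you write out the $n=1$ case that the paper leaves as "a similar argument as above."
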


\begin{proof}
The group $Q_8$ is the quaternion group, the proper subgroups of which are all cyclic with either order $2$ or $4$. Using a similar argument as above, one can show that the spherical manifold with fundamental group $Q_8$ is exceptional. Next, we prove that for $n>1$, the group $Q_{8n}$ is not exceptional. In particular, we will show that these groups have two non-isomorphic subgroups of index $2$. Let $N_1 = \langle\langle y\rangle\rangle$ and $N_2=\langle\langle x\rangle\rangle$ be the subgroups normally generated by $y$ and $x$ respectively. It is easy to verify that both $N_1$ and $N_2$ are subgroups of index $2$. Note that $N_1$ is cyclic since $xyx^{-1}=y^{-1}$ in $Q_{8n}$. Since $N_1$ has index $2$, the order of $N_1$ is $4n$ and in particular, $y$ has order $4n$ in $Q_{8n}$. On the other hand, we will show that $N_2$ is non-abelian. Using the $x=yxy$ relation, it is easy to see that $yxy^{-1}\cdot x^{-1} = y^{2} \in N_2$. Now suppose that $y^2xy^{-2}=x$. Since $x^2=(xy)^2$, and hence $x=yxy$, we see that $y^4x=x$, and thus, that the order of $y$ is at most $4$, which contradicts our previous observation, since $n>1$. Thus, the groups $Q_{8n}$ for $n>1$ are not exceptional. 
\end{proof}

\noindent For the dihedral groups we have:
\begin{lemma}
The spherical $3$-manifold with fundamental group $D_{2^m(2n+1)}$ are exceptional for all $m\geq 2$ and $n\geq 1$.
\end{lemma}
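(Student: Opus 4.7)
The plan is to show that for each index $N$ dividing $|D|$, where $D := D_{2^m(2n+1)}$, any two subgroups of $D$ of index $N$ are either equal or conjugate in $D$; the corresponding covers of $S^3/D$ are then homeomorphic via a deck transformation, so the manifold is exceptional. This is stronger than the two-step strategy outlined at the start of the proof of Proposition~\ref{prp_spherical}, but it handles the ``same isomorphism type'' step and the ``cyclic subgroups give homeomorphic lens spaces'' step simultaneously, bypassing any appeal to Theorem~\ref{theorem:Reidemeister}.

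The starting point is the observation that $C := \langle x^2, y\rangle$ is a cyclic subgroup of $D$ of order $2^{m-1}(2n+1)$ and index $2$: cyclic because $x^2$ commutes with $y$ and $\gcd(2^{m-1}, 2n+1) = 1$. I will classify subgroups $H \le D$ according to whether or not $H \subseteq C$. If $H \subseteq C$ then $H$ is cyclic (being a subgroup of a cyclic group) and is uniquely determined by its order. If $H \not\subseteq C$, then $H$ contains some $x^a y^b$ with $a$ odd; using $xyx^{-1}=y^{-1}$ one computes $(x^a y^b)^{2s+1} = x^{(2s+1)a} y^b$ for every $s$, and choosing $s$ so that $(2s+1)a \equiv 1 \pmod{2^m}$ (possible since $a$ is odd) extracts an element $xy^i \in H$, which has order exactly $2^m$ because $(xy^i)^2 = x^2$. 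Writing $H \cap \langle y\rangle = \langle y^{(2n+1)/d}\rangle$ with $d \mid 2n+1$, a quick index count (the image of $H$ in $D/\langle y\rangle = \ZZ/2^m$ is all of $\ZZ/2^m$) then yields $H = \langle xy^i,\, y^{(2n+1)/d}\rangle$ of order $2^m d$.

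The classification splits subgroups of a given order $s$ into two cases according to the $2$-adic valuation of $s$. If the $2$-part of $s$ is strictly less than $2^m$, then every subgroup of order $s$ must lie inside $C$ and hence coincide with the unique subgroup of the cyclic group $C$ of that order. If the $2$-part of $s$ equals $2^m$, write $s = 2^m d$ with $d \mid 2n+1$; every subgroup of this order has the form $\langle xy^i, y^{(2n+1)/d}\rangle$, and conjugation by $y^j$ sends $xy^i$ to $xy^{i-2j}$ while fixing $y^{(2n+1)/d}$. Since $2n+1$ is odd, $-2j$ runs over all residues modulo $2n+1$, so every such subgroup is conjugate to $\langle x, y^{(2n+1)/d}\rangle$. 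The only mildly delicate point is the reduction in the second case from an arbitrary $x^a y^b \in H$ with $a$ odd to the normal-form generator $xy^i$; once this is established, the rest is bookkeeping with the semidirect product structure.
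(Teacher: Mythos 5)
Your argument is correct, and it takes a genuinely different (and in fact stronger) route than the paper's. The paper follows the two-step template announced at the start of the proof of Proposition~\ref{prp_spherical}: it first shows, via a fairly lengthy case analysis on the restriction of $D_{2^m(2n+1)}\to\ZZ/2$, that every subgroup is either cyclic or of the form $D_{2^m(2n'+1)}$, so that subgroups of a fixed index share one isomorphism type; it then invokes Lemma~\ref{lemma:fundgp} for the non-cyclic subgroups and disposes of the cyclic ones by uniqueness inside $N_2=\langle x^2,y\rangle$ or by Sylow conjugacy. You instead prove the sharper statement that any two subgroups of the same index are equal or conjugate, which yields homeomorphic covers immediately and bypasses both Lemma~\ref{lemma:fundgp} and Theorem~\ref{theorem:Reidemeister}. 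The mechanism is also different: where the paper extracts a generator $y^ix^{2k+1}$ of a dihedral subgroup through a chain of divisibility arguments, you use the identity $(x^ay^b)^{2s+1}=x^{(2s+1)a}y^b$ (valid since $(x^ay^b)^2=x^{2a}$ for $a$ odd) together with the invertibility of $a$ modulo $2^m$ to normalize a generator to $xy^i$ in one stroke, and then the oddness of $2n+1$ makes conjugation by powers of $y$ transitive on the possible $i$. The trade-off is that your argument leans on the explicit semidirect-product normal form and so is specific to this family, whereas the paper's isomorphism-type bookkeeping is closer in spirit to how it treats the other spherical groups; but as a self-contained proof of this lemma yours is cleaner, and the extra conclusion (conjugacy rather than mere isomorphism of subgroups) is worth recording.
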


\begin{proof}
We will invoke some Sylow theory. First, note that the subgroup generated by $x\in D_{2^m(2n+1)}$ is a Sylow $2$-subgroup of $D_{2^m(2n+1)}$ and is isomorphic to $\Z/2^m$. It is also easy to show that the abelianization of $D_{2^m(2n+1)}$ is isomorphic to $\Z/2^m$, and is generated by the image of $x$. Let $N_1=\langle\langle y\rangle\rangle$ be the subgroup of $D_{2^m(2n+1)}$ normally generated by $y$. Note that $N_1$ is cyclic, since $xyx^{-1} = y^{-1}$. Thus, we have the following exact sequence of groups 
\[1\to N_1\cong \Z/(2n+1) \to D_{2^m(2n+1)} \to \Z/2^m \cong \langle x\rangle \to 1 \]
corresponding to the abelianization map.  
Further, let $N_2 = \langle y, x^2 \rangle$. Since $x^2$ commutes with $y$, we have the following exact sequence
\[1\to N_2 \cong \Z/(2n+1) \times \Z/2^{m-1} \to D_{2^m(2n+1)} \to \Z/2\cong\{0,x\} \to 1.\]
Clearly, $N_2$ is cyclic of order $2^m(2n+1)$ and is generated by $yx^2$.

Let $\Gamma\leq D_{2^m(2n+1)}$ be a subgroup and $\phi$ be the restriction to $\Gamma$ of the homomorphism $D_{2^m(2n+1)} \to \Z/2$ in the last sequence. We first consider the case where $\phi$ is a surjection. Then $\Gamma$ must also surject onto $\Z/2^m$ in the abelianization map, since every map from $D_{2^m(2n+1)}$ to an abelian group factors through the abelianization. There are two options -- either $\card{\Gamma} = 2^m$ or $\card{\Gamma}>2^m$. In the first case, $\Gamma$ is cyclic and conjugate to $\langle x \rangle$ by Sylow's theorem. If $\card{\Gamma}>2^m$, we now show that $\Gamma$ must be isomorphic to $D_{2^m(2n'+1)}$ for some $n'\leq n$. 

First, note that due to the relation $xyx^{-1}=y^{-1}$, any element of $D_{2^m(2n+1)}$ can be written as $y^ix^j$ for some $0\leq i\leq 2n$ and $0\leq j\leq 2^m-1$. 
Recall that $N_2=\langle yx^2\rangle$. Then $\ker \phi = \Gamma\cap N_2=\Gamma \cap \langle yx^2\rangle$ and thus, $\ker \phi=\langle (yx^2)^d\rangle=\langle y^dx^{2d}\rangle$ for some $d\geq 1$. Since $[\Gamma : \langle y^dx^{2d} \rangle ] = 2$ and $\lvert \langle y^dx^{2d} \rangle \rvert = \frac{2^{m-1}(2n+1)}{d}$, we have $\card{\Gamma} = \frac{2n+1}{d}\cdot2^m$. Since $\card{\Gamma}>2^m$, we see that $d<2n+1$. 

Since $\Gamma$ surjects onto $\Z/2^m$, $\card{\Gamma}$ is divisible by $2^m$. As a result, $d$ divides $2n+1$. This implies that $d$ is odd and $\frac{2n+1}{d}$ is an integer. 
Note that $(y^dx^{2d})^\frac{2n+1}{d}=x^{2(2n+1)}$. Since $(2n+1, 2^{m-1})=1$, $x^{2(2n+1)}$ generates $\langle x^2\rangle$. As a result, $x^2\in \langle y^dx^{2d}\rangle$, and thus, $y^d\in\langle y^dx^{2d}\rangle$. 
Next, let $y^ix^j$ be an element of $\Gamma$ such that $y^ix^j\notin \langle y^dx^{2d}\rangle$. If no such element exists, then $\Gamma \leq N_2$ which is a contradiction. We show that $j$ is odd, as follows. Suppose for the sake of contradiction that $j$ is even. Since $x^2\in \langle y^dx^{2d}\rangle$ and $y^ix^j\notin \langle y^dx^{2d}\rangle$, $y^i\notin \langle y^dx^{2d}\rangle$. Since $y^d\in \langle y^dx^{2d}\rangle$, this implies that $i$ is not a multiple of $d$. In particular, $\gcd(i,d)<d$. Moreover, since $y^i\in\Gamma$ and $y^d\in\Gamma$, $y^{\gcd(i,d)}\in \Gamma$. 
However, note that the order of $y^{\gcd(i,d)}$ is $\frac{2n+1}{\gcd(i,d)}$ since the order of $y$ in $D_{2^m\cdot (2n+1)}$ is $2n+1$. Then, $\frac{2n+1}{\gcd(i,d)}$ divides $\card{\Gamma}=\frac{2n+1}{d}\cdot2^m$, which implies that $d$ divides $2^m\cdot \gcd(i,d)$. Since $d$ is odd, it must divide $\gcd(i,d)<d$ which is a contradiction. Thus, $j$ must be odd; denote $j$ by $2k+1$ for some $k$. 

We will now complete the proof by showing that the subgroup generated by $y^d$ and $y^ix^{2k+1}$ is equal to $\Gamma$ and isomorphic to $D_{2^m(2n'+1)}$ where $2n'+1=\frac{2n+1}{d}$. We have the following identities
\begin{align*}
(y^ix^{2k+1})^{2^m}&=(y^ix^{2k+1}y^ix^{2k+1})\cdots (y^ix^{2k+1}y^ix^{2k+1})\\
	&=x^{2k+1}\cdots x^{2k+1}\\\
	&=(x^{2k+1})^{2^m}\\
	&=1,\\
(y^d)^\frac{2n+1}{d}&=y^{2n+1}=1,\\
(y^ix^{2k+1})y^d(y^ix^{2k+1})^{-1}&=y^ix^{2k+1}y^dx^{-2k-1}y^{-i}\\
	&=y^ixy^dx^{-1}y^{-i}\\
	&=y^iy^{-d}dy^{-i}\\
	&=y^{-d},
\end{align*}
where we have used the facts that $x^2y^dx^{-2}=y^d$, $xyx^{-1}=y^{-1}$, and $yxy=x$. It is easy to check that any element of $\langle y^d, y^ix^{2k+1}\rangle$ can be uniquely expressed in the form $(y^d)^{i'}(x^{2k+1})^{j'}$, where $0\leq i'\leq \frac{2n+1}{d}-1$ and $0\leq j'\leq 2^m-1$ and thus, $\lvert \langle y^d, y^ix^{2k+1}\rangle\rvert =\card{\Gamma}=\frac{2n+1}{d}\cdot2^m$ which completes the argument.

When $\phi$ is not surjective, it must be the zero map and thus, $\Gamma$ is a subgroup of the cyclic subgroup $N_2 \cong  \Z/((2n+1)2^{m-1})$ which implies that it is cyclic.

Returning to the group $D_{2^m(2n+1)}$, we have now shown that subgroups of a given fixed index are isomorphic. More precisely, subgroups are of the form $D_{2^m(2n'+1)}$ for $n'\leq n$, or cyclic groups with order either $2^m$, or a factor of $(2n+1)2^{m-1}$. The latter arose as subgroups of a cyclic group and thus, occur exactly once. We saw earlier that any subgroup of order $2^m$ is a Sylow $2$-subgroup, and thus such subgroups are conjugate to one another. This concludes that spherical manifolds with fundamental group $D_{2^m(2n+1)}$ are exceptional for $m\geq 2$ and $n\geq 1$.
\end{proof}

\noindent For the sequence $P_{8\cdot 3^m}'$ we have:

\begin{lemma}
The spherical $3$-manifold with fundamental group $P_{8\cdot 3^m}'$ is exceptional for all $m\geq 1$.
\end{lemma}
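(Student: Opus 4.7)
The plan is to prove the stronger claim that any two subgroups of $G := P'_{8\cdot 3^m}$ of the same order are conjugate in $G$. Since conjugate subgroups of $\pi_1(M)$ correspond to equivalent, and a fortiori homeomorphic, covers of $M$, this immediately implies that the associated spherical manifold $M$ is exceptional, without needing to invoke Lemma~\ref{lemma:fundgp} or Theorem~\ref{theorem:Reidemeister} separately for cyclic and non-cyclic subgroups.

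First I would identify $G$ with the semidirect product $Q_8 \rtimes \Z/3^m$, in which $\langle x,y\rangle \cong Q_8$ is the kernel of the projection to $\Z/3^m$, and conjugation by $z$ cyclically permutes the three order-$4$ cyclic subgroups $\langle x\rangle, \langle y\rangle, \langle xy\rangle$ of $Q_8$. Two structural facts are then key: (i) $Q_8$ is the unique subgroup of order $8$ in $G$, and $-1$ is the unique element of order $2$; (ii) $z^3$ centralises $Q_8$, so $\langle z^3\rangle$ is central in $G$. Combined with the fact that the Sylow $3$-subgroups are not normal (since the action of $\Z/3^m$ on $Q_8$ is non-trivial), (ii) forces the Sylow $3$-core to satisfy $O_3(G) = \langle z^3\rangle$. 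This in turn implies that for each $0\le b\le m-1$ the group $G$ contains a \emph{unique} subgroup of order $3^b$, namely $\langle z^{3^{m-b}}\rangle$, while for $b = m$ the subgroups of order $3^m$ are the mutually conjugate Sylow $3$-subgroups of $G$.

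Next I would case-split on the order $2^a\cdot 3^b$ of an arbitrary subgroup $H\le G$. By the Schur--Zassenhaus theorem, $H = H_2\rtimes H_3$, where $H_2 := H\cap Q_8$ has order $2^a$ and $H_3\le G$ is a Sylow $3$-subgroup of $H$ of order $3^b$. When $a\in\{0,1,3\}$, the subgroup $H_2$ must be $\{1\}, \{\pm 1\}$, or $Q_8$ respectively, each of which is normal in $G$; hence $H = H_2\cdot H_3$ is determined by $H_3$, and the uniqueness/conjugacy of $H_3$ established above finishes these cases. The substantive case is $a=2$, in which $H_2$ is one of the three order-$4$ cyclic subgroups of $Q_8$ and a lift $g = qz^{3^{m-b}}$ of a generator of $H_3$ must normalise $H_2$.

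I expect the $a = 2$ case to be the main obstacle. For $b = m$ one has to rule out the existence of such a subgroup: a generator of a full Sylow $3$-subgroup acts on $Q_8$ via the order-$3$ automorphism cyclically permuting $\langle x\rangle, \langle y\rangle, \langle xy\rangle$, so no order-$4$ subgroup is stabilised. For $b\le m-1$, the centrality of $z^{3^{m-b}}\in\langle z^3\rangle$ yields $g^{3^b} = q^{3^b}$, and the requirement $g^{3^b}\in H_2$ combined with an element-by-element check (e.g.\ $(\pm y)^{3^b}\in\{\pm y\}\not\subset\langle x\rangle$) forces $q \in H_2$; hence $H = H_2 \cdot \langle z^{3^{m-b}}\rangle$ is determined by $H_2$, and conjugation by $z$ cyclically permutes the three possible choices of $H_2$ and hence the three corresponding subgroups of order $4\cdot 3^b$. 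This completes the case analysis and establishes the exceptionality of $M$.
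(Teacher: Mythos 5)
Your argument is correct, but it takes a genuinely different route from the paper's. You prove the stronger group-theoretic statement that any two subgroups of $G=P'_{8\cdot 3^m}$ of the \emph{same order} are conjugate, working entirely inside the semidirect product $Q_8\rtimes\Z/3^m$ via Schur--Zassenhaus, the centrality of $\langle z^3\rangle$, and Sylow theory; the case analysis checks out (in particular the non-existence of subgroups of order $4\cdot 3^m$, and the identification $H=H_2\times\langle z^{3^{m-b}}\rangle$ for $a=2$, $b\le m-1$). The paper instead first pins down the possible isomorphism types of subgroups by comparing against the classification in Theorem~\ref{thm_sphericalclassification}, observes that no two of the surviving types share an order, and then splits into the non-cyclic case, handled by Lemma~\ref{lemma:fundgp}, and the cyclic case, handled partly by conjugacy arguments and partly by Reidemeister's classification of lens spaces (Theorem~\ref{theorem:Reidemeister}) for the orders $2$ and $4$. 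Your approach buys self-containedness: since conjugate subgroups automatically yield equivalent covers, you bypass the appeal to the ambient classification of spherical space form groups, to Lemma~\ref{lemma:fundgp}, and to Theorem~\ref{theorem:Reidemeister} altogether, at the cost of carrying out the Schur--Zassenhaus and $O_3(G)$ bookkeeping by hand. One small streamlining: once you know that every $3$-subgroup of order $3^b$ with $b\le m-1$ equals the central subgroup $\langle z^{3^{m-b}}\rangle$, the Schur--Zassenhaus complement $H_3$ in the case $a=2$, $b\le m-1$ is already forced to be $\langle z^{3^{m-b}}\rangle\subseteq H$, so the lift-and-power computation with $g=qz^{3^{m-b}}$ is not actually needed.
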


\begin{proof} First note that $P_{8\cdot 3^m}'^{ab}\cong \Z/3^m$. Since $Q_8\cong \langle x, y\rangle < P_{8\cdot 3^m}'$ lies in the kernel of the abelianization map and $\card{Q_8}=8$, it must actually coincide with this kernel. Thus this copy of $Q_8$ is a normal subgroup and as such is the unique Sylow-2 subgroup of $P'_{8\cdot 3^m}$. We obtain the following exact sequence
\[1\to Q_8 \to P_{8\cdot 3^m}'\to \Z/3^m \to 1\]
corresponding to the abelianization map. 
Now let $\Gamma \leq P_{8\cdot 3^m}$. If the image of $\Gamma$ in the abelianization map is trivial, it needs to lie in $Q_8$, hence it is either $Q_8$, $\Z/2$, or $\Z/4$. 

For the case when the image of $\Gamma$ is non-trivial, we first show that $\Gamma$ is either a finite cyclic group or $Q_8\times \Z/3^j$ for some $0<j\leq m$. Since the order of $\Gamma$ divides the order of $P_{8\cdot 3^m}'$, we see that $\card{\Gamma}=2^i\cdot3^{j}$ for some $0\leq i \leq3, 0\leq {j} \leq m$. Since $\Gamma$ has non-trivial image in $P_{8\cdot 3^m}'^{ab}$, we see that ${j}\neq0$. With these restrictions, we see from the list in Theorem~\ref{thm_sphericalclassification} that the only possible groups are $D_{4\cdot3^j}$, $D_{8\cdot 3^j}$, $P'_{8\cdot 3^j}$, $Q_{8\cdot 3^j}$, $Q_8\times \Z/3^j$, $\Z/3^j$, $\Z/{2\cdot 3^j}$, $\Z/{4\cdot 3^j}$, and $\Z/{8\cdot 3^j}$, where $0<j\leq m$. 

It is straightforward to see that the abelianization of $D_{2^m\cdot (2n+1)}$ is $\Z/2^m$, and the abelianization of $Q_{8\cdot 3^j}$ is $\Z/2\times \Z/2$. Neither of these can surject onto $\Z/3^j$. Next, suppose that $8$ divides the order of $\Gamma$. Then $\Gamma$ has a unique Sylow-2 subgroup, namely $Q_8$, where the uniqueness follows from normality. Since the unique Sylow-2 subgroup of $\Z/8\cdot 3^j$ is cyclic, we see that $\Gamma\ncong  \Z/8\cdot 3^j$. So, if $8$ divides the order of $\Gamma$, we see that $\Gamma$ is either isomorphic to $P'_{8\cdot 3^j}$ or $Q_8\times \Z/3^j$. In these cases $\Gamma$ has a Sylow-3 subgroup, denoted by $\Gamma_3$. Recall that there is a Sylow-3 subgroup of $P'_{8\cdot 3^m}$ which is a copy of $\Z/3^m$ generated by $z$. Since any Sylow-3 subgroup of $\Gamma$ must be contained in some Sylow-3 subgroup of $P'_{8\cdot 3^m}$ and Sylow-3 subgroups of a given group are conjugate, there is some $g\in P'_{8\cdot 3^m}$ such that $g^{-1}z^{3^{m-j}}g$ generates $\Gamma_3 \leq \Gamma$, and thus, $z^{3^{m-j}}\in g\Gamma g^{-1}$ generates $g\Gamma_3 g^{-1}$. Note that the subgroup $g\Gamma g^{-1}\leq P'_{8\cdot 3^m}$ has order divisible by $8$, so as before, its Sylow-2 subgroup coincides  with the Sylow-2 subgroup of $P'_{8\cdot 3^m}$, namely the copy of $Q_8$ generated by $x$ and $y$. Since $z^3$ commutes with $<x,y>$, we see that $<x,y,z^{3^{m-j}}> \cong Q_8\times \Z/3^j \leq g\Gamma g^{-1}$ for $j\neq m$ and in fact, $\Gamma \cong Q_8\times \Z/3^j$ due to cardinality. We have thus reduced the possibilities for $\Gamma$ to $\Z/3^j$, $\Z/{2\cdot 3^j}$, $\Z/{4\cdot 3^j}$, and $Q_8\times \Z/3^j$, where $0<j\leq m$ when the image of $\Gamma$ is non-trivial.

Thus, the possible subgroups of $P'_{8\cdot 3^m}$ are of the form $\ZZ/2, \ZZ/4, Q_8, \ZZ/3^j, \ZZ/2\cdot3^j, \Z/4\cdot3^{j}, Q_8\times \Z/3^j$, where $0<j\leq m$. Again, no two distinct isomorphism types of subgroups have the same order, therefore we only need to consider the case of cyclic subgroups. By Theorem~\ref{theorem:Reidemeister} there is a unique $3$-manifold with fundamental group $\ZZ/2$ or with fundamental group $\ZZ/4$. For $\ZZ/3^j$, we know that this group is contained some Sylow-$3$ subgroup of $P_{8\cdot 3^m}'$, which is cyclic. Hence any two subgroups with order $3^j$ are conjugate to each other, and thus, correspond to isomorphic covering spaces. Suppose $\Gamma \cong \ZZ/2\cdot3^j\cong \ZZ/2\times \ZZ/3^j$. Note there is a Sylow-$3$ subgroup of $\Gamma$ which is contained in some Sylow-$3$ subgroup of $P_{8\cdot 3^m}'$ which is cyclic and conjugate to $\langle z \rangle$, as we saw earlier. Hence there is some $g \in P_{8\cdot 3^m}'$ such that $g^{-1}z^{3^{m-j}}g$ is contained in $\Gamma$. Similarly, any Sylow-2 subgroup of $\Gamma$ corresponds to a subgroup of order two within the copy of $Q_8$ in $P'_{8\cdot 3^m}$. There is a unique such subgroup, generated by $x^2$. Thus, since $\Gamma$ is cyclic, we see that $\Gamma = \langle x^2, g^{-1}z^{3^{m-j}}g \rangle$. Note that any two such subgroups are conjugate to each other since $x^2$ is central. 
Lastly, suppose $\Gamma \cong \Z/4\cdot3^{j}$. Let $\Gamma_2$ be a Sylow-$2$ subgroup of $\Gamma$. Then it is contained in $Q_8$, namely the Sylow-2 subgroup of $P'_{8\cdot 3^m}$, and consequently, it is either $\langle x \rangle, \langle y \rangle$, or $\langle xy \rangle$. Further, let $\Gamma_3$ be a Sylow-$3$ subgroup of $\Gamma$. Then again $g^{-1}z^{3^{m-j}}g$ is contained in $\Gamma$ for some $g \in P_{8\cdot 3^m}'$. We have now seen that $g\Gamma g^{-1}$ is either $\langle x, z^{3^{m-j}} \rangle, \langle y, z^{3^{m-j}} \rangle,$ or  $\langle xy, z^{3^{m-j}}\rangle$ and any two such subgroups are conjugate to each other since $z^{3^{m-j}}$ is central, and $x$, $y$ and $xy$ are conjugates. This completes the proof.
\end{proof}

\noindent Finally, we need to consider direct products with cyclic groups:

\begin{lemma} Let $G$ be a group from the statement of Theorem~\ref{thm_sphericalclassification} and $C$ a finite cyclic group so that $\gcd(\card{G},\card{C})=1$. Then $G\times C$ is exceptional if and only if $G$ is exceptional.
\end{lemma}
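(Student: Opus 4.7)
The key structural input is a clean description of subgroups of $G \times C$. Since $\gcd(|G|,|C|)=1$, every subgroup $K \leq G \times C$ decomposes uniquely as $K = \pi_G(K) \times \pi_C(K)$: given $(h,d) \in \pi_G(K) \times \pi_C(K)$, one can choose $(h,c),(g,d) \in K$ and observe that, as the orders of $\pi_G(K)$ and $\pi_C(K)$ are coprime, a suitable power of $(h,c)$ equals $(h,1)$, and similarly $(1,d) \in K$. Hence the index-$n$ subgroups of $G \times C$ are precisely the $H \times D_b$ where $H \leq G$ has index $a$, $D_b$ is the unique subgroup of $C$ of index $b$, and $n = ab$ with $a \mid |G|$, $b \mid |C|$ (the factorization being unique by coprimality). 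Moreover, because the Hall subgroup of order $|H|$ in $H \times D_b$ is unique and hence characteristic, we have $H_1 \times D_b \cong H_2 \times D_b$ if and only if $H_1 \cong H_2$.

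For the forward direction, $M_G$ is a degree-$|C|$ cover of $M_{G \times C}$, so Lemma~\ref{lem:nonexceptional-cover} implies that $M_{G \times C}$ exceptional forces $M_G$ exceptional.

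For the backward direction, assume $M_G$ is exceptional and fix two index-$n$ subgroups $H_1 \times D_b$ and $H_2 \times D_b$ of $G \times C$. If $H_i$ is non-cyclic, then so is $H_i \times D_b$, and Lemma~\ref{lemma:fundgp} applied to the exceptional $M_G$ yields $H_1 \cong H_2$. By the Hall-subgroup cancellation noted above this gives $H_1 \times D_b \cong H_2 \times D_b$, and a second application of Lemma~\ref{lemma:fundgp} to $M_{G \times C}$ produces the desired homeomorphism of covers.

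The subtle case, and main obstacle, is when $H_i$ is cyclic: then the covers are lens spaces and Lemma~\ref{lemma:fundgp} does not apply. If $G$ itself is cyclic then so is $G \times C$, and both are trivially exceptional; otherwise $G$ is non-cyclic and, from the list in Theorem~\ref{thm_sphericalclassification}, non-abelian. In this case, the centralizer of the $C$-action in $SO(4)$ must contain the non-abelian group $G$, which forces $C$ to sit in a very central position relative to the $G$-action on $S^3$. Concretely, for $G \subset SU(2)$ (for instance $G = Q_8$ or $P_{120}$), one may take $C$ to be a group of right quaternionic multiplications commuting with the entire left action of $SU(2)_L \supset G$, and then use that any two cyclic subgroups of $SU(2)$ of the same order are conjugate in $SU(2)_L$; this conjugation commutes with $C$ and therefore sends $H_1 \times D_b$ to $H_2 \times D_b$, producing a homeomorphism of the lens space covers. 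For the remaining exceptional groups $D_{2^m(2n+1)}$ and $P_{8\cdot 3^m}'$, a case-by-case inspection of the $SO(4)$-embedding is needed to locate analogous conjugating elements, or alternatively one invokes Theorem~\ref{theorem:Reidemeister} directly to compare the lens space parameters $q'$ of the two covers (using that, with $C$ acting by scalars, $q' \equiv q(H_i) \pmod{|H|}$ and $q' \equiv 1 \pmod{|D_b|}$, and that exceptionality of $M_G$ already forces $q(H_1) = q(H_2)$ for the exceptional spherical groups).
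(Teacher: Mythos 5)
Your group-theoretic core coincides exactly with the paper's proof, which consists of two sentences: subgroups of a direct product of coprime-order groups split as products of subgroups of the factors, and the cyclic factor contributes a unique subgroup of each index, so exceptionality is preserved in both directions. Your decomposition $K=\pi_G(K)\times\pi_C(K)$, the uniqueness of the factorization $n=ab$, and the Hall-subgroup cancellation $H_1\times D\cong H_2\times D\Leftrightarrow H_1\cong H_2$ are all correct and are precisely this argument; the use of Lemma~\ref{lem:nonexceptional-cover} for the forward direction is a clean alternative to running the subgroup correspondence backwards.

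Where you diverge is in pressing on the passage from isomorphic subgroups to homeomorphic covers when the subgroups are cyclic. This is a legitimate concern which the paper's terse proof does not discuss (it is implicitly absorbed by the framework set up at the start of the proof of Proposition~\ref{prp_spherical}), but your resolution of it is not complete. The conjugation argument is only sketched for $G\subset SU(2)$, and you explicitly defer "a case-by-case inspection of the $SO(4)$-embedding" for $D_{2^m(2n+1)}$ and $P'_{8\cdot 3^m}$. Moreover the Reidemeister fallback contains an error: exceptionality of the cover of $M_G$ gives $q(H_2)\equiv \pm q(H_1)^{\pm 1}$ modulo $|H|$, not $q(H_1)=q(H_2)$, and after gluing by the Chinese remainder theorem the sign is incompatible with the normalization $q'\equiv 1 \pmod{|D_b|}$ (since $-q'\equiv -1$, not $1$, mod $|D_b|$), so the glued parameters need not satisfy the relation of Theorem~\ref{theorem:Reidemeister} without further input. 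The efficient repair is to use what the earlier lemmata actually establish for each exceptional $G$ in Theorem~\ref{thm_sphericalclassification}: with the single exception of the order-$4$ subgroups of $Q_8$, same-order cyclic subgroups of an exceptional $G$ are conjugate \emph{inside} $G$, and conjugation by $(g,1)$ then carries $H_1\times D$ to $H_2\times D$; only the $Q_8$ case genuinely requires your geometric observation that $\langle x\rangle,\langle y\rangle,\langle xy\rangle$ are conjugate by an isometry (e.g.\ in the binary octahedral group acting on the left) commuting with the action of $C$.
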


\begin{proof}
Subgroups of direct products of finite groups with relatively prime order are direct products of subgroups in the factors. Hence, since the orders of the groups in the direct product need to be relatively prime and cyclic groups are exceptional, taking a direct product with a cyclic group preserves being exceptional or not. 
\end{proof}

\noindent We have now addressed each case in Theorem~\ref{thm_sphericalclassification} and thus, our proof is completed. 
\end{proof}
\section{The general Seifert fibered case}\label{sect:SF}

\noindent In this section, we prove the following result. 

\begin{proposition}\label{prop:seifert-fibered-for-leitfaden}
Closed orientable Seifert fibered $3$-manifolds, other than those finitely covered by $T^3$, $S^1\times S^2$ or $S^3$, are not exceptional.
\end{proposition}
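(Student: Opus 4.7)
The plan is to exploit the Seifert fibration of \( M \) together with the non-exceptionality results already established for 2-orbifold groups. Since \( M \) is not covered by \( S^3 \), \( S^1\times S^2 \), or \( T^3 \), its geometry is one of \( \mathrm{Nil} \), \( \widetilde{\mathrm{PSL}_2(\RR)} \), or \( \HH^2\times\RR \). In all three cases the Seifert fibration on \( M \) is unique up to fiber-preserving isotopy, and this uniqueness passes to every finite cover of \( M \). Writing \( O \) for the base 2-orbifold and \( \langle f\rangle \cong \ZZ \) for the subgroup generated by the regular fiber, one has the central extension
\[
1 \to \langle f\rangle \to \pi_1(M) \xrightarrow{p} \pi_1^{orb}(O) \to 1.
\]

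First I would dispose of the case in which \( O \) has at least one cone point. Then \( \pi_1^{orb}(O) \) has torsion, and is either a Fuchsian group or a 2-dimensional crystallographic group with nontrivial rotational part. By Proposition~\ref{fuchsian_distinct} or Proposition~\ref{proposition_euccase} respectively, for arbitrarily large \( n \) one can find two index-\( n \) subgroups \( H_1, H_2 \subset \pi_1^{orb}(O) \) whose signatures differ — concretely, for some integer \( m \) the numbers of conjugacy classes of maximal cyclic subgroups of order \( m \) in \( H_1 \) and \( H_2 \) disagree. The subgroups \( G_i := p^{-1}(H_i) \) are of index \( n \) in \( \pi_1(M) \), and the corresponding covers \( \widetilde{M}_i \) are Seifert fibered with base orbifolds of different signatures, so they have different collections of exceptional fibers. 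Combined with the uniqueness of the Seifert fibration, the Seifert classification yields \( \widetilde M_1 \not\cong \widetilde M_2 \).

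It remains to treat the case in which \( O \) is a closed surface without cone points. By passing to the 2-fold cover of \( M \) corresponding to the orientation double cover of \( O \) (if necessary) and invoking Lemma~\ref{lem:nonexceptional-cover}, I may assume that \( O \) is orientable, so \( O \) is either the torus \( T^2 \) — in which case the hypothesis forces \( M \) to be a nontrivial circle bundle with some Euler number \( e\neq 0 \) — or a closed surface \( \Sigma_g \) of genus \( g\ge 2 \) with an Euler number \( e \in \ZZ \). In either subcase, for each pair of positive integers \( (d,r) \) with \( r\mid de \), there is a finite-index subgroup of \( \pi_1(M) \) of index \( dr \) whose corresponding cover is Seifert fibered over a degree-\( d \) cover of \( O \) with Euler number \( de/r \); the Euler number transformation can be read off directly from the central extension by pulling back the relation \( \prod_i [a_i, b_i] = f^e \) (or \( [a,b]=f^e \) when \( O=T^2 \)). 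For any prime \( p \), the two pairs \( (d,r) = (p^2,1) \) and \( (d,r)=(p,p) \) both satisfy \( r\mid de \) and yield degree-\( p^2 \) covers with Euler numbers \( p^2 e \) and \( e \) respectively, and in the hyperbolic subcase also with different base genera \( p^2(g-1)+1 \) and \( p(g-1)+1 \). These invariants distinguish the two covers, so \( M \) is not exceptional.

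The principal obstacle in this plan is the passage from \emph{different Seifert invariants on the covers} to \emph{different homeomorphism types of the underlying manifolds}; this is exactly where the hypothesis that \( M \) is not finitely covered by \( S^3 \), \( S^1\times S^2 \), or \( T^3 \) is decisive, since it rules out the finite list of Seifert manifolds admitting non-isotopic Seifert fibrations. Once this uniqueness is in place, the remaining inputs — the two propositions on 2-orbifold groups, the direct computation of Euler number and base genus in covers, and the Seifert classification — are all available from earlier in the paper or from standard references.
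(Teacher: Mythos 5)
Your argument is essentially correct but follows a genuinely different route from the paper's. The paper first invokes the fact (\cite[p.~52 (C.10)]{AFW}) that every closed Seifert fibered manifold is finitely covered by an honest $S^1$-bundle over a closed \emph{orientable} surface, and then proves Proposition~\ref{prp_seif} only for such bundles; crucially, it distinguishes the two covers by computing $H_1(\widehat M)\cong \ZZ^{2g}\oplus\ZZ/e$ via the Gysin sequence, so that $|e|$ and $g$ are visibly homeomorphism invariants of the total space and \emph{no appeal to uniqueness of Seifert fibrations is needed}. You instead work directly over the base orbifold $O$ and, when $O$ has cone points, import the non-exceptionality of $\pi_1^{orb}(O)$ from Propositions~\ref{fuchsian_distinct} and~\ref{proposition_euccase}; this is a nice economy (it reuses the $2$-dimensional results and avoids the reduction to bundles in the torsion case), but it makes the Orlik--Vogt--Zieschang uniqueness theorem for Seifert fibrations (\cite[Theorem~1.5.2]{AFW}) load-bearing, so you must cite it and check that the excluded list there is exactly the manifolds covered by $S^3$, $S^2\times\RR$ or $T^3$ --- which it is for closed orientable manifolds, so your hypothesis is precisely what you need.

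Two points require more care than you give them. First, Proposition~\ref{proposition_euccase} as stated only produces one abelian and one non-abelian subgroup of equal index; when the Nil-manifold has non-orientable base orbifold the non-abelian subgroup can be torsion-free (a Klein bottle group), in which case neither cover has exceptional fibers and your stated invariant fails. The fix is to distinguish the covers by the homeomorphism type of the base orbifold itself (torus versus Klein bottle), which uniqueness of the fibration still permits, or to first pass to the orientation double cover of $O$ in this case too. Second, in the bundle case the existence of a cover realizing a prescribed pair $(d,r)$ with $r\mid de$ is asserted but not constructed; one should say that the pullback along a degree-$d$ base cover has $H_1\cong\ZZ^{2\hat g}\oplus\ZZ/(de)$ with the fiber generating the torsion summand (or a $\ZZ$ summand if $e=0$), whence a surjection onto $\ZZ/r$ sending the fiber to a generator exists exactly when $r\mid de$, yielding the required fiberwise cyclic cover. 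With these repairs the argument goes through; the trade-off relative to the paper is that you rely on the fibration-uniqueness classification where the paper uses only first homology.
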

Note that along with Propositions~\ref{prop:euclidean} and~\ref{prp_spherical}, this shows that closed Seifert fibered $3$-manifolds are not exceptional.

Our proof is based on the following proposition, which can for instance be found in \cite[p.\ 52 (C.10)]{AFW}:
\begin{proposition} Let $M$ be a closed Seifert fibered $3$-manifold. There exists a finite cover $\widehat{M}\to M$ so that $\widehat{M}$ is an $\Sphere^1$-bundle over a closed orientable surface.
\end{proposition}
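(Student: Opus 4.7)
The plan is to exploit the given Seifert fibration $p : M \to B$, where $B$ is the base $2$-orbifold, and to systematically pass to a finite cover that simultaneously kills all orbifold singularities of $B$ and any non-orientability. The target condition -- namely that the total space be an $S^1$-bundle over a closed orientable surface -- is equivalent to demanding a Seifert fibration with no exceptional fibers and with orientable base.

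First I would treat the generic case where $B$ is a \emph{good} $2$-orbifold, that is, where $B$ admits a finite orbifold cover $\Sigma \to B$ by a genuine surface. This is automatic for all spherical, Euclidean, and hyperbolic $2$-orbifolds: the orbifold fundamental group $\pi_1^{\mathrm{orb}}(B)$ is residually finite (by Selberg's lemma in the Euclidean/hyperbolic cases, trivially in the spherical case), so a torsion-free finite-index subgroup exists, and we may further pass to the orientation double cover to arrange that $\Sigma$ is a \emph{closed orientable} surface. Then form the pullback $\widehat{M} := M \times_{B} \Sigma$. The projection $\widehat{M} \to M$ is a finite cover of the same degree as $\Sigma \to B$, and the pulled-back Seifert fibration $\widehat{M} \to \Sigma$ has no exceptional fibers: locally near a cone point of order $m$ in $B$, the Seifert fibration is modeled on the quotient of $S^{1}\times D^{2}$ by a standard $\ZZ/m$-action, and the manifold cover $\Sigma \to B$ unwinds this quotient in the base, hence unwinds the corresponding exceptional fiber upstairs. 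A Seifert fibration over a closed orientable surface with no exceptional fibers is exactly an $S^{1}$-bundle, which concludes this case.

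Second, I would handle the case where $B$ is a \emph{bad} $2$-orbifold. The only closed bad $2$-orbifolds are the teardrop $S^{2}(p)$ and the spindle $S^{2}(p,q)$ with $p\neq q$. In these cases it is classical (see Scott, \emph{The geometries of $3$-manifolds}) that $M$ must be either $S^{3}$ or a lens space, so in particular $M$ is finitely covered by $S^{3}$; and $S^{3}$ is already an $S^{1}$-bundle over the closed orientable surface $S^{2}$ via the Hopf fibration. Composing this cover with the tower $M \leftarrow \widehat{M}$ gives the required $S^{1}$-bundle cover.

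The main obstacle is the good-orbifold step: one must know that $\pi_1^{\mathrm{orb}}(B)$ has a torsion-free, orientation-preserving, finite-index subgroup, and that forming the fiber product commutes with the Seifert structure in the expected way (i.e.\ that pulling back does kill exceptional fibers on the nose, rather than merely reducing their multiplicities). Both points are standard but require the local model argument sketched above. Once these are in hand, the remaining verifications -- finiteness of the degree, closedness of $\widehat{M}$, and identification of ``no exceptional fibers'' with ``honest circle bundle'' -- are routine.
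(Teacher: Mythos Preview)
The paper does not supply its own proof of this proposition: it is quoted as a known fact with a pointer to \cite[p.~52 (C.10)]{AFW}. So there is no in-paper argument to compare against.

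Your outline is the standard one and is correct. A couple of small points worth tightening. First, the fiber-product description $\widehat{M}=M\times_B\Sigma$ is a little delicate when $\pi_1(M)$ is finite: for instance, if $M=S^3$ fibered over $S^2(2,3,5)$ and $\Sigma=S^2$ is the degree-$60$ manifold cover, the fiber product is $60$ disjoint copies of $S^3$, so you should say explicitly that you take a connected component (or, cleaner, treat all spherical $M$ uniformly via the $S^3$ cover and the Hopf fibration, rather than only the bad-orbifold ones). Second, when $B$ is a good spherical orbifold the torsion-free finite-index subgroup of $\pi_1^{\mathrm{orb}}(B)$ is the trivial group, so $\Sigma=S^2$; your Selberg clause does not literally cover this, though the conclusion is of course immediate. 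In the infinite-$\pi_1$ case your argument is cleanest rephrased via the short exact sequence $1\to\ZZ\to\pi_1(M)\to\pi_1^{\mathrm{orb}}(B)\to 1$: pulling back a torsion-free, orientation-preserving finite-index subgroup of $\pi_1^{\mathrm{orb}}(B)$ gives the desired cover directly, and this sidesteps the local-model verification that the fiber product is a manifold with the exceptional fibers fully unwound.
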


In order to find distinct finite covers of a Seifert fibered manifold, it thus suffices to find distinct finite covers of $\Sphere^1$-bundles over closed  orientable surfaces. Intuitively, the way we produce these is to take covers both in the $\Sphere^1$-direction and the surface direction.  To make this idea precise, we will use the Euler number $e(\pi)$ of our circle bundles to distinguish covers (see \cite[p. 427, 436]{Sco} for a definition). In particular, we will use the following property of Euler numbers (see for instance \cite[Lemma 3.5]{Sco}). 

\begin{lemma}\label{lem_eulernumber} Let $d\in \NN$ and
\[\Sphere^1\to M \stackrel{\pi}{\to} \Sigma\]
be an $\Sphere^1$-bundle over a closed oriented surface $\Sigma$, such that $M$ is orientable. Moreover, let $\widehat{M}$ be a degree $d$ finite cover of $M$, so that $\widehat{M}$ is the total space of the following bundle
\[\Sphere^1\to M \stackrel{\widehat{\pi}}{\to} \Sigma.\]
Suppose the induced circle and surface covers $\Sphere^1\to\Sphere^1$ and $\widehat{\Sigma}\rightarrow \Sigma$ have degrees $m$ and $\ell$ respectively, then $\ell m=d$ and
\[e(\widehat{\pi}) = \frac{\ell}{m} e(\pi). \]
\end{lemma}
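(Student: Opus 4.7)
The plan is to first establish the degree relation $\ell m = d$ by a direct fibre-counting argument, and then to deduce the Euler number formula by factoring the covering $\widehat{M} \to M$ through the pullback bundle $\widetilde{M} := \widehat{\Sigma} \times_\Sigma M$, thereby reducing to two clean cases: a pure base cover and a pure fibre cover.

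For the degree relation, let $q : \widehat{\Sigma} \to \Sigma$ be the induced base cover and fix a point $p \in \Sigma$. The fibre $q^{-1}(p)$ consists of $\ell$ points, and over each such point the corresponding fibre of $\widehat{\pi}$ is a circle mapping $m$-to-$1$ to $\pi^{-1}(p)$. A generic point of $\pi^{-1}(p)$ therefore has exactly $\ell m$ preimages in $\widehat{M}$, so $d = \ell m$.

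For the Euler number formula, observe that $\widetilde{\pi} : \widetilde{M} \to \widehat{\Sigma}$ is an $S^1$-bundle sitting in a pullback square over $\pi$, and that by the universal property of the pullback the covering $\widehat{M} \to M$ factors uniquely as
\[
\widehat{M} \xrightarrow{\;\Phi\;} \widetilde{M} \longrightarrow M,
\]
where $\Phi$ covers the identity on $\widehat{\Sigma}$ and restricts on each fibre to the $m$-fold cover $S^1 \to S^1$. The proof now splits into two independent computations. First, since $\widetilde{\pi}$ is the pullback of $\pi$ along $q$, naturality of the Euler class gives $e(\widetilde{\pi}) = q^* e(\pi)$ in $H^2(\widehat{\Sigma};\mathbb{Z})$; combining this with $q_*[\widehat{\Sigma}] = \ell \cdot [\Sigma]$ yields $e(\widetilde{\pi}) = \ell \cdot e(\pi)$. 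Second, choose local trivializations $\psi_\alpha$ of $\widehat{\pi}$ on an open cover $\{U_\alpha\}$ of $\widehat{\Sigma}$ and push them forward through $\Phi$ to local trivializations of $\widetilde{\pi}$; a direct computation then shows that the transition cocycles satisfy $\widetilde{g}_{\alpha\beta} = \widehat{g}_{\alpha\beta}^m$, and passing through the Bockstein $\check{H}^1(-,S^1) \to H^2(-,\mathbb{Z})$ gives $e(\widetilde{\pi}) = m \cdot e(\widehat{\pi})$. Combining the two relations yields $e(\widehat{\pi}) = \ell \cdot e(\pi)/m$, as desired.

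The main technical point is the transition-cocycle computation in the second step: one needs to verify that the push-forward trivialization of $\widetilde{\pi}$ is well defined despite $\Phi$ not being injective, which reduces to the observation that two preimages of a given point of $\widetilde{M}$ under $\Phi$ differ by multiplication by an $m$-th root of unity in the $S^1$-fibre and that this ambiguity is killed upon taking $m$-th powers. With a small amount of care about orientations to avoid a sign error, the formula follows immediately. A more geometric proof can also be given by selecting a section of $\pi$ over the complement of a small disk $D \subset \Sigma$ and counting how many times each of the $\ell$ lifts winds around the corresponding fibre of $\widehat{\pi}$ above the preimage disks, but the cohomological argument outlined above is cleaner.
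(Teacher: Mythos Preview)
The paper does not prove this lemma itself; it simply cites \cite[Lemma~3.5]{Sco} and moves on. Your argument is correct and is essentially the standard one: factor the cover through the pullback bundle $\widetilde{M}=\widehat{\Sigma}\times_\Sigma M$, use naturality of the Euler class for the base-cover step to get $e(\widetilde{\pi})=\ell\cdot e(\pi)$, and then handle the pure fibre cover $\Phi:\widehat{M}\to\widetilde{M}$ to get $e(\widetilde{\pi})=m\cdot e(\widehat{\pi})$.

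The only place to tighten is your assertion that two $\Phi$-preimages of a point differ by an $m$-th root of unity in the $S^1$-fibre. This is not automatic from $\Phi$ being a fibrewise $m$-fold cover; it requires the principal $S^1$-structures on $\widehat{M}$ and $\widetilde{M}$ to be compatible in the sense that $\Phi$ is equivariant for $z\mapsto z^m$. You can always arrange this---for instance by \emph{defining} the $S^1$-action on $\widetilde{M}=\widehat{M}/(\ZZ/m)$ via the quotient, or equivalently by noting that oriented circle bundles over a surface are classified up to isomorphism by their Euler number, so you may replace $\Phi$ by its fibre-homotopy class---but it deserves a sentence. Once that is said, your cocycle relation $\widetilde{g}_{\alpha\beta}=\widehat{g}_{\alpha\beta}^{\,m}$ and the Bockstein argument are clean and the formula follows.
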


\noindent We now prove the following proposition. This, together with the fact that the only orientable $S^1$-bundle over $S^2$ is  $S^1\times S^2$, will complete the proof of Proposition~\ref{prop:seifert-fibered-for-leitfaden}.

\begin{proposition}\label{prp_seif}Let $\Sigma$ be a closed orientable surface that is not a sphere and let $M$ be an $\Sphere^1$-bundle over $\Sigma$. Then $M$ is exceptional if and only if $M$ is the trivial $\Sphere^1$-bundle over the $2$-torus.
\end{proposition}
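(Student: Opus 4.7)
The plan is first to dispose of the exceptional case: when $M$ is the trivial bundle over $T^2$, then $M\cong T^3$, which is exceptional by Proposition~\ref{prop:specialones}. For the converse, I aim to show that in every remaining case---either $\Sigma=T^2$ with Euler number $e:=e(\pi)\neq 0$, or $\Sigma=\Sigma_g$ with $g\geq 2$---the manifold $M$ admits two non-homeomorphic connected covers of degree $N^2$ for each $N\geq 2$, and hence is not exceptional.

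The construction uses Lemma~\ref{lem_eulernumber} to trade base-degree against fiber-degree while keeping the product fixed. I will let $\widehat{M}_2$ be the pullback of $M$ under a connected $N^2$-fold base cover, which is an $S^1$-bundle over a surface of genus $1+N^2(g-1)$ with Euler number $N^2 e$. For the second cover I will take an $N$-fold fiber cover $\widehat{M}_1$ of the pullback of $M$ under a connected $N$-fold base cover; by Lemma~\ref{lem_eulernumber} this is an $S^1$-bundle over $\Sigma_{1+N(g-1)}$ with Euler number $Ne/N=e$. The fiber cover in the second step is realised as the kernel of the homomorphism from the fundamental group of the intermediate bundle to $\mathbb{Z}/N$ sending all surface generators to $0$ and the fiber generator $t$ to $1$; this map is well-defined precisely because $N\mid Ne$. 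Connectedness of the base covers follows from residual finiteness of surface groups (for $g=1$ every finite cover of $T^2$ is again a torus, and for $g\geq 2$ every index is realised by a connected cover).

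To distinguish the two covers I will abelianise the standard presentation
\[
\pi_1=\langle a_1,b_1,\ldots,a_h,b_h,t \mid [a_i,t],\,[b_i,t],\,\textstyle\prod_{i=1}^h[a_i,b_i]=t^{e'}\rangle
\]
of an $S^1$-bundle with Euler number $e'$ over $\Sigma_h$, which yields $H_1\cong\mathbb{Z}^{2h}\oplus\mathbb{Z}/|e'|$ when $e'\neq 0$ and $\mathbb{Z}^{2h+1}$ when $e'=0$. If $g\geq 2$, the base genera $1+N(g-1)$ and $1+N^2(g-1)$ differ, so the free ranks of $H_1(\widehat{M}_1)$ and $H_1(\widehat{M}_2)$ differ. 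If $g=1$ and $e\neq 0$, then both base surfaces are tori, but the torsion subgroups $\mathbb{Z}/|e|$ and $\mathbb{Z}/|N^2 e|$ differ for $N\geq 2$. In either case, $\widehat{M}_1\not\cong\widehat{M}_2$.

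The only real technical point is ensuring that the fiber cover in step~(i) actually exists, which forces the divisibility $N\mid Ne$ used above and is automatic. Note in particular that the proof does not require any uniqueness statement for the Seifert fibration, because $H_1$ is a topological invariant of the covers and already separates the two candidates.
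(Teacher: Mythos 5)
Your proof is correct and follows essentially the same strategy as the paper's: produce two covers of equal degree that distribute that degree differently between the base and the fiber, and distinguish them by reading off the base genus and $|e|$ from $H_1$ via Lemma~\ref{lem_eulernumber}. The one genuine (and minor) difference is that where the paper invokes residual finiteness to find a finite quotient in which the fiber class survives, you first pass to the intermediate $N$-fold base pullback so that the Euler number becomes $Ne$ and the fiber-unwrapping $\ZZ/N$-quotient can be written down explicitly; this also lets you treat the trivial and non-trivial bundles uniformly rather than in two separate cases.
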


\begin{proof}
Our first claim is that the map $\pi_1(\Sphere^1) \to \pi_1(M)$ is injective. This follows from the long exact sequence in homotopy of the fibration
\[\ldots \to \pi_2(\Sigma) \to \pi_1(\Sphere^1) \to \pi_1(M) \to \pi_1(\Sigma)\to \ldots.\]
Our assumption on the genus of $\Sigma$ implies that $\pi_2(\Sigma) = \{e\}$ and hence that the map $\pi_1(\Sphere^1) \to \pi_1(M)$ is injective.

First we assume the bundle is non-trivial. Let $t\in \pi_1(\Sphere^1)$ denote a generator. Residual finiteness of $3$-manifold groups (see Theorem~\ref{theorem_resfin}) implies that we can find a finite group $G$ and a surjection $\varphi:\pi_1(M) \to G$ so that
\[ \varphi(t) \neq e.\]
Let us denote the induced degree $d=\card{G}$ cover by $\widehat{M}\to M$. Since $t$ is mapped to a non-trivial element, the induced circle cover is non-trivial. Then Lemma \ref{lem_eulernumber} tells us that the induced $\Sphere^1$-bundle $\Sphere^1\to \widehat{M} \stackrel{\widehat{\pi}}{\to} \widehat{\Sigma}$ satisfies
\[ \abs{e(\widehat{\pi})} < d\cdot \abs{e(\pi)}. \]

To build the second cover, take any degree $d$ surface cover $\widetilde{\Sigma}\to\Sigma$ (these exist for any $d$) and pull back the $\Sphere^1$-bundle. This gives rise to a degree $d$ cover $\widetilde{M}\to M$, that has the structure of a $\Sphere^1$-bundle $\Sphere^1\to \widetilde{M} \stackrel{\widetilde{\pi}}{\to} \widetilde{\Sigma}$. Applying Lemma \ref{lem_eulernumber} again, we obtain
\[ \abs{e(\widetilde{\pi})} = d\cdot \abs{e(\pi)}. \]

Since our bundle is non-trivial, we have $e(\pi)\neq 0$. It can now for instance be extracted from the Gysin sequence that if $\Sphere^1\to N\to \Sigma$ is a circle bundle with euler number $e\neq 0$, then
\[H_1(N,\ZZ) \cong \ZZ^{2g} \oplus \ZZ/e
\] 
where $g$ denotes the genus of $\Sigma$. In particular this implies that the absolute value of $e$ is an invariant of the total space and not just the circle bundle. That in turn means that $\widehat{M}$ and $\widetilde{M}$ are not homeomorphic.

Finally, we have to deal with the trivial bundle, that is, $M \cong \Sigma\times \Sphere^1$. In this case $e(\pi)=0$
and
\[H_1(M;\ZZ) \cong  \ZZ^{2g+1}. \] 
Now surface and circle covers 
\[\widehat{\Sigma}\to \Sigma \;\;\text{and}\;\;\Sphere^1\to\Sphere^1\]
of the same degree induce two covers 
\[\widehat{M} \cong \widehat{\Sigma}\times \Sphere^1\to M \;\;\text{and}\;\;\widetilde{M} \cong \Sigma\times \Sphere^1\to M\]
of the same degree. If $g >1$, then $\widehat{\Sigma}$ has strictly greater genus than $\Sigma$, so $\widehat{M}$ is not homeomorphic to $\widetilde{M}$.
\end{proof}

\section{Sol manifolds}

\noindent In this section, we prove the following proposition. 

\begin{proposition}\label{prp_sol} 
Sol $3$-manifolds are not exceptional.
\end{proposition}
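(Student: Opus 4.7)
The plan is to use Lemma~\ref{lem:nonexceptional-cover} to reduce to the case of an Anosov torus bundle and then distinguish two index-$4$ subgroups via their first homology. Every closed orientable Sol $3$-manifold admits a finite cover which is a torus bundle $M_A$ over $S^1$ whose monodromy $A \in \mathrm{SL}_2(\Z)$ is Anosov (that is, $|\tr{A}| \geq 3$), so one may assume $\pi_1(M) = \Gamma_A := \Z^2 \rtimes_A \Z$.

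The key observation is that for any $A$-invariant sublattice $L \subset \Z^2$ of index $k$ and any integer $n \geq 1$, the subset $L \rtimes n\Z$ is a subgroup of $\Gamma_A$ of index $kn$, abstractly isomorphic to $\Z^2 \rtimes_B \Z$, where $B$ is the matrix of $A^n|_L$ with respect to some $\Z$-basis of $L$. A direct computation from the defining presentation gives
\[
\left(\Z^2 \rtimes_B \Z\right)^{ab} \;\cong\; \Z \oplus \coker(I - B),
\]
and since $\det B = 1$ the torsion subgroup has order $|\det(I - B)| = |2 - \tr{B}|$. Applying this to the two index-$4$ subgroups
\[
H_1 := 2\Z^2 \rtimes_A \Z \quad \text{and} \quad H_2 := \Z^2 \rtimes_{A^4} \Z,
\]
whose monodromies are $A$ (in the basis $(2e_1, 2e_2)$ of $2\Z^2$) and $A^4$, respectively, yields torsion subgroups of orders $|2 - \tr{A}|$ and $|2 - \tr{A^4}|$. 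The Cayley--Hamilton trace recursion $T_{n+1} = \tr{A}\cdot T_n - T_{n-1}$ with $T_0 = 2$, $T_1 = \tr{A}$ gives $|2 - \tr{A^4}| = \tr{A}^2 \cdot (\tr{A}^2 - 4) \geq 45$ whenever $|\tr{A}| \geq 3$, which strictly exceeds $|2 - \tr{A}| \leq |\tr{A}| + 2$. Hence $H_1^{ab} \not\cong H_2^{ab}$, so $H_1 \not\cong H_2$ and $\Gamma_A$ is not exceptional.

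The main obstacle---really the only nontrivial external input---is the reduction step, which relies on the standard classification of closed orientable Sol $3$-manifolds up to finite cover, namely that each one is virtually an Anosov torus bundle (see, e.g., Scott's survey on $3$-manifold geometries). After that the argument is elementary $\Z$-linear algebra, modulo the identification of the abelianization of $\Z^2 \rtimes_B \Z$ with $\Z \oplus \coker(I - B)$.
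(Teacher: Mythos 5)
Your argument is correct, and it takes a genuinely different (more elementary) route to the punchline than the paper does. The reduction is identical: both proofs pass to an Anosov torus bundle via Lemma~\ref{lem:nonexceptional-cover}, and in fact the two index-$d$ covers are essentially the same in both proofs --- one obtained by covering in the fiber direction (characteristic cover of $T^2$, leaving the monodromy unchanged) and one by covering in the base direction (cyclic cover of $S^1$, raising the monodromy to a power). The difference lies in how the covers are distinguished. The paper uses the spectral radius $|\lambda_\varphi|$ of the monodromy, and must first justify that this is a topological invariant of the total space: since $b_1(M)=1$, the only primitive fibered classes are $\pm[\varphi]$, so the stretch factor is determined by the homeomorphism type. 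You instead distinguish the covers at the level of fundamental groups, via the torsion of $H_1 \cong \Z \oplus \coker(I-B)$, whose order $|2-\mathrm{tr}(B)|$ you compare for $B=A$ and $B=A^4$ using the Cayley--Hamilton trace recursion. This buys you a completely self-contained, purely algebraic argument that avoids the fibered-class discussion entirely (at the cost of a small explicit computation), and it shows directly that the group $\Z^2\rtimes_A\Z$ is not exceptional as a group, not merely that the manifold is not exceptional. Your numerics check out: for $|t|=|\mathrm{tr}(A)|\ge 3$ one has $|2-\mathrm{tr}(A^4)| = t^2(t^2-4)\ge 45 > |t|+2 \ge |2-\mathrm{tr}(A)|$, so the two index-$4$ subgroups have non-isomorphic abelianizations.
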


\begin{proof} Every orientable Sol manifold $M$ is finitely covered by a $2$-torus bundle over $\Sphere^1$ with Anosov monodromy $\varphi\in \mathrm{MCG}(\Torus^2) \cong \mathrm{SL}_2(\ZZ)$ (see for instance \cite[Theorem 1.8.2]{AFW}). Recall this the monodromy is called Anosov if the top-eigenvalue $\lambda_\varphi$ of $\varphi$ as an $\mathrm{SL}_2(\ZZ)$-matrix satisfies $\abs{\lambda_\varphi} > 1$. By Lemma \ref{lem:nonexceptional-cover}, we may assume $M$ is a $2$-torus bundle over $S^1$ with Anosov monodromy $\varphi$. Let $\lambda_\varphi$ be the leading eigenvalue. 

First we remind the reader of the well known fact that, as opposed to the case of hyperbolic mapping tori, the modulus of the eigenvalue $\lambda_\varphi$ is a topological invariant. Indeed, we have $b_1(M) = 1$. Each fibration $\pi:M\to S^1$ with connected fibers and monodromy $\psi$ induces a primitive non-torsion cohomology class $[\psi]\in H^1(M;\ZZ)$ and conversely each such cohomology class determines the fibration up to isotopy. The latter fact implies that the top eigenvalue $\lambda_\psi$ of the monodromy $\psi$ depends only on $[\psi]$. The former observation implies that the only fibered classes are $[\varphi]$ and $-[\varphi]$. Since 
\[\abs{\lambda_{[-\varphi]}} = \abs{\lambda_{[\varphi]}},\]
$\abs{\lambda_\varphi}$ is indeed a topological invariant

In order to build two non-homeomorphic covers, we proceed as follows. First let 
\[\Torus^2\to\Torus^2\]
be a finite non-trivial characteristic cover. This means that $\varphi$ lifts to a map 
\[\widehat{\varphi}: \Torus^2\to \Torus^2\]
such that $\lambda_{\varphi} = \lambda_{\widehat{\varphi}}$. We obtain a cover
\[\widehat{M} \to M,\]
where 
\[\widehat{M} \cong \Torus^2 \times [0,1] / (x,0)\sim (\widehat{\varphi}(x),1).\]

Since $b_1(M) =1$, we can also take a finite cyclic cover 
\[ \widetilde{M}\to M\]
of the same degree, say $d\neq 1$. The monodromy $\widetilde{\varphi}$ of this cover satisfies
\[\abs{\lambda_{\widetilde{\varphi}}} = \abs{\lambda_\varphi}^d,\]
thus $\widehat{M}$ and $\widetilde{M}$ are not homeomorphic.
\end{proof}

\section{Manifolds with non-trivial JSJ decompositions and non-trivial boundary}

\noindent 
In this section, we prove the following proposition.

\begin{proposition}\label{prp_JSJ}
Let $M$ be an orientable, irreducible $3$-manifold with empty or toroidal boundary such that either $M$ has a non-trivial JSJ decomposition or $\partial M$ is non-empty. Assume that $M$ is not homeomorphic to $S^1\times D^2$, $T^2\times I$ or the twisted $I$-bundle over the Klein bottle, and that $M$ is not a Sol manifold. Then $M$ is not exceptional. 
\end{proposition}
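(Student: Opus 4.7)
The strategy is to exploit the JSJ decomposition of $M$ as a topological invariant: two finite covers of $M$ with non-homeomorphic JSJ graphs, or with non-homeomorphic corresponding JSJ pieces, cannot themselves be homeomorphic. The plan is to produce, for infinitely many degrees $d$, two non-homeomorphic covers of degree $d$ whose JSJ decompositions differ in a detectable way.

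First, I would pass to a well-behaved finite cover of $M$ using Lemma \ref{lem:nonexceptional-cover} together with residual finiteness (Theorem \ref{theorem_resfin}). The goal is to arrange that every Seifert-fibered JSJ piece is an honest $S^1$-bundle over an orientable surface with boundary, every hyperbolic JSJ piece has large first Betti number (Theorem \ref{theorem_virtbetti}), and the JSJ graph has at least two vertices in the closed case. These reductions keep us within the hypotheses of the proposition and simplify the subsequent analysis.

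Next, I would split into two cases according to whether $M$ has a hyperbolic JSJ piece. If $M$ contains such a piece $N$, I would produce, using Proposition \ref{proposition_hypcase}, two non-homeomorphic covers of $N$ of the same degree $d$ whose restrictions to every boundary torus are identical. Such covers extend to covers of $M$ by placing the same (e.g.\ trivial) cover on every other JSJ piece; the resulting two covers of $M$ have equal degree but contain non-homeomorphic lifts of $N$ as a JSJ piece, and are therefore non-homeomorphic. Making the peripheral data match requires first passing to a further cover so that each boundary subgroup of $\pi_1(N)$ sits deep in some central series of $\pi_1(N)$, after which one can vary the cover of $N$ within the kernel of the peripheral quotient. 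If $M$ has no hyperbolic piece, then $M$ is either Seifert fibered with non-empty boundary (excluding the listed simple cases) or a non-Sol graph manifold. In the Seifert fibered subcase, the base $2$-orbifold has enough topology that Lemma \ref{lem:exceptional-surfaces} and Proposition \ref{prp_seif}-type Euler number arguments produce distinguishable covers. In the graph manifold subcase, the non-Sol hypothesis guarantees two adjacent Seifert pieces whose Seifert fibers project to linearly independent directions on their common JSJ torus; a cyclic cover unwrapping the fiber of one piece has different Seifert invariants (or a different number of lifted JSJ tori) from a cover of the same degree unwrapping the fiber of the adjacent piece.

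The principal obstacle will be the combinatorial bookkeeping: ensuring the two constructed covers have exactly the same degree without accidentally forcing them to be homeomorphic. This requires carefully balancing the degree contributions across the remaining JSJ pieces while verifying that the distinguishing invariants (Euler numbers, homeomorphism types of base orbifolds, or numbers of JSJ tori) of the two covers remain genuinely different after the balancing. The exclusion of Sol, $S^1 \times D^2$, $T^2 \times I$, and the twisted $I$-bundle over the Klein bottle is precisely what rules out the rigidity that would otherwise force all such invariants to coincide.
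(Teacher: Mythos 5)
Your overall strategy (pass to a good cover, then distinguish two same-degree covers by their JSJ data) is the right one, but the proof has a genuine gap at its central step: extending a cover of a single JSJ piece $N$ to a cover of all of $M$. Your gluing argument only works if the chosen cover of $N$ restricts to the \emph{trivial} (product) cover on every boundary torus, i.e.\ if every conjugate of every peripheral subgroup is contained in the corresponding finite-index subgroup of $\pi_1(N)$. Proposition~\ref{proposition_hypcase} gives no such control: the subgroups produced there (congruence-type subgroups with large systole, or kernels of maps to $\ZZ$) emphatically do not contain the peripheral $\ZZ^2$'s, and your proposed fix --- arranging that the boundary subgroups ``sit deep in some central series'' and then varying within the kernel of a peripheral quotient --- is not justified and is not a standard fact; nilpotent quotients of cusped hyperbolic manifold groups are far too small for this. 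Producing two non-homeomorphic, peripherally trivial covers of $N$ of the same degree is a substantive new claim that you neither prove nor reduce to anything cited. The paper sidesteps this entirely via Lemma~\ref{lem:extend-homomorphisms} (the Wilton--Zalesskii efficiency theorem), which extends an arbitrary finite quotient of $\pi_1(N)$ to a finite quotient of $\pi_1(M)$ inducing a further cover of the given cover of $N$; some such input is unavoidable, and without it your construction does not produce covering spaces of $M$.

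The second issue is that your case analysis multiplies the difficulties rather than resolving them. The paper needs no hyperbolic/Seifert dichotomy: it takes \emph{any} JSJ piece $N'$ (which has boundary by hypothesis), uses the virtually special machinery to find a finite cover with $\coker\bigl(H_1(\partial \overline{N'})\to H_1(\overline{N'})\bigr)$ of positive rank, promotes this by excision to an epimorphism $\pi_1(M^*)\to\ZZ$ vanishing on all JSJ and boundary tori, and then compares the resulting cyclic cover (in which every characteristic torus lifts to $d$ copies) with a cover of the same degree, obtained from residual finiteness, in which some characteristic torus does not. The invariant is just the count of characteristic tori, which works uniformly. By contrast, your graph-manifold subcase is not sound as stated: ``unwrapping the fiber of one Seifert piece'' does not obviously come from a homomorphism of $\pi_1(M)$ to a finite cyclic group (the fiber class of one piece need not survive in $H_1(M)$), the non-parallelism of adjacent fibers on a JSJ torus is part of JSJ minimality rather than a consequence of excluding Sol, and $M$ may have a single Seifert piece glued to itself, so ``two adjacent pieces'' need not exist. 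You would also need to verify (as the paper does via its property $(\dagger)$, using~\cite[Propositions 1.9.2 and 1.9.3]{AFW}) that the preimage of the JSJ decomposition in your covers is again the JSJ decomposition, before you can read off any JSJ invariant of the covers.
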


\noindent We start out with the following useful lemma.

\begin{lemma}\label{lem:extend-homomorphisms}
Let $M$ be an orientable, irreducible $3$-manifold with empty or toroidal boundary and let $N$ be a JSJ component of $M$. Then for any finite group $G$ and any surjective homomorphism $f\colon\pi_1(N)\twoheadrightarrow G$, there exist finite groups $K$ and $H$ and homomorphisms $g, g_1, g_2, g_3$ $($of the type shown in the diagram$)$, such that the following diagram commutes.
\[
\xymatrix@C1.4cm@R1.3cm{\pi_1(N)\ar@{->>}[d]^f\ar[r]^{i_*}\ar@{->>}[dr]^{g_3} &\pi_1(M)\ar@{->>}[dr]^g\\
G
&K\ar@{->>}[l]_-{g_1} \ar@{^(->}[r]^{g_2}& H.}
\]
\end{lemma}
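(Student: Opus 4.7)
The plan is to reduce the problem to the existence of a finite-index normal subgroup of $\pi_1(M)$ whose intersection with $\pi_1(N)$ is contained in $\ker f$, and then invoke a profinite separability property of the JSJ decomposition.

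First I would observe that given any surjection $g \colon \pi_1(M) \twoheadrightarrow H$ onto a finite group, one can set $K := g(i_*(\pi_1(N))) \subseteq H$, take $g_2 \colon K \hookrightarrow H$ to be the inclusion and $g_3 := g \circ i_* \colon \pi_1(N) \twoheadrightarrow K$ to be the corestriction. A surjection $g_1 \colon K \twoheadrightarrow G$ with $g_1 \circ g_3 = f$ then exists if and only if $\ker(g \circ i_*) \subseteq \ker f$, in which case $g_1$ is forced by the rule $g_1(g_3(x)) := f(x)$ and is automatically surjective. Hence the lemma is equivalent to producing a finite-index normal subgroup $N' \triangleleft \pi_1(M)$ with $i_*^{-1}(N') \subseteq \ker f$.

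Since the JSJ tori of an orientable, irreducible $3$-manifold with empty or toroidal boundary are $\pi_1$-injective, the map $i_* \colon \pi_1(N) \to \pi_1(M)$ is injective, and after identifying $\pi_1(N)$ with its image the desired condition becomes $N' \cap \pi_1(N) \subseteq \ker f$. Equivalently, the finite-index subgroup $\ker f \leq \pi_1(N)$ must be open in the topology on $\pi_1(N)$ induced from the profinite topology on $\pi_1(M)$. This is precisely the statement that the profinite topology on $\pi_1(M)$ induces the full profinite topology on $\pi_1(N)$, which is known as the \emph{profinite efficiency} of the JSJ decomposition. For compact, orientable, irreducible $3$-manifolds with empty or toroidal boundary this was established by Wilton and Zalesskii, building on earlier separability results of Hamilton and ultimately on the LERF property of hyperbolic JSJ pieces (Agol--Wise) and of Seifert fibered pieces (Scott). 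Granting this, $\ker f$ is open in the induced topology, so there is a finite-index subgroup $H_0 \leq \pi_1(M)$ with $H_0 \cap \pi_1(N) \subseteq \ker f$; replacing $H_0$ by its normal core yields the desired $N'$, and setting $H := \pi_1(M)/N'$ with $g$ the projection produces the required data.

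The only real obstacle is the appeal to the profinite efficiency of the JSJ decomposition; everything else is a formal unraveling of the diagram. One could instead try to avoid this machinery by working directly with the Bass--Serre decomposition of $\pi_1(M)$, extending $f$ across one JSJ torus at a time by choosing compatible finite quotients of the neighboring vertex groups and amalgamating. Such a hands-on approach is tempting, but the glueing step requires controlling finite quotients along $\mathbb{Z}^2$ edge subgroups sitting inside hyperbolic or Seifert fibered vertex groups, which in the end seems to need essentially the same separability input; I would therefore favor the direct profinite-topology argument above.
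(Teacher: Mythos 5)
Your argument is correct and is essentially the paper's own proof: the paper also reduces the lemma to the efficiency of the JSJ decomposition and cites Wilton--Zalesskii (Theorem A of their paper on profinite efficiency) for the closed case and \cite[(C.35)]{AFW} for the case of non-empty toroidal boundary. Your explicit unraveling of the diagram into the condition that $\ker f$ be open in the topology induced on $\pi_1(N)$ by the profinite topology of $\pi_1(M)$ is exactly the content of that citation.
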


Note in particular that the cover of $N$ induced by the map $g_3$ is a cover of the one induced by $f$. 

\begin{proof}
For closed manifolds this lemma is  an immediate consequence of \cite[Theorem~A]{WZ10}. As is explained in \cite[(C.35)]{AFW}, the statement also holds in the case that $M$ has non-empty toroidal boundary.
\end{proof}

\begin{proof}[Proof of Proposition~\ref{prp_JSJ}]
In this proof we use the following terminology. Given a 3-manifold $W$ with empty or toroidal boundary we refer to the union of the JSJ tori and the boundary tori of $W$ as the set of  \emph{characteristic tori} of $W$.
At the end of the upcoming proof we will have constructed  two index $d$ covering spaces $\widehat{M}$ and $\widetilde{M}$ of $M$ that we will distinguish by showing that they have unequal numbers of characteristic tori. 

We say that a $3$-manifold is \emph{tiny} if it is homeomorphic to $S^1\times D^2$, $T^2\times I$, or to the twisted $I$-bundle over the Klein bottle.
Throughout this proof we will use on several occasions the following preliminary remark: If $W$ is an orientable $3$-manifold that is 
not tiny, then it follows from the classification of 
 $3$-manifolds with virtually solvable fundamental group, see~\cite[Theorem~1.11.1]{AFW}, that no finite cover of $W$ is tiny.

By~\cite{Hem} (see also~\cite[C.10]{AFW}), we know that $M$ has a finite-sheeted cover $M'\rightarrow M$ such that each Seifert fibered JSJ component of $M'$ is an $S^1$-bundle over a compact orientable surface. 
Since $M$ is not a Sol manifold, neither is $M'$. By our preliminary remark, since $M$ is not tiny, neither is $M'$. These three latter facts, along with~\cite[Propositions 1.9.2 and 1.9.3]{AFW}, imply that this manifold $M'$ has the following useful property $(\dagger)$:
For any finite cover $\widehat{M'}\rightarrow M'$, the preimage of the JSJ decomposition of $M'$ is exactly the JSJ decomposition of $\widehat{M'}$. 

Let $N'$ be a JSJ component of $M'$, where possibly $N'=M'$. By hypothesis, $\partial N'$ is non-empty. There exists a finite-sheeted cover $\overline{N'}\rightarrow N'$ such that the rank of the cokernel of the map $H_1(\partial \overline{N'})\rightarrow H_1(\overline{N'})$ is at least one, by~\cite[C.15, C.17]{AFW}. (Here
we used that $N'$ is not homeomorphic to $T^2\times I$, this follows from the fact that  $M'$ is not tiny and from our hypothesis that $M$ is not a Sol-manifold and from \cite[Proposition~1.6.2(3), 1.8.1, 1.10.1]{AFW}.)

The finite-sheeted cover $\overline{N'}\rightarrow N'$ corresponds to a finite index subgroup of $\pi_1(N')$. Recall that any finite index subgroup of a group contains a finite index normal subgroup, called its normal core; let the (finite index, regular) cover corresponding to the latter normal subgroup of $\pi_1(N')$ be denoted $\widehat{N'}\rightarrow N'$. By construction, $\widehat{N'}\rightarrow N'$ corresponds to the kernel of a surjective map $\pi_1(N')\twoheadrightarrow G$ for some finite group $G$, and by Lemma~\ref{lem:extend-homomorphisms}, we obtain the following commutative diagram:
\[
\xymatrix@C1.4cm@R1.3cm{\pi_1(N')\ar@{->>}[d]^f\ar[r]^{i_*}\ar@{->>}[dr]^{g_3} &\pi_1(M')\ar@{->>}[dr]^g\\
G \cong \pi_1(N')/\pi_1(\widehat{N'})
&K\ar@{->>}[l]_-{g_1} \ar@{^(->}[r]^{g_2}& H.}
\]

Let $M^*$ be the cover $M^*\rightarrow M'$ corresponding to the kernel of $g$. From Lemma~\ref{lem:extend-homomorphisms}, it follows that the induced cover of $N'$ corresponding to $g_3$ is a finite-sheeted cover of $\widehat{N'}$; call it $N^*$. Since $M^*$ is a finite-sheeted cover of $M'$ it follows from $(\dagger)$ that  $N^*$ is a JSJ component of $M^*$. Since the cover $N^*\rightarrow \overline{N'}$ is finite-sheeted it follows from an elementary argument, see~\cite[A.12]{AFW}, that the rank of the cokernel of the map $H_1(\partial N^*)\rightarrow H_1(N^*)$ is also at least one. 
Since $M^*$ is a finite-sheeted cover of $M$ it follows from  Lemma~\ref{lem:nonexceptional-cover} that it suffices to show  that $M^*$ is not exceptional. 

We have the following commutative diagram, where the horizontal sequences form the long exact sequence in singular homology for the pairs $(N^*, \partial N^*)$ and $(M^*, M^*\setminus \Int(N^*))$ and the vertical arrows are induced by inclusion. 
\begin{equation*}
\begin{tikzcd}
 H_1(\partial N^*) \arrow{r}{i_*}\arrow{d} &H_1(N^*) \arrow{r}\arrow{d}{k_*} &H_1(N^*, \partial N^*)\arrow{d} &\\
 H_1(M^*\setminus \Int(N^*)) \arrow{r}{j_*} &H_1(M^*) \arrow{r} &H_1(M^*, M^*\setminus \Int(N^*)).
\end{tikzcd}
\end{equation*}
We obtain an induced commutative diagram
\begin{equation*}
\begin{tikzcd}
\coker(i_*) \arrow[hookrightarrow]{r}\arrow{d}{\overline{k_*}} &H_1(N^*, \partial N^*)\arrow{d}\\	
\coker(j*) \arrow[hookrightarrow]{r} &H_1(M^*, M^*\setminus \Int(N^*)).
\end{tikzcd}
\end{equation*}
By a standard excision argument, we see that $H_1(M^*, M^*\setminus \Int(N^*))\cong H_1(N^*, \partial N^*)$; in other words, the rightmost vertical map above is an isomorphism. We see that $\overline{k_*}$ is injective, and thus, the rank of $\coker(j_*)$ is bounded below by the rank of $\coker(i_*)$ which by hypothesis is at least one. Thus, there is an epimorphism $\coker(j_*)\twoheadrightarrow \Z$. We can then define, for any $m>1$, 
\[
\begin{tikzcd}
f_m\colon\pi_1(M^*) \arrow[two heads]{r}{\operatorname{Ab}} &H_1(M^*)\arrow[two heads]{r} & \coker(j_*)\arrow[two heads]{r} &\ZZ \arrow[two heads]{r} &\Z/m,
\end{tikzcd}
\]
where the second and last maps are the canonical projections. 
Note that each characteristic torus of $M^*$ is contained in $M^*\setminus \Int(N^*)$, and thus, by our construction, the image under inclusion of the fundamental group of any characteristic torus of $M^*$ lies in the kernel of $f_m$ for all $m$. 

We are finally ready to construct two non-homeomorphic covers of $M^*$ with the same index. As mentioned in the beginning of the proof, we will do so by constructing two finite covers of $M^*$ with the same degree, but different number of characteristic tori. At this point we would like to recall that it follows from $(\dagger)$ for any finite cover $W$ of $M^*$ the characteristic tori of $W$ are the preimages of the characteristic tori of $M^*$.

Now let $n$ be the total number of characteristic tori of $M^*$. 
Let $T$ be  a characteristic torus of $M^*$. Since $M^*$ is not tiny (and in particular, $M^*$ is not homeomorphic to $S^1\times D^2$, and thus, the boundary tori of $M^*$ are $\pi_1$-injective) we can view $\pi_1(T)$ as a subgroup of $\pi_1(M)$.  Since $\pi_1(M^*)$ is residually finite (Theorem~\ref{theorem_resfin}), there is some finite index normal subgroup $J\lhd \pi_1(M^*)$ such that $\pi_1(T)$ is not contained in $J$. Let $d>1$ be the index of $J$ in $\pi_1(M^*)$, and let $\widehat{p}\colon \widehat{M}\rightarrow M^*$ be the $d$-sheeted cover of $M^*$ corresponding to $J$. Since $\pi_1(T)$ is not contained in $J$ we see that the preimage of $T$ has strictly fewer than $d$ components. Since the preimage under $\widehat{p}$ of the characteristic tori for $M^*$ gives the characteristic tori for $\widehat{M}$, we see that the number of characteristic tori in $\widehat{M}$ is strictly less than $d\cdot n$. In order to build a second cover, let $\widetilde{p}\colon \widetilde{M}\rightarrow M^*$ denote the cover corresponding to the kernel of $f_d\colon \pi_1(M^*)\rightarrow \Z/d$ constructed above. By construction, the index of $\widetilde{p}$ is $d$ and the number of characteristic tori in $\widetilde{M}$ is $d\cdot n$, since the image under inclusion of characteristic torus of $M^*$ lies in the kernel of $f_d$. We see that $\widehat{M}$ and $\widetilde{M}$ are index $d$ covers of $M^*$ but have an unequal number of characteristic tori, and thus are non-homeomorphic. \end{proof}


\section{Non-prime and non-orientable $3$-manifolds}\label{sec:nonprime}

\subsection{Prime non-orientable $3$-manifolds}
Little further work is needed to completely characterize exceptional prime non-orientable $3$-manifolds with empty or toroidal boundary. Such a manifold is either the twisted $S^2$-bundle over $S^1$, or irreducible. We already saw that the former is exceptional (Proposition~\ref{prop:specialones}). For the latter case, note that if the orientable double cover of a non-orientable $3$-manifold $M$ is not exceptional, then $M$ is not exceptional. By our previous work, we only need to consider the irreducible non-orientable $3$-manifolds whose orientable double covers are $S^1\times S^2$, the $3$-torus $T^3$, $S^1\times D^2$, $S^1\times S^1\times [0,1]$. Here we have used that closed non-orientable $3$-manifolds have positive first Betti number. In particular, their fundamental groups are infinite~\cite[(E.3)]{AFW}.

Note that if a boundary torus for a manifold $M$ is compressible, $M$ has an $S^1 \times D^2$ summand. If $M$ is prime, $M$ must be homeomorphic to $S^1\times D^2$, which is of course not non-orientable. Thus, a prime non-orientable $3$-manifold with toroidal boundary, must have incompressible boundary. By~\cite[Lemma~2.1]{Swarup1973}, the fundamental group of the orientable double cover of an irreducible non-orientable $3$-manifold $M$ with incompressible boundary is free, which would be the case if the double cover is $S^1\times S^2$ or $S^1\times D^2$, if and only if $M$ is a homotopy $\RR\textup{P}^2\times S^1$. Such a manifold has first homology group $\Z\oplus \Z/2$. Thus, it has two double covers, only one of which is orientable. As a result, it is not exceptional. 

It remains to consider the  non-orientable $3$-manifolds with empty or toroidal boundary, whose orientable double cover is $T^3$ or $T^2\times I$. First suppose that $M$ is a $3$-manifold that is covered by $T^3$.
It follows from~\cite[Theorem~2.1]{Meeks-Scott} that $M$ is Euclidean.
This case is then addressed by Proposition~\ref{proposition_euccase}.  
Secondly, if $M$ is covered by $T^2\times I$, then a doubling argument shows, using the above, that $M$ is either homeomorphic to the Klein bottle times an interval or to the M\"obius band times $S^1$. In either case $M$ admits two $2$-fold coverings, only one of which is orientable.
Thus $M$ is not exceptional.
Thus, we have established the following proposition.

\begin{proposition}\label{prop:non-orientable-prime}
Let $M$ be a prime, non-orientable $3$-manifold with empty or toroidal boundary. Then $M$ is exceptional if and only if it is homeomorphic to $S^1\widetilde{\times} S^2$.
\end{proposition}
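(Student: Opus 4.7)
The plan is to show that $S^1\widetilde{\times} S^2$ is exceptional (already done in Proposition~\ref{prop:specialones}) and then, for any other prime non-orientable $M$ with empty or toroidal boundary, produce two non-homeomorphic covers of the same degree. The main tool will be Lemma~\ref{lem:nonexceptional-cover}: if the orientation double cover $\widetilde{M}\to M$ is not exceptional, then neither is $M$. So the bulk of the work consists of understanding the cases where $\widetilde{M}$ \emph{is} exceptional.

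First I would reduce to the irreducible case. Since $M$ is prime and non-orientable, either $M\cong S^1\widetilde{\times} S^2$ (which gives the exceptional manifold we are isolating) or $M$ is irreducible. In the irreducible case, let $\widetilde{M}\to M$ denote the orientation double cover. If $\widetilde{M}$ is not exceptional, we are done. Otherwise, consulting the list of exceptional orientable prime $3$-manifolds that we have already classified (everything in the Main Theorem that is orientable, prime, and irreducible), the candidates for $\widetilde{M}$ are: a spherical manifold, $S^1\times S^2$, $T^3$, $S^1\times D^2$, or $T^2\times I$. Spherical manifolds can be ruled out at once because $\widetilde{M}$ has infinite fundamental group: indeed, closed non-orientable $3$-manifolds have positive first Betti number by \cite[(E.3)]{AFW}, and so their double covers have infinite $\pi_1$; the same holds in the toroidal boundary case because $\partial M$ is non-empty. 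I would also argue at this stage that if $\partial M$ is toroidal and $M$ is prime, then either $M \cong S^1\times D^2$ (impossible since $M$ is non-orientable) or $\partial M$ is incompressible, which will be needed in the free fundamental group case below.

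Next I would handle the remaining four cases one by one. If $\widetilde{M} \cong S^1\times S^2$ or $S^1\times D^2$, then $\pi_1(\widetilde{M})$ is free, so by \cite[Lemma~2.1]{Swarup1973} applied to an irreducible non-orientable $3$-manifold with incompressible (possibly empty) boundary, $M$ must be a homotopy $\RR\textup{P}^2\times S^1$. Such an $M$ has $H_1(M)\cong\Z\oplus\Z/2$, so it admits at least two distinct $2$-fold covers, one orientable and one not, which cannot be homeomorphic; thus $M$ is not exceptional. If $\widetilde{M}\cong T^3$, then $M$ is finitely covered by $T^3$, which by \cite[Theorem~2.1]{Meeks-Scott} implies that $M$ is Euclidean, and Proposition~\ref{proposition_euccase} finishes the case since $\pi_1(M)$ is a non-free-abelian crystallographic group (it contains an orientation-reversing element). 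Finally, if $\widetilde{M}\cong T^2\times I$, a doubling argument together with the classification of $3$-manifolds with solvable fundamental group shows that $M$ is either the Klein bottle times an interval or the Möbius band times $S^1$; in either case $H_1(M;\Z/2)$ has rank at least two, so $M$ admits two distinct $2$-fold covers (only one of which is orientable), hence $M$ is not exceptional.

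The main obstacle I anticipate is the case analysis for $\widetilde{M}\in\{S^1\times S^2, S^1\times D^2\}$: one needs to be careful that the Swarup lemma applies, which requires incompressibility of $\partial M$ and irreducibility of $M$. The boundary compressibility reduction is therefore the step that most deserves attention, but everything else is a direct appeal to previously established propositions in the paper combined with standard $3$-manifold facts.
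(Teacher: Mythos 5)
Your proposal is correct and follows essentially the same route as the paper: isolate $S^1\widetilde{\times}S^2$, reduce to the irreducible case, pass to the orientation double cover via Lemma~\ref{lem:nonexceptional-cover}, and then run the same case analysis ($\pi_1$ free via Swarup giving a homotopy $\RR\textup{P}^2\times S^1$; $T^3$ via Meeks--Scott and Proposition~\ref{proposition_euccase}; $T^2\times I$ via doubling), including the same preliminary reduction to incompressible boundary. No substantive differences.
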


\subsection{Non-prime $3$-manifolds}
Finally, we consider non-prime $3$-manifolds. Recall that a prime decomposition for a $3$-manifold $M$ is said to be \emph{normal} if there is no $S^1\times S^2$ factor when $M$ is non-orientable. It is well-known that every  $3$-manifold has a unique normal prime decomposition (see, for instance~\cite[Theorem~3.15 and~3.21]{Hempel-book}). Note that we do not assume that the manifold is closed or orientable. 

\noindent Below we give a general structure for covering spaces of non-prime  $3$-manifolds. 

\begin{proposition}\label{prop:cover-of-connected-sum}
Let $M\cong M_1\#\cdots \#M_k$ be a normal prime decomposition for a  $3$-manifold $M$. Then if $\widehat{M}$ is a cover of $M$ with index $d$, then 
\[
\widehat{M}\cong \left(\widehat{M_{11}}\# \cdots \#\widehat{M_{1i_1}}\right)\, \# \cdots \#\, \left(\widehat{M_{k1}}\# \cdots\#\widehat{M_{ki_k}}\right)\#\, \ell S,
\]
where $S$ denotes $S^1\times S^2$ or $S^1\widetilde{\times} S^2$ whenever $\widehat{M}$ is orientable or non-orientable respectively, and $\widehat{M_j}:= \left(\widehat{M_{j1}}\sqcup \cdots \sqcup\widehat{M_{ji_1}}\right)$ is a cover of $M_j$ with index $d$. Moreover, 
\[
(k-1)\cdot d = \sum_{j=1}^k i_j -1 +\ell.
\]
In addition, any $\widehat{M}$ of this form is a cover of $M$ with index $d$. 
\end{proposition}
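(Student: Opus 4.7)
The plan is to choose a convenient geometric realization of the normal prime decomposition and track what happens under the covering map. Write
\[
M \;=\; (M_1^\circ \sqcup \cdots \sqcup M_k^\circ) \,\cup\, P,
\]
where $M_j^\circ$ is $M_j$ with a small open ball removed, $P \cong S^3 \setminus (k \text{ open balls})$ is a central piece, and the $k$ boundary spheres of $P$ are glued to the newly introduced sphere boundaries of the $M_j^\circ$. The crucial observation is that $P$ is simply connected (it deformation retracts onto a wedge of $(k-1)$ copies of $S^2$), so every connected cover of $P$ is homeomorphic to $P$ itself, and any degree $d$ cover of $P$ is a disjoint union of exactly $d$ copies of $P$.

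Applying this to a cover $p \colon \widehat{M} \to M$, the preimage $p^{-1}(P)$ is a disjoint union of $d$ copies of $P$. For each $j$, the preimage $p^{-1}(M_j^\circ)$ is a degree $d$ cover of $M_j^\circ$; write its connected components as $\widehat{M_{j1}^\circ}, \ldots, \widehat{M_{j,i_j}^\circ}$, of respective degrees $d_{j\alpha}$ summing to $d$. Since the new sphere boundary of $M_j^\circ$ is simply connected, its preimage in $\widehat{M_{j\alpha}^\circ}$ is a disjoint union of $d_{j\alpha}$ spheres; capping each by a $3$-ball yields a cover $\widehat{M_{j\alpha}} \to M_j$ of degree $d_{j\alpha}$. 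In particular $\widehat{M_j} := \bigsqcup_\alpha \widehat{M_{j\alpha}}$ is a degree $d$ cover of $M_j$, as asserted.

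The last step is to reassemble $\widehat{M}$. I would invoke the standard fact that if a connected $3$-manifold $N$ is cut along a disjoint family of embedded $2$-spheres, with resulting pieces capped off by balls to yield $N_1, \ldots, N_r$ and dual graph $\Gamma$, then
\[
N \;\cong\; N_1 \# \cdots \# N_r \#\, b_1(\Gamma) \cdot S,
\]
where $S = S^1 \times S^2$ or $S^1 \widetilde{\times} S^2$ according to whether $N$ is orientable or not; in the non-orientable case the identity $(S^1 \times S^2) \# N' \cong (S^1 \widetilde{\times} S^2) \# N'$ valid for non-orientable $N'$ lets us normalize all extra summands to $S^1 \widetilde{\times} S^2$, in line with the normal prime decomposition. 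In our situation, $\Gamma$ is a connected bipartite graph on $d + \sum_j i_j$ vertices joined by $dk$ edges, so
\[
\ell := b_1(\Gamma) = dk - \Bigl(d + \sum_{j=1}^k i_j\Bigr) + 1 = (k-1)d - \sum_{j=1}^k i_j + 1,
\]
which rearranges to the stated identity $(k-1)d = \sum_j i_j - 1 + \ell$. Since each central $P$-piece caps to $S^3$, which is neutral for $\#$, the decomposition in the proposition follows. The converse direction is obtained by running the construction backward: any compatible tuple of covers $\widehat{M_{j\alpha}} \to M_j$ with $\sum_\alpha d_{j\alpha} = d$, together with any connected bipartite graph $\Gamma$ on $d + \sum_j i_j$ vertices with $dk$ edges (and the correct vertex degrees), prescribes a gluing of $d$ copies of $P$ to the $\widehat{M_{j\alpha}^\circ}$ whose natural projection to $M$ is a degree $d$ covering map. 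The main obstacle I foresee is establishing the connected-sum-from-spheres lemma with the correct orientation bookkeeping: the orientable version is classical and essentially contained in the proof of the Kneser--Milnor theorem, but in the non-orientable case one must verify carefully that all extra summands can indeed be normalized to $S^1 \widetilde{\times} S^2$.
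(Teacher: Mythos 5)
Your proof is correct and takes essentially the same approach as the paper's: decompose $M$ into punctured prime pieces, lift, cap off the lifted spheres to obtain the covers $\widehat{M_{j\alpha}}$, and reassemble, with the extra $S$ summands accounted for by sphere identifications within a single component (equivalently, by $b_1$ of the dual graph). The only cosmetic differences are your use of a central punctured $S^3$ piece $P$ and the explicit $b_1(\Gamma)$ computation, where the paper glues the punctured $M_j$'s to each other directly and asserts that the count ``follows from the construction.''
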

Since any cover of a prime $3$-manifold is prime, the above gives a prime decomposition of $\widehat{M}$ when we ignore $S^3$ summands. 

\begin{proof}Let $p\colon \widehat{M}\to M$ be the cover. Write $M$ as $M_1\setminus B_1 \cup \cdots \cup M_k\setminus B_k$ where each $B_j\subset M_j$ is an open ball. Then each  restricted map 
\[
p\vert_{p^{-1}(M_j\setminus B_j)}\colon p^{-1}(M_j\setminus B_j)\rightarrow M_j\setminus B_j
\]
is a covering map. Gluing balls along the lifts of the connected sum spheres gives rise to the covers $\widehat{M_j}$. 

It is clear that the cover $\widehat{M}$ is built from the collection $\bigsqcup \widehat{M_j}$ by identifying spheres in pairs, arising as lifts of the connected sum spheres. When the sphere pairs are in distinct connected components, we obtain a connected sum. When they lie in the same connected component, we obtain an $S$ summand. Note that $S$ can be chosen to be $S^1\widetilde{\times} S^2$ whenever $\widehat{M}$ is non-orientable since $N\# S^1\times S^2 \cong N\# S^1\widetilde{\times} S^2$ whenever $N$ is a non-orientable $3$-manifold. The relationship between $k$, $d$, $\ell$, and $i_j$ follows from the construction. 

When $M$ is oriented, each $\widehat{M_j}$ inherits an orientation and it is easy to see that $\widehat{M}$ is a connected sum of oriented manifolds.

For $\widehat{M}$ of the given form, an index $d$ covering map $\widehat{M}\to M$ can be constructed by gluing together the individual covering maps. 
\end{proof}

\noindent For non-prime $3$-manifolds, we have the following result. 

\begin{proposition}\label{prop:nonprime}
Let $M$ be a non-prime $3$-manifold with empty or toroidal boundary. Then $M$ is exceptional if and only if it is homeomorphic to $k\cdot S^1\times S^2$, for some $k\geq 2$.
\end{proposition}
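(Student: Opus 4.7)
The plan splits into the ``if'' and ``only if'' directions of the statement.

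For the ``if'' direction, suppose $M \cong k \cdot S^1 \times S^2$ with $k \geq 2$. I would apply Proposition~\ref{prop:cover-of-connected-sum} directly: every connected cover of $S^1 \times S^2$ is again $S^1 \times S^2$ (since $\pi_1(S^1 \times S^2) = \ZZ$ has a unique subgroup of each index), and any cover of $M$ is orientable, so $S = S^1 \times S^2$. The constraint $\sum_j i_j + \ell = (k-1)d + 1$ then forces every degree-$d$ connected cover to equal $((k-1)d+1) \cdot S^1 \times S^2$, a quantity depending only on $k$ and $d$. So $M$ is exceptional.

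For the converse, let $M = M_1 \# \cdots \# M_k$ with $k \geq 2$ be the normal prime decomposition of a non-prime $M$ not of the form $j \cdot S^1 \times S^2$. My strategy is to exhibit, for some degree $d$, two connected degree-$d$ covers of $M$ with distinct normal prime decompositions, which by uniqueness forces non-homeomorphism. If some $M_i$ is non-exceptional, I pick two non-homeomorphic connected degree-$d$ covers $\widehat{M_i}, \widetilde{M_i}$ of $M_i$ and combine each with the trivial cover (namely $d$ disjoint copies) at every other summand; Proposition~\ref{prop:cover-of-connected-sum} realises these as two degree-$d$ covers of $M$ with $\ell = 0$, whose prime decompositions differ precisely by having $\widehat{M_i}$ versus $\widetilde{M_i}$ as a summand.

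If every $M_i$ is exceptional, then (since $M \not\cong j \cdot S^1 \times S^2$) some relabelled $M_1 \neq S^1 \times S^2$, and because $M_1 \neq S^3$ the group $\pi_1(M_1)$ is nontrivial. Either directly when $\pi_1(M_1)$ is finite or by residual finiteness of $3$-manifold groups (Theorem~\ref{theorem_resfin}) in the infinite case, I obtain a degree $d \geq 2$ at which $M_1$ has a connected cover $\widehat{M_1}$. I then compare two covers: the ``concentrated'' one using $\widehat{M_1}$ at $M_1$ and the trivial cover at each other $M_j$ (giving $\widehat{M_1} \# d M_2 \# \cdots \# d M_k$ with $\ell = 0$), and a ``distributed'' one using connected covers on every summand (giving $\widehat{M_1} \# \cdots \# \widehat{M_k} \# (k-1)(d-1) \cdot S$, with $\ell = (k-1)(d-1)$). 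The distributed cover requires each $\pi_1(M_j)$ to have an index-$d$ subgroup, which I arrange by choosing $d$ as a common multiple of suitable indices across the summands. A direct multiplicity comparison then shows these two prime decompositions differ generically (for instance, the multiplicity of $M_1$ in the first is at most $1$, while in the second it is $\geq d \geq 2$, once summands matching $M_1$ have been accounted for).

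The main obstacle is a handful of residual cases where the multiplicities coincide after the non-orientable normalisation $N \# S^1 \times S^2 \cong N \# S^1 \widetilde{\times} S^2$ (for $N$ non-orientable). The principal such case is $M \cong k \cdot S^1 \widetilde{\times} S^2$. Here I pick $d = 2$ and exploit orientability: the distributed cover is exactly the orientation double cover, hence orientable and equal to $(2k-1) \cdot S^1 \times S^2$, while the concentrated cover is non-orientable and equal to $(2k-1) \cdot S^1 \widetilde{\times} S^2$, so the two are distinguished by orientability. Analogous ad hoc arguments---using orientability, $b_1$, or the number of boundary tori in bounded cases like $M \cong k \cdot (S^1 \times D^2)$---dispose of every remaining residual case and complete the proof.
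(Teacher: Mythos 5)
Your ``if'' direction and your treatment of the case where some prime summand is non-exceptional coincide with the paper's argument. The gap lies in the remaining case, where every summand is exceptional, and it is twofold. First, your ``distributed'' cover $\widehat{M_1}\#\cdots\#\widehat{M_k}\#(k-1)(d-1)\cdot S$ needs a single degree $d\geq 2$ at which \emph{every} $\pi_1(M_j)$ has an index-$d$ subgroup, and your proposed fix---taking $d$ to be a common multiple of attainable indices---rests on the false premise that a group with an index-$a$ subgroup has an index-$ab$ subgroup: $\ZZ/2$ has no subgroup of index $6$. For $M=\RR\textup{P}^3\#L(3,1)$ the only degree admissible for both summands is $d=1$, so your distributed cover does not exist. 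Second, even when both covers exist they can coincide outside the residual cases you list (which all come from the non-orientable normalisation): if $M=M_1\#(k-1)\cdot(S^1\times S^2)$ with $M_1$ irreducible, e.g.\ $\RR\textup{P}^3\#(S^1\times S^2)$, the concentrated cover is $\widehat{M_1}\#(k-1)d\cdot(S^1\times S^2)$ while the distributed cover is $\widehat{M_1}\#(k-1)\cdot(S^1\times S^2)\#(k-1)(d-1)\cdot(S^1\times S^2)$, i.e.\ the same manifold, so nothing is distinguished. (Your multiplicity count---$M_1$ appearing at most once in the first cover and at least $d$ times in the second---is also not what these decompositions give.)

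The paper avoids both problems by never asking for connected covers of all summands at a common degree: it takes a connected cover $\widehat{M_1}$ of degree $d_1>1$ of one summand $M_1\not\cong S^1\times S^2$ and a connected cover $\widehat{M_k}$ of degree $d_k\geq d_1$ of one other summand, and compares the two degree-$d_k$ covers $d_k(M_1\#\cdots\#M_{k-1})\#\widehat{M_k}$ and $\bigl(\widehat{M_1}\#(d_k-d_1)M_1\bigr)\#(d_1-1)(S^1\times S^2)\#d_k(M_2\#\cdots\#M_{k-1})\#\widehat{M_k}$. Since the portions covering $M_2,\ldots,M_k$ are literally identical, the multisets of prime summands differ in the number of copies of $M_1$ by at least $d_1-1\geq 1$, which disposes of $\RR\textup{P}^3\#(S^1\times S^2)$ and $\RR\textup{P}^3\#L(3,1)$ alike; the non-orientable case is then reduced to the orientable one via the orientation double cover rather than handled by separate ad hoc arguments. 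To repair your scheme you would have to allow the connected covers on different summands to have different degrees and pad with trivial copies and $S^1\times S^2$ summands, at which point you essentially recover the paper's construction; as written, the ``only if'' direction is incomplete.
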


\begin{proof}
First, we show that $k\cdot S^1\times S^2$ is exceptional. Note that the only cover of $S^1\times S^2$ is itself. Then, we see immediately from Proposition~\ref{prop:cover-of-connected-sum} that any degree $d$ cover of $k\cdot S^1\times S^2$ is homeomorphic to $((k-1)d +1)\cdot S^1\times S^2$. 

Next, we show that any manifold which is not of the form $k\cdot S^1\times S^2$ is not exceptional. Let $M\cong M_1\# \cdots \# M_k$ be a normal prime decomposition for $M$. By hypothesis, $k\geq 2$.

As a preliminary step, we observe that if $M$ has a single prime summand which is not exceptional, it is itself not exceptional. Since both $S^2$-bundles over $S^1$ are exceptional, such a prime summand must be irreducible. Without loss of generality, assume that $M_1$ is irreducible and not exceptional. Then, there exist non-homeomorphic covers $\widehat{M_1}$ and $\widetilde{M_1}$ of $M_1$, both with index $d$, for some $1<d<\infty$. Construct the covers $\widehat{M}\cong \widehat{M_1}\# d(M_2\# \cdots \#M_k)$ and  $\widetilde{M}\cong \widetilde{M_1}\# d(M_2\# \cdots \#M_k)$ of $M$. Note that both have index $d$. Since $\widehat{M_1}$ and $\widetilde{M_1}$ are both irreducible, they appear in the prime decomposition of $\widehat{M}$ and $\widetilde{M}$ respectively. By the uniqueness of normal prime decompositions, $\widehat{M}$ and $\widetilde {M}$ are not homeomorphic. 

Thus, we only need to consider the case where each $M_i$ is itself exceptional. First we consider the case where $M$ is orientable. Since $M$ is not of the form $k\cdot S^1\times S^2$, we can assume, without loss of generality, that $M_1$ be an exceptional manifold other than $S^1\times S^2$, that there exists a cover $\widehat{M_1}\to M_1$ of index $d_1$ and a cover $\widehat{M_k}\to M_k$ of index $d_k$, such that $d_1\leq d_k$. 

We now build two covers of $M$ with index $d_k$ as follows. Let 
\[
\widehat{M}= d_k(M_1\# \cdots \# M_{k-1}) \# \widehat{M_k},
\]
and let 
\[
\widetilde{M}= \left(\widehat{M_1}\# (d_k-d_1)M_1\right) \# (d_1-1)S^2\times S^1 \# d_k(M_2\# \cdots M_{k-1})\# \widehat{M_k}.
\]
Suppose that $\widehat{M}\cong \widetilde{M}$. Then we see that $M_1\cong S^1\times S^2$, which is a contradiction.

Next, consider the case when $M$ is non-orientable. Suppose first that there is at least one irreducible prime summand in the given normal prime decomposition of $M$. Without loss of generality, we can assume that this is $M_1$. Since no non-orientable irreducible $3$-manifold is exceptional, we see that $M_1$ is orientable. Let $N$ denote the manifold $M_2\# \cdots \# M_k$. Since $M\cong M_1\# N$ is non-orientable, we see that $N$ is non-orientable. Let $\widehat{N}$ be the orientable double cover of $N$ and construct the orientable double cover $2M_1\#\widehat{N}$ of $M$. Since $M_1$ is irreducible, it is in particular not $S^1\times S^2$. By our argument in the previous paragraph, $2M_1\#\widehat{N}$ has two non-homeomorphic covers of the same index, showing that $M$ is not exceptional. 

It only remains to consider the case where $M$ is non-orientable but has no irreducible prime summands. Since we have a normal prime decomposition, this implies $M$ is of the form $k\cdot S^1\widetilde{\times} S^2$ where $k\geq 2$. Note that in this case $H_1(M)\cong \mathbb{Z}^k$ where $k\geq 2$. Then, we see that there are $2^k-1$ connected double covers, only one of which is orientable, which completes the proof. 
\end{proof}

\noindent We observe that we have established the following proposition. 
\begin{proposition}\label{prop:nonorientable}
The only non-orientable exceptional $3$-manifold with empty or toroidal boundary is the twisted $S^2$-bundle over $S^1$. 
\end{proposition}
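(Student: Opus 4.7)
The plan is to derive this as a direct corollary of the two preceding propositions. Any non-orientable compact $3$-manifold with empty or toroidal boundary is either prime or non-prime, so I would split into these two cases and apply the classification already done.

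First I would handle the prime case. By Proposition~\ref{prop:non-orientable-prime}, a prime non-orientable $3$-manifold with empty or toroidal boundary is exceptional if and only if it is homeomorphic to $S^1\widetilde{\times} S^2$. This immediately gives the $S^1\widetilde{\times} S^2$ as the sole exceptional example among prime non-orientable manifolds.

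Next I would handle the non-prime case. By Proposition~\ref{prop:nonprime}, a non-prime $3$-manifold with empty or toroidal boundary is exceptional if and only if it is homeomorphic to $k \cdot S^1 \times S^2$ for some $k \geq 2$. The key observation is that every such manifold $k \cdot S^1 \times S^2$ is orientable, since $S^1 \times S^2$ is orientable and the connected sum of orientable $3$-manifolds is orientable. Therefore no non-prime non-orientable $3$-manifold with empty or toroidal boundary is exceptional.

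Combining the two cases, the only non-orientable exceptional $3$-manifold with empty or toroidal boundary is $S^1\widetilde{\times} S^2$, as required. There is no real obstacle here; the proposition is essentially a bookkeeping consequence of the two previous results, and the only subtlety worth noting is the orientability of the manifolds $k \cdot S^1 \times S^2$, which rules out the non-prime branch entirely.
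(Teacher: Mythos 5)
Your argument is exactly how the paper obtains this proposition: it is stated there as an immediate consequence of Proposition~\ref{prop:non-orientable-prime} (prime case) and Proposition~\ref{prop:nonprime} (non-prime case, where all exceptional manifolds $k\cdot S^1\times S^2$ are orientable). The proposal is correct and matches the paper's reasoning.
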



\bibliographystyle{alpha}
\bibliography{bib}

\end{document}